\DeclareMathOperator{\PSL}{\mathrm{PSL}}
\begin{document}

\newtheorem{theorem}{Theorem}[subsection]
\newtheorem{lemma}[theorem]{Lemma}
\newtheorem{corollary}[theorem]{Corollary}
\newtheorem{conjecture}[theorem]{Conjecture}
\newtheorem{proposition}[theorem]{Proposition}
\newtheorem{question}[theorem]{Question}
\newtheorem{problem}[theorem]{Problem}
\newtheorem*{main_surface_theorem}{Surfaces in Random Groups~\ref{theorem:surface_random_group}}
\newtheorem*{thin_fatgraph_theorem}{Thin Fatgraph Theorem~\ref{theorem:thin_fatgraph}}
\newtheorem*{one_relator_theorem}{Random One Relator Theorem~\ref{theorem:surface_one_relator}}
\newtheorem*{surface_subgroup_question}{Surface Subgroup Question}
\newtheorem*{claim}{Claim}
\newtheorem*{criterion}{Criterion}
\theoremstyle{definition}
\newtheorem{definition}[theorem]{Definition}
\newtheorem{construction}[theorem]{Construction}
\newtheorem{notation}[theorem]{Notation}
\newtheorem{convention}[theorem]{Convention}
\newtheorem*{warning}{Warning}

\theoremstyle{remark}
\newtheorem{remark}[theorem]{Remark}
\newtheorem{example}[theorem]{Example}
\newtheorem*{case}{Case}

\def\id{\text{id}}
\def\Id{\text{Id}}
\def\1{{\bf{1}}}
\def\p{{\mathfrak{p}}}
\def\H{\mathbb H}
\def\Z{\mathbb Z}
\def\R{\mathbb R}
\def\N{\mathbb N}
\def\C{\mathbb C}
\def\F{\mathbb F}
\def\P{\mathbb P}
\def\Q{\mathbb Q}
\def\E{{\mathcal E}}
\def\D{{\mathcal D}}
\def\I{{\mathcal I}}

\def\CAT{\textnormal{CAT}}

\def\tra{\textnormal{tr}}
\def\length{\textnormal{length}}

\newcommand{\marginal}[1]{\marginpar{\tiny #1}}

\title{Random groups contain surface subgroups}
\author{Danny Calegari}
\address{Department of Mathematics \\ University of Chicago \\
Chicago, Illinois, 60637}
\email{dannyc@math.uchicago.edu}
\author{Alden Walker}
\address{Department of Mathematics \\ University of Chicago \\
Chicago, Illinois, 60637}
\email{akwalker@math.uchicago.edu}

\date{\today}

\begin{abstract}
A random group contains many quasiconvex surface subgroups. 
\end{abstract}

\maketitle

\section{Introduction}

Gromov famously asked the following:
\begin{surface_subgroup_question}
Let $G$ be a one-ended hyperbolic group. Does $G$ contain a subgroup isomorphic 
to the fundamental group of a closed surface with $\chi<0$?
\end{surface_subgroup_question}
Beyond its intrinsic appeal, and its obvious connections to the Virtual Haken Conjecture
in 3-manifold topology
(now a theorem of Agol \cite{Agol_VHC}), one reason 
Gromov was interested in this question was the hope that such surface subgroups could be used as 
essential structural components of hyperbolic groups \cite{Gromov_communication}. Our interest in
this question is stimulated by a belief that surface groups (not necessarily closed) 
can act as a sort of ``bridge'' between hyperbolic geometry and symplectic geometry
(through their connection to causal structures, quasimorphisms, stable commutator length, etc.).

Despite receiving considerable attention the Surface Subgroup Question is wide open in general, 
although in the specific case of hyperbolic 3-manifold groups it
was positively resolved by Kahn--Markovic \cite{Kahn_Markovic}. 
The main results of our paper may be summarized by saying that we show that Gromov's
question has a positive answer for {\em most} (hyperbolic) groups. In fact, the 
``executive summary'' says that
\begin{enumerate}
\item{{\em most groups} contain (many) surface subgroups;}
\item{these surface subgroups are {\em quasiconvex} --- i.e.\/ their intrinsic and
extrinsic geometry is uniformly comparable on large scales; and}
\item{these surface subgroups can be {\em constructed}, and their
properties {\em certified} quickly and easily.}
\end{enumerate}
Here ``most groups'' is a proxy for {\em random groups} in Gromov's few relators or
density models, to be defined presently.

\medskip

In \cite{Gromov_asymptotic}, \S~9 (also see \cite{Ollivier}), Gromov introduced the
notion of a {\em random group}. In fact, he introduced two such models: the {\em few relators}
model and the {\em density} model. In either model one first fixes a free group $F_k$
of rank $k\ge 2$ and a free generating set $x_1,\cdots,x_k$, and adds $\ell$
random relators of some fixed length $n$. In one model $\ell$ is a constant, independent of $n$.
In the other model $\ell = (2k-1)^{Dn}$ where now $D$ is constant, independent of $n$.
Explicitly:

\begin{definition}[Few relators model]
A random $k$-generator $\ell$-relator group at length $n$ is a group defined by a
presentation
$$G:=\langle x_1,\cdots, x_k \; | \; r_1,\cdots, r_\ell \rangle$$
where the $r_i$ are chosen randomly (with the uniform distribution) and independently from the
set of all cyclically reduced cyclic words of length $n$ in the $x_i^{\pm}$.
\end{definition}

\begin{definition}[Density model]
A random $k$-generator group at density $D$ (for some $0<D<1$) 
and at length $n$ is a group defined by a
presentation
$$G:=\langle x_1,\cdots, x_k \; | \; r_1,\cdots, r_\ell \rangle$$
where $\ell = (2k-1)^{Dn}$, and where the $r_i$ are chosen randomly (with the uniform distribution)
and independently from the set of all cyclically reduced cyclic words of length $n$ in the 
$x_i^{\pm}$.
\end{definition}

Thus properly speaking, either model defines a 
{\em probability distribution} on finitely presented groups (in fact, on finite
presentations) depending on constants $k,\ell,n$ in the few relators model, or on
$k,D,n$ in the density model.

If one is interested in a particular property of finitely presented groups, then one can
compute for each $n$ the probability that a random group as above has the desired property.
If this probability goes to $1$ as $n$ goes to infinity, then one says that a random $k$-generator group
(with $\ell$ relators; or at density $D$) has the given property {\em with overwhelming
probability}.
 
Gromov showed that at any fixed density $D>1/2$ random groups are trivial or isomorphic to $\Z/2\Z$, 
whereas at density
$D<1/2$ they are infinite, hyperbolic, and two-dimensional (with overwhelming
probability), and in fact the ``random presentation'' determined as above is aspherical. 
Later, Dahmani--Guirardel--Przytycki
\cite{Dahmani_Guirardel_Przytycki} showed that random groups at any density $D<1/2$ 
are one-ended and do not split, and therefore
(by the classification of boundaries of hyperbolic 2-dimensional groups),
have a Menger sponge as a boundary.

Random groups at density $D<1/6$ are known to be cubulated (i.e.\/ are equal to
the fundamental groups of nonpositively curved compact cube complexes), 
and at density $D<1/5$ to
act cocompactly (but not necessarily properly) on a $\CAT(0)$ cube complex, by Ollivier--Wise
\cite{Ollivier_Wise}. On the other hand, groups at density $1/3 < D < 1/2$ have property $(T)$,
by Zuk \cite{Zuk} (further clarified by Kotowski--Kotowski \cite{Kotowski_Kotowski}), 
and therefore cannot act on a $\CAT(0)$ cube complex without a global fixed
point. A one-ended hyperbolic cubulated group contains a one-ended graph of free groups
(see \cite{Calegari_Wilton}, Appendix~A; this depends on work of Agol \cite{Agol_VHC}), 
and Calegari--Wilton \cite{Calegari_Wilton} show
that a {\em random} graph of free groups (i.e.\/ a graph of free groups with random
homomorphisms from edge groups to vertex groups) contains a surface subgroup. Thus one might
hope that a random group at density $D<1/6$ should contain a graph of free groups that is
``random enough'' so that the main theorem of \cite{Calegari_Wilton} can be applied, 
and one can conclude that there is a surface subgroup.

Though suggestive, there does not appear to be an easy strategy to flesh out this idea. 
Nevertheless in this paper we are able to show directly that at {\em any} density $D<1/2$ a random group 
contains a surface subgroup (in fact, many surface subgroups). 

We give three proofs of this theorem, valid at different densities, with the final proof giving
any density $D<1/2$. Theorem~\ref{theorem:surface_one_relator} is valid for one-relator
groups (informally $D=0$), Lemma~\ref{lemma:density_convexity} gives $D<2/7$, while 
our main Theorem~\ref{theorem:surface_random_group} gives $D<1/2$. Explicitly, we show:

\begin{main_surface_theorem}
A random $k$-generator group at any density $D<1/2$ and length $n$
contains a surface subgroup with 
probability $1-O(e^{-n^c})$. In fact, it contains $O(e^{n^c})$ surfaces of genus $O(n)$.
Moreover, these surfaces are quasiconvex.
\end{main_surface_theorem}

This state of affairs is summarized in Figure~\ref{density_picture}. 
A modification of the construction (see Remark~\ref{remark:homologically_essential})
shows that the surface subgroups can be taken to be homologically essential.

\begin{figure}[htpb]
\labellist
\small\hair 2pt
\pinlabel $0$ at 40 100
%\pinlabel $\frac{1}{12}$ at 106.5 100
\pinlabel $\frac{1}{6}$ at 173 100
\pinlabel $\frac{1}{5}$ at 200 100
\pinlabel $\frac{2}{7}$ at 268 100
\pinlabel $\frac{1}{3}$ at 306 100
\pinlabel $\frac{1}{2}$ at 439.5 100
\pinlabel cubulated at 560 97.5
\pinlabel acts\;on\;$\CAT(0)$ at 579 72
\pinlabel property\;$(T)$ at 570 49
\pinlabel surface\;subgroup at 585 24
\pinlabel \ref{theorem:surface_one_relator} at 41 0
\pinlabel \ref{lemma:density_convexity} at 268 0
\pinlabel \ref{theorem:surface_random_group} at 439.5 0
\endlabellist
\centering
\includegraphics[scale=0.6]{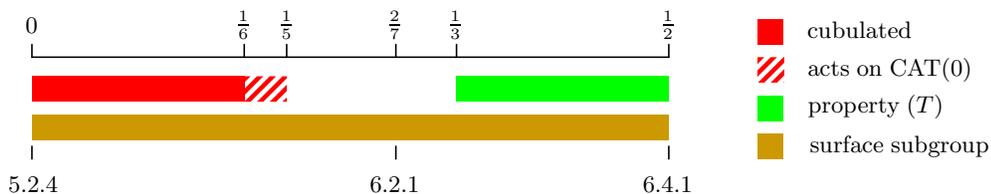}
\caption{Random groups at different densities}\label{density_picture}
\end{figure}

Along the way we prove some results of independent interest. The first of these (and the most
technically involved part of the paper) is the {\em Thin Fatgraph Theorem}, which says that
a ``sufficiently random'' homologically trivial collection of cyclic words $\Gamma$ in a free group satisfies
a strong combinatorial property: it can be realized as the oriented
boundary of a trivalent fatgraph in which
{\em every} edge is longer than some prescribed constant. This theorem is actually proved in
a {\em relative} version, where after having realized a collection of subwords $\Gamma' \subset \Gamma$
as the oriented boundary of a {\em partial trivalent fatgraph} (i.e.\/ a fatgraph with 3-valent
interior vertices and 1-valent ``boundary'' vertices), the remainder $\Gamma'':=\Gamma - \Gamma'$ can be
thought of as a collection of {\em tagged} cyclic words, where the tags indicate the boundary data
(i.e.\/ the way in which $\Gamma''$ lies inside $\Gamma$). Precise definitions of these terms are
given in \S~\ref{subsection:tags}.

\begin{thin_fatgraph_theorem}
For all $L>0$, for any $T\gg L$ and any $0<\epsilon \ll 1/T$, there is an $N$ depending only on $L$
so that if $\Gamma$ is a homologically trivial collection of tagged loops such that for each loop
$\gamma$ in $\Gamma$:
\begin{enumerate}
\item{no two tags in $\gamma$ are closer than $4L$;}
\item{the density of the tags in $\gamma$ is of order $o(\epsilon)$;}
\item{$\gamma$ is $(T,\epsilon)$-pseudorandom;}
\end{enumerate}
then there exists a trivalent fatgraph $Y$ with every edge of length at least $L$ so that
$\partial S(Y)$ is equal to $N$ disjoint copies of $\Gamma$.
\end{thin_fatgraph_theorem}
If the rank of the group is $2$, we can take $N=1$ above; otherwise we can take $N=20L$.

The Thin Fatgraph Theorem strengthens one of the main technical theorems underpinning 
\cite{Calegari_Walker_LP} and \cite{Calegari_Wilton}, and can be thought of as a kind of 
$L^\infty$ theorem whose $L^1$ version (with optimal constants) is the main theorem of 
\cite{Calegari_Walker_RR}. If $r$ is a long random relator, the Thin Fatgraph Theorem lets
us build a surface whose boundary consists of a small number
of copies of $r$ and $r^{-1}$. By plugging in a disk along each boundary component, we obtain
a closed surface in the one-relator group $\langle F_k\; | \; r \rangle$. If the surface
is built correctly, it can be shown to be $\pi_1$-injective, with high probability.
This is one of the most subtle parts of the construction, and ensuring that the surfaces
we build are $\pi_1$-injective at this step depends on the existence of a so-called 
{\em Bead Decomposition} for $r$; see Lemma~\ref{lemma:bead_decomposition}. Thus
we obtain the Random One Relator Theorem, whose statement is as follows:
\begin{one_relator_theorem}
Fix a free group $F_k$ and let $r$ be a random cyclically reduced word of length $n$. Then
$G:=\langle F_k\; | \; r \rangle$ contains a 
surface subgroup with probability $1-O(e^{-n^c})$.
\end{one_relator_theorem}

The surfaces stay injective as more and more relators are added (in fact, these are the surfaces
referred to in the main theorem) so this shows that random groups in the few relators model
also contain surface subgroups for any fixed $\ell>0$, with high probability.

There is an interesting tension here: the fewer relators, the harder it is to build a surface group,
but the easier it is to show that it is injective. This suggests looking for surface subgroups in
an arbitrary one-ended hyperbolic group at a very specific ``intermediate'' scale, perhaps
at the scale $O(\delta)$ where $\delta$ is the constant of hyperbolicity with respect to an 
``efficient'' (e.g. Dehn) presentation.

\medskip

We conclude this introduction with three remarks.

First: it is worth spelling out some similarities and differences between our work 
and the breakthrough work of Kahn--Markovic \cite{Kahn_Markovic}. The Kahn--Markovic argument 
depends crucially on the structure of hyperbolic 3-manifold 
groups as lattices in the semisimple Lie group
$\PSL(2,\C)$. By contrast, in this paper we are concerned with much more combinatorial classes of hyperbolic
groups. Nevertheless, one common point of contact
is the use of probability theory to {\em construct}
surfaces, and the use of (hyperbolic) geometry to {\em certify} them as injective. In particular, because
our surfaces are certified as injective by local methods, they end up being quasiconvex. It is an
interesting question to identify the class of hyperbolic groups which contain
non-quasiconvex (yet injective) surface subgroups (hyperbolic 3-manifold groups are now known
to contain such groups since they are virtually fibered, again by Agol \cite{Agol_VHC}).

Second: a large part of the difficulty in the proof of the Thin Fatgraph Theorem
arises because we insist on building {\em oriented} surfaces. The advantage of this is that
when our random groups $G$ have nontrivial $H_2$ (which happens whenever $D>0$ in the density model) the
injective surfaces we construct can be chosen to be {\em homologically essential} in $G$.
On the other hand, for the reader who is interested only in the existence of closed
surface subgroups in $G$, the proof of the Thin Fatgraph Theorem can be considerably simplified. We
explain this at the end of \S~\ref{section:good_pants}.

Third: the reader who is not already invested in the theory of random groups might complain
that the few relators and density models seem rather special, insofar as the random relators are sampled
from an especially simple probability distribution (i.e.\/ the uniform distribution). One may consider
a variation on the construction of a random group by fixing $F_k$ and a stationary Markov process
of entropy $\log(\lambda)>0$ which successively generates the letters of reduced words in $F_k$, 
and define a random group
at density $D$ and length $n$ to be obtained by adding $\lambda^{nD}$ words of length $n$ as
relators, each generated independently by the Markov process. Providing the Markov process is
ergodic and has {\em full support} --- i.e.\/ providing that every finite reduced word $\sigma$
has a positive probability of being generated --- a random group in this 
model will contain surface groups with overwhelming
probability for any $D<1/2$. If we further assume that for a long random string generated by the Markov
process and for any $\sigma$ as above the expected number of copies of $\sigma$ and of 
$\sigma^{-1}$ are equal, then the surface subgroups can be chosen to be homologically essential. 

\subsection{Acknowledgments}

We would like to thank Misha Gromov, John Mackay, Yann Ollivier, Piotr Przytycki, Henry Wilton
and the anonymous referee. 
We also would like to acknowledge the use of Colin Rourke's {\tt pinlabel} program, and
Nathan Dunfield's {\tt labelpin} program to help add the (numerous!) labels to the figures.
Danny Calegari was supported by NSF grant DMS 1005246, 
and Alden Walker was supported by NSF grant DMS 1203888.

\section{Background}

In this section we describe some of the standard combinatorial language that we use in
the remainder of the paper. Most important is the notion of {\em foldedness} for a map
between graphs, as developed by Stallings \cite{Stallings}. We also recall some standard
elements of the theory of small cancellation, which it is convenient to cite at certain
points in our argument, though ultimately we depend on a more flexible version of small
cancellation theory developed by Ollivier \cite{Ollivier_small} specifically for application to
random groups (his results are summarized in \S~\ref{subsection:Ollivier_results}).

\subsection{Fatgraphs and foldedness}

\begin{definition}
Let $X$ and $Y$ be graphs. A map $f:Y \to X$ is {\em simplicial} if it
takes edges (linearly) to edges. It is {\em folded} if it is locally
injective.
\end{definition}

A folded map between graphs is injective on $\pi_1$. The terminology
of foldedness, and its first effective use as a tool in group theory, is
due to Stallings \cite{Stallings}.

\begin{definition}
A {\em fatgraph} is a graph $Y$ together with a choice of cyclic order
on the edges incident to each vertex. A fatgraph admits a canonical
{\em fattening} to a surface $S(Y)$ in which it sits as a spine (so that $S(Y)$
deformation retracts to $Y$) in such a way that the cyclic order of edges
coming from the fatgraph structure agrees with the cyclic order in which
the edges appear in $S(Y)$. A {\em folded fatgraph over $X$} is a
fatgraph $Y$ together with a folded map $f:Y \to X$.
\end{definition}

The case of most interest to us will be that $X$ is a rose associated to
a free generating set for a (finitely generated) free group $F$.

A folded fatgraph $f:Y \to X$ induces a $\pi_1$ injective map $S(Y) \to X$.
The deformation retraction $S(Y) \to Y$ induces an immersion $\partial S(Y) \to Y$,
and we may therefore think of $\partial S(Y)$ as a union of simplicial
loops. Under $f$ these loops map to immersed loops in $X$, corresponding
to conjugacy classes in $\pi_1(X)$.

Conversely, given a homologically trivial
collection of conjugacy classes $\Gamma$ in $\pi_1(X)$ represented
(uniquely) by immersed oriented loops in $X$, we may ask whether there is
a folded fatgraph $Y$ over $X$ so that $\partial S(Y)$ represents $\Gamma$
(by abuse of notation, we write $\partial S(Y) = \Gamma$). Informally,
we say that such a $\Gamma$ {\em bounds a folded fatgraph}.

\subsection{Small cancellation}

\begin{definition}
Let $G$ have a presentation 
$$G:=\langle x_1,\cdots, x_n\; | \; r_1,\cdots, r_s\rangle$$
where the $r_j$ are cyclically reduced words in the generators $x_i^{\pm}$.
A {\em piece} is a subword that appears in two different ways in the
relations or their inverses. A presentation satisfies the condition
$C'(\lambda)$ for some $\lambda$ if every piece $\sigma$ in some
$r_i$ satisfies $|\sigma|/|r_i| < \lambda$.
\end{definition}

\begin{remark}
Some authors use the notation $C'(\lambda)$ to indicate the weaker
inequality $|\sigma|/|r_i| \le \lambda$. This distinction will be irrelevant for us.
\end{remark}

Associated to a presentation there is a connected
2-complex $K$ with one vertex, one edge for each generator, and one
disk for each relation. The 1-skeleton $X$ for $K$ is a rose for the free
group on the generators. As is well-known, a group satisfying 
$C'(1/6)$ is hyperbolic, and (if no relator is a proper power)
the 2-complex $K$ is aspherical (so that the
group is of cohomological dimension at most 2).

\begin{definition}
Fix a group $G$ with a presentation complex $K$ and 1-skeleton $X$ as
above. A {\em surface over the presentation} is an oriented surface $S$ with the
structure of a cell complex together with a cellular map to $K$ which
is an isomorphism on each cell. The 1-skeleton $Y$ of the CW complex structure
on $S$ inherits the structure of a fatgraph from $S$ and its orientation,
and this fatgraph comes together with a map to $X$.
We say $S$ has a {\em folded spine} if $Y \to X$ is a folded fatgraph.
\end{definition}

If $G$ is a small cancellation group, a surface with a folded spine can
be certified as $\pi_1$-injective by the following combinatorial
condition.

\begin{definition}\label{definition:alpha_convex}
Let $G$ be a group with a fixed presentation, and let
$S$ be an oriented surface over the presentation with a folded spine $Y$.
We say $S$ is {\em $\alpha$-convex} (for some $\alpha>0$) with respect to the presentation
if for every immersed path $\gamma$ in $Y$ which is a subword in some
relation $r_i^{\pm}$ with $|\gamma|/|r_i|\ge \alpha$, we actually
have that $\gamma$ is contained in $\partial S(Y)$ (i.e.\/ it is in the boundary of a disk of
$S$).
\end{definition}

\begin{lemma}[Injective surface]\label{lemma:injective_surface}
Let $G$ be a group with a presentation satisfying $C'(1/6)$ and such that no
relator is a proper power; and let $S$
be an oriented surface over the presentation with a folded spine $Y$. 
If $S$ is $1/2$-convex then it is $\pi_1$-injective.
\end{lemma}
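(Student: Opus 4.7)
The plan is to run the standard minimal-disk-diagram argument using Greendlinger's Lemma. Suppose for contradiction that $S$ is not $\pi_1$-injective; then there is an immersed edge loop $\gamma$ in the spine $Y$ that represents an essential class in $\pi_1(S)$ but is null-homotopic in $K$. Since $Y$ is folded, $\pi_1(Y)$ injects into $F_k = \pi_1(X)$, so $\gamma$ is nontrivial in $F_k$. Fix a reduced van Kampen diagram $D \to K$ with $\partial D = \gamma$ of minimal area; because $\gamma$ is nontrivial in $F_k$, the diagram $D$ has at least one $2$-cell.

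Next apply Greendlinger's Lemma. The $C'(1/6)$ hypothesis guarantees that $D$ has a \emph{shell}: a $2$-cell $R$ whose intersection with $\partial D$ is a connected arc $a$ with $|a|/|\partial R| > 1/2$. Then $a$ is an immersed path in $Y$ that is a subword of a cyclic relator $r$ with $|a|/|r| > 1/2$. By the $1/2$-convexity of $S$, the arc $a$ lifts into $\partial S(Y)$ and is therefore a subarc of the boundary of a $2$-cell $R'$ of $S$. Since $|a|$ exceeds the piece threshold $|r|/6$, the two occurrences of $a$ inside $\partial R$ and $\partial R'$ (both cyclic copies of $r^{\pm 1}$) must agree as cyclic subwords; the no-proper-power hypothesis then forces a unique identification of $\partial R$ with $\partial R'$ as labelled cycles. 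Writing $\partial R = a d$ and $\partial R' = a d'$ with respect to this identification, $d$ and $d'$ label the same path in $X$, and by foldedness of $Y$ the path $d'$ in $Y$ is the unique lift of this common label based at the endpoint of $a$.

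The contradiction is now immediate. Writing $\gamma = a c$ in $Y$, the loop $\gamma' := (d')^{-1} c$ is obtained from $\gamma$ by pushing across the $2$-cell $R' \subset S$, so it is freely homotopic to $\gamma$ inside $S$ and remains essential. On the other hand, $D' := D \setminus \mathrm{int}(R)$ is a reduced van Kampen diagram with $\partial D' = \gamma'$ and strictly fewer $2$-cells than $D$, contradicting the minimality of $D$.

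The central delicate step is the identification of $d$ with $d'$. It relies simultaneously on the $C'(1/6)$ bound (to exclude that the overlap $a$ corresponds to two genuinely distinct occurrences inside cyclic relators, which would produce a piece longer than $|r|/6$), on the absence of proper powers (so that the labelling of a $2$-cell of $K$ by $r$ is rigid up to a canonical orientation), and on the foldedness of $Y$ (to lift $d$ uniquely back into $Y$). A routine secondary issue is that $\gamma'$ may fail to be reduced at the endpoints of $a$; if so, one reduces $\gamma'$ and collapses the corresponding cancelling boundary edges of $D'$, which only strengthens the strict inequality on face count.
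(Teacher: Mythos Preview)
Your argument is essentially the paper's: take a minimal van Kampen diagram, apply Greendlinger's Lemma to find a shell, use $1/2$-convexity to match it with a disk of $S$, push across, and contradict minimality. The extra care you take in identifying $\partial R$ with $\partial R'$ (via the piece bound and the no-proper-power hypothesis) spells out what the paper compresses into the single phrase ``$D'$ and $D$ bound the same relator in the same way.''

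There is one small slip in the logic as written: you minimize the area of $D$ only among diagrams with boundary the \emph{fixed} loop $\gamma$, but the diagram $D' = D \setminus \mathrm{int}(R)$ you produce has boundary $\gamma' \ne \gamma$, so its smaller face count does not literally contradict your stated minimality. The fix is the paper's: minimize the number of faces over all pairs $(\gamma, D)$ with $\gamma$ immersed in $Y$ and essential in $S$. Equivalently, iterate your reduction; the face count strictly decreases each time, and when it reaches zero the boundary loop is trivial in $F_k$ and hence (by foldedness) in $\pi_1(Y) = \pi_1(S)$, contradicting essentiality.
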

\begin{proof}
First we prove injectivity under the assumption that $S$ is $1/2$-convex. 
Suppose not, so that there is some essential loop in $\pi_1(S)$ which is
trivial in $G$. After a homotopy, we can assume this loop $\gamma$ is
{\em immersed} in $Y$. Since $Y$ is folded, the image of $\gamma$ in $X$
is also immersed; i.e.\/ it is represented by a cyclically reduced word
in the generators. Since by hypothesis $\gamma$ is trivial in $G$, there is
a van Kampen diagram with $\gamma$ as boundary. We may choose $\gamma$
and a diagram for which the number of faces is minimal.

The $C'(1/6)$ condition implies that there is a face $D$ in the diagram which
has at least $1/2$ of its boundary as a connected segment on $\gamma$; this is
sometimes called {\em Greedlinger's Lemma}.
Then the hypothesis implies that this segment is actually contained in the
boundary of a disk $D'$ of $S$. Since $G$ is $C'(1/6)$ it follows that 
$D'$ and $D$ bound the same relator in the same way, and we can therefore
push $\gamma$ across $D'$ to obtain a van Kampen diagram with fewer
faces and with boundary an essential loop in $S$ (homotopic to $\gamma$). 
But this contradicts the choice of van Kampen diagram, and this
contradiction shows that no such essential loop exists; i.e.\/ that
$\pi_1(S) \to G$ is injective.
\end{proof}

\begin{remark}
If $S$ is $\alpha$-convex for any fixed $\alpha < 1/2$, a similar argument shows
that $S$ is quasiconvex; since we shall prove quasiconvexity under more general
geometric hypotheses in Theorem~\ref{theorem:surface_random_group}, and since this fact is
not actually used in the paper, we do not justify this remark here.
\end{remark}

In the sequel we usually say that a map from $S$ to $K$ is {\em injective} to mean
that it is $\pi_1$-injective.

\section{Trivalent fatgraphs}

The purpose of this section is to prove the Thin Fatgraph Theorem~\ref{theorem:thin_fatgraph},
which implies that a (homologically trivial) collection of random cyclically reduced
words bounds a trivalent fatgraph with long edges (i.e.\/ in which {\em every} edge is
as long as desired).

For concreteness the theorem is stated not for random words but for (sufficiently) pseudorandom
words, and does not therefore really involve any probability theory. However the (obvious)
application in this paper is to random words, and words obtained from them by simple operations.

\subsection{Partial fatgraphs and tags}\label{subsection:tags}

We are going to build folded fatgraphs with prescribed boundary (i.e.\/
given $\Gamma$ we will build $Y$ with $\Gamma = \partial S(Y)$). In the
process of building these fatgraphs we deal with intermediate objects that we
call {\em partial fatgraphs} bounding part of $\Gamma$, and the part of $\Gamma$
that is not yet bounded by a partial fatgraph is a collection of {\em cyclic words
with tags}. This language is introduced in \cite{Calegari_Walker_LP}.

\begin{figure}[htpd]
\begin{center}
\labellist
\small
\pinlabel {$x$} at 173 105
\pinlabel {$Y$} at 128 108
\pinlabel {$u$} at 149 157
\pinlabel {$v$} at 146 16
\pinlabel {$z$} at 241 65
\pinlabel {$X$} at 247 185
\pinlabel {$w$} at 149 235
\pinlabel {$y$} at 53 183
\pinlabel {$Z$} at 54 62
\pinlabel {$x$} at 427 155
\pinlabel {$u$} at 410 172
\pinlabel {$Y$} at 391 159
\pinlabel {$v$} at 409 6
\pinlabel {$z$} at 423 102
\pinlabel {$X$} at 436 128
\pinlabel {$w$} at 408 238
\pinlabel {$y$} at 380 131
\pinlabel {$Z$} at 398 103
\pinlabel {$u$} at 668 121
\pinlabel {$v$} at 670 8
\pinlabel {$w$} at 667 240 
\endlabellist
\includegraphics[scale=0.47]{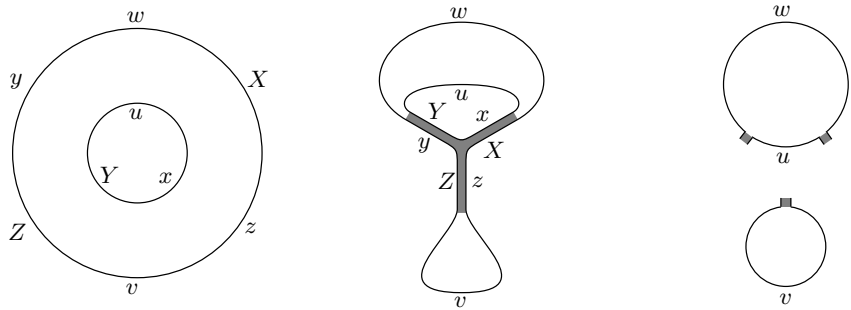}
\caption{Two cyclic words are partially paired along a partial fatgraph (the grey tripod); 
what is left is two cyclic words with three tags.}
\label{new_fat_with_tags}
\end{center}
\end{figure}

The (partial) fatgraphs will be built by taking disjoint pairs of segments in
$\Gamma$ with inverse labels (in $X$) and {\em pairing them} --- i.e.\/
associating them to opposite sides of an edge of the fatgraph. Once
all of $\Gamma$ is decomposed into such paired segments the fatgraph $Y$
will be implicitly defined.

A {\em partial fatgraph} is, abstractly, the data of a pairing of
some collection of disjoint pairs of segments in $\Gamma$. We imagine that
this partial fatgraph $Z$ has boundary $\partial S(Z)=:\Gamma'$ which is
a subset of $\Gamma$. The difference $\Gamma - \Gamma'$ is a collection
of paths whose endpoints are paired according to how they are paired in $\Gamma'$.
The result is therefore a collection of cyclic words $\Gamma''$, together with the data
of the ``germ'' of the partial fatgraph $Z$ at finitely many points. This extra
data we refer to as {\em tags}, and we call this collection $\Gamma''$ a 
collection of {\em cyclic words with tags}. 

\begin{example}
An example is illustrated in Figure~\ref{new_fat_with_tags}. Starting with two reduced
cyclic words $vzXwyZ$ and $uxY$ we pair the subwords $zX$, $xY$ and $yZ$ along the edges of
a tripod (as indicated in the figure) leaving ``tagged'' cyclic words $u\cdot w\, \cdot$ and $v\,\cdot$
as a remainder (in formulas the tags can be indicated by the punctuation character $\cdot$).
\end{example}

\subsection{Pseudorandomness}

Random (cyclically reduced) words enjoy many strong equidistribution properties, at
a large range of scales. For our purposes it is sufficient to have ``enough'' equidistribution
at a sufficiently large {\em fixed} scale. To quantify this we describe the condition of
{\em pseudorandomness}, and observe that random words are pseudorandom with high probability.

\begin{definition}\label{definition:pseudorandom}
Let $\Gamma$ be a cyclically reduced cyclic word in a free group $F_k$ with $k\ge 2$ generators.
We say $\Gamma$ is {\em $(T,\epsilon)$-pseudorandom} if the following is true: if we pick any
cyclic conjugate of $\Gamma$, and write it as a product of reduced words $w_i$ of length $T$ (and at
most one word $v$ of length $<T$) 
$$\Gamma: = w_1w_2w_3\cdots w_Nv$$
then for every reduced word $\sigma$ of length $T$ in $F_k$, there is an estimate
$$1-\epsilon \le \frac {\# \lbrace i \text{ such that } w_i = \sigma \rbrace} 
{N} \cdot (2k)(2k-1)^{T-1}\le 1+\epsilon$$
Similarly, we say that a collection of reduced words $w_i$ of length $T$ is 
{\em $\epsilon$-pseudorandom} if for every reduced word $\sigma$ of length $T$ in $F_k$ the
estimate above holds.
\end{definition}

\begin{lemma}[Random is pseudorandom]\label{lemma:pseudorandom}
Fix $T,\epsilon > 0$. Let $\Gamma$ be a random cyclically reduced word of length $n$. Then
$\Gamma$ is $(T,\epsilon)$-pseudorandom with probability $1-O(e^{-Cn})$.
\end{lemma}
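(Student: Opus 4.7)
The plan is to model the generation of a uniformly random cyclically reduced word of length $n$ as (essentially) a stationary Markov chain on the letters $x_i^{\pm}$, where the first letter is uniform over the $2k$ generators and each subsequent letter is uniform over the $2k-1$ letters that do not invert the preceding one. This Markov measure is exactly the uniform distribution on reduced words of length $n$; the cyclic closure condition (that the last letter not invert the first) has probability bounded below by a positive constant as $n\to\infty$, so conditioning on it only alters probabilities by a bounded multiplicative factor.

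First I would fix a starting offset $s\in\{0,1,\ldots,T-1\}$ — the $n$ cyclic conjugates of $\Gamma$ produce at most $T$ genuinely different decompositions into length-$T$ blocks, since conjugates differing by a multiple of $T$ give the same blocks up to renumbering — and a reduced word $\sigma$ of length $T$. Writing the chosen cyclic conjugate as $w_1w_2\cdots w_Nv$, let $X_i:=\mathbf{1}[w_i=\sigma]$. The stationary expectation of $X_i$ is exactly $\frac{1}{(2k)(2k-1)^{T-1}}$: the first letter is correct with probability $1/(2k)$ and each of the remaining $T-1$ letters is correct with probability $1/(2k-1)$. Any transient or cyclic-closure correction contributes only a $1+o(1)$ multiplicative factor, harmlessly absorbed into $\epsilon$ for $n$ sufficiently large.

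Next I would apply exponential concentration. The indicators $X_i,X_{i+1}$ are not independent (the final letter of $w_i$ constrains the first letter of $w_{i+1}$), but the sequence $(w_i)$ is a function of the underlying letter-Markov chain, which is irreducible, aperiodic, and — because $k$ and $T$ are fixed — has a spectral gap bounded away from zero. A standard Hoeffding-type inequality for functionals of ergodic Markov chains then yields
\[
\Pr\!\left[\,\Bigl|\sum_{i=1}^{N} X_i \;-\; \tfrac{N}{(2k)(2k-1)^{T-1}}\Bigr| \;>\; \tfrac{\epsilon N}{(2k)(2k-1)^{T-1}}\,\right] \;\le\; e^{-c(\epsilon,T,k)\,N}.
\]
If one prefers a bare-hands argument, one can partition the indices $i$ into even and odd families, condition on the letters in the tail of each block (which are the only coupling between adjacent blocks), and reduce to a textbook Chernoff bound for sums of bounded independent random variables.

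The final step is a union bound. The number of starting offsets is $T$ and the number of reduced words $\sigma$ of length $T$ is $(2k)(2k-1)^{T-1}$, both constants depending only on $k$ and $T$. Since $N=\lfloor n/T\rfloor=\Theta(n)$, multiplying the per-$(s,\sigma)$ failure probability by this constant polynomial factor still leaves an overall failure probability of $O(e^{-Cn})$ for some $C=C(\epsilon,T,k)>0$, which is the desired conclusion. The main technical point is cleanly handling the mild dependence between consecutive blocks and the perturbation from cyclic closure; once the Markov chain model is in hand and one invokes off-the-shelf concentration, the rest is bookkeeping.
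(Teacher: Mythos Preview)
Your proposal is correct and is exactly the approach the paper takes: the paper's entire proof is the single line ``This is immediate from the Chernoff inequality for finite Markov chains,'' and you have spelled out precisely that argument (Markov model for reduced words, per-$\sigma$ concentration, union bound over offsets and over $\sigma$). One small quibble: when $T\nmid n$ the decompositions starting at offsets $s$ and $s+T$ do \emph{not} share the same block set (the remainder $v$ shifts), so you should union bound over all $n$ starting positions rather than $T$ --- but this is still polynomial in $n$ and is absorbed into the exponential just as you say.
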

\begin{proof}
This is immediate from the Chernoff inequality for finite Markov chains 
(see \S~\ref{subsection:independence} for a precise statement of the form the of 
Chernoff inequality we use, and for references).
\end{proof}

\subsection{Thin Fatgraph Theorem}

We now come to the main result in this section, the Thin Fatgraph Theorem. This says
that any (sufficiently) pseudorandom homologically trivial 
collection of tagged loops, with sufficiently few and
well-spaced tags, bounds a trivalent fatgraph with every edge as long as desired. Note that
every trivalent graph (with reduced boundary) is automatically folded.

This theorem can be compared with \cite{Calegari_Walker_LP}, Thm.~8.9 which says that 
random homologically trivial words bound 4-valent folded fatgraphs, with high probability; and \cite{Calegari_Walker_RR},
Thm.~4.1 which says that random homologically trivial words of length $n$ bound 
(not necessarily folded) fatgraphs whose {\em average} valence is arbitrarily close to 3, and whose
{\em average} edge length is as close to $\log(n)/2\log(2k-1)$ as desired (and moreover
this quantity is sharp). It would be very interesting
to prove (or disprove) that random homologically trivial words bound (with high probability)
trivalent fatgraphs in which {\em every} edge has length $O(\log(n))$, but this seems to require
new ideas.

\begin{theorem}[Thin Fatgraph]\label{theorem:thin_fatgraph}
For all $L>0$, for any $T\gg L$ and any $0<\epsilon \ll 1/T$, there is an $N$ depending only on $L$
so that if $\Gamma$ is a homologically trivial collection of tagged loops such that for each loop
$\gamma$ in $\Gamma$:
\begin{enumerate}
\item{no two tags in $\gamma$ are closer than $4L$;}
\item{the density of the tags in $\gamma$ is of order $o(\epsilon)$;}
\item{$\gamma$ is $(T,\epsilon)$-pseudorandom;}
\end{enumerate}
then there exists a trivalent fatgraph $Y$ with every edge of length at least $L$ so that
$\partial S(Y)$ is equal to $N$ disjoint copies of $\Gamma$.
\end{theorem}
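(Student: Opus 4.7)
The plan is to construct $Y$ by an iterative pairing procedure applied to $N$ disjoint copies of the tagged word $\Gamma$. At each step we pair two subwords of length at least $L$ carrying mutually inverse labels in $F_k$; these become opposite sides of a new edge of $Y$, and their four endpoints become new tags in the remaining tagged-word data. Throughout the procedure we maintain the invariants that every edge produced has length at least $L$ and that every completed interior vertex has valence exactly three. The procedure terminates when no unpaired segments of $\Gamma$ remain.

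The first step is to extract a quantitative abundance statement from $(T,\epsilon)$-pseudorandomness. For every reduced word $\sigma$ of length $T$, the occurrences of $\sigma$ in $\Gamma$ are numerous (of order $|\Gamma|/(2k)(2k-1)^{T-1}$) and uniformly distributed. Consequently, given any ``candidate'' subword of length between $L$ and $T$ at a permitted position, many inverse matches are available elsewhere, even after excluding the (few) positions lying within $4L$ of an existing tag or already used in earlier pairings. The hypothesis $T\gg L$ is used precisely to arrange that the search-window (of length $T$) is much longer than the edge-length lower bound $L$, so that the pairing can be placed to avoid forbidden positions with room to spare.

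The inductive step is then organized so that each pairing consumes two outstanding tags and creates a trivalent vertex of $Y$ at each endpoint of the new edge: the three half-edges meeting at such a vertex are the two sides of the previously-existing edge abutting the old tag and the side of the newly-created edge. Thus the number of unresolved tags strictly decreases. At each stage three things must be verified: (i) enough matching inverse subwords remain in the supply; (ii) the new tags produced respect the $4L$ spacing and the $o(\epsilon)$ density hypotheses; and (iii) the residual tagged data remains pair-able in the sense of having vanishing signed counts of every subword. Items (i) and (ii) follow from the abundance statement together with a union bound against the forbidden positions (whose total density is controlled by the hypotheses on the tag density and by the bookkeeping of previous pairings), while (iii) is propagated inductively using the hypothesis of homological triviality, which gives the balance needed in the abelianization.

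The main obstacle, as so often in such arguments, is the endgame: as the unpaired residue shrinks, pseudorandomness no longer automatically supplies inverse matches, and the letter-count discrepancies must be resolved exactly rather than approximately. This is what forces us to work with $N$ copies of $\Gamma$ (trivially $N=1$ in rank two, where the abelianization already forces an exact balance of inverse subwords, and $N=20L$ in higher rank, where we need several parallel copies to distribute discrepancies whose cancellation depends on the commutator structure rather than only on the abelianization). The value of $N$ is chosen so that, by reserving a fixed fraction of the matches identified in Step~1 for use in the endgame, the final residue can be paired directly; this is the step in which the hypothesis $\epsilon\ll 1/T$ is exploited, since the reserved supply must dominate the accumulated pseudorandomness errors.
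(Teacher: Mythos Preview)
Your proposal has a genuine structural gap: the iterative pairing scheme you describe does not actually work, because the invariants you claim to maintain are not preserved.

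First, the claim that ``each pairing consumes two outstanding tags'' and that ``the number of unresolved tags strictly decreases'' is backwards. Pairing two inverse subwords of length $\ge L$ does not consume tags; it \emph{creates} four new tags at the endpoints of the paired segments (or, if you insist on pairing only at existing tags, it consumes at most four and creates at most four, so there is no net decrease). More seriously, your claim (ii) that the residual tagged data stays $(T,\epsilon)$-pseudorandom with $o(\epsilon)$ tag density cannot hold: as you pair off more and more of $\Gamma$, the unpaired remainder shrinks while the number of newly created tags grows, so tag density necessarily blows up. Likewise, pseudorandomness is not inherited by the remainder; what survives after pairing off the ``generic'' matches is precisely the deviation from equidistribution, which is by definition \emph{not} pseudorandom. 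So the induction does not close.

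The paper's argument avoids this by a completely different architecture. Before doing any bulk cancellation, it first extracts from $\Gamma$ a large \emph{reservoir} of standardized tagged loops of length $40L$ (the ``flowers'' cut off as ``tall poppies''). Only then does it do the pseudorandom cancellation step on what remains; the residue $\Gamma''$ of this step has mass $O(\epsilon|\Gamma|)$ and is not pseudorandom at all, but it is cancelled against the reservoir, which was set aside in advance for exactly this purpose. What is left is an almost-equidistributed collection of tagged loops of a single fixed length $40L$, and gluing \emph{that} up is a hard finite combinatorial problem (the content of \S\ref{section:good_pants}, nearly half the paper) solved by explicit ``pants'' and ``annulus'' moves. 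This combinatorial endgame is where the constant $N$ actually enters, via a rational-versus-integral issue in Proposition~\ref{prop:untagged_good_pants}; your explanation of $N$ in terms of ``abelianization forcing exact balance of inverse subwords'' is not correct (homological triviality balances letter counts, not subword counts).

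In short: you are missing both the reservoir mechanism that makes the pseudorandom step close, and the entire combinatorial endgame that disposes of the reservoir.
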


The notation $T\gg L$ means ``for all $T$ sufficiently large depending on $L$'', and similarly
$0 < \epsilon \ll 1/T$ means ``for all $\epsilon$ sufficiently small depending on $T$''.
The {\em density} of tags is just the number of tags divided by the length of $\gamma$, and
the notation $o(\epsilon)$ just means something of negligible size compared to $\epsilon$.
The role of $N$ will become apparent at the last step, where some combinatorial condition can be
solved more easily over the rationals than over the integers (so that one needs to take a multiple
of the original chain in order to clear denominators). In fact, in rank 2 we can actually take $N=1$, 
and in higher rank we can take $N=20L$ (it is probably true that one can take $N=1$ always, but
this is superfluous for our purposes).

Except for the last step (which it must be admitted is quite substantial and takes up
almost half the paper), the argument is very close to that in \cite{Calegari_Walker_LP}.
For the sake of completeness we reproduce that argument here, explaining how to modify it
to control the edge lengths and valence of the fatgraph.

\subsection{Experimental results}\label{subsection:experimental_results}

Theorem~\ref{theorem:thin_fatgraph} asserts that long random words bound 
trivalent fatgraphs (up to taking sufficiently many disjoint copies).  However,
in order for the pseudorandomness to hold at scales required by the argument,
it is necessary to consider random words of enormous length; i.e.\/ on the order of a googol or
more. On the other hand, experiments show that even words of 
modest length bound trivalent fatgraphs with high probability.  
To keep our experiment simple, we considered only the condition of bounding a trivalent
graph, ignoring the question of whether the edges can all be chosen to be long.  

\begin{figure}[ht]
\begin{center}
\labellist
\small\hair 2pt
 \pinlabel {$1$} at 0 323
 \pinlabel {$0.1$} at -8 246
 \pinlabel {$0.01$} at -12 171
 \pinlabel {$0.001$} at -16 95
 \pinlabel {$0.0001$} at -23 18
 \pinlabel {$20$} at 95 -8
 \pinlabel {$40$} at 181 -8
 \pinlabel {$60$} at 265 -8
 \pinlabel {$80$} at 350 -8
 \pinlabel {$100$} at 435 -8
 \pinlabel {$120$} at 520 -8
\endlabellist
\includegraphics[scale=0.5]{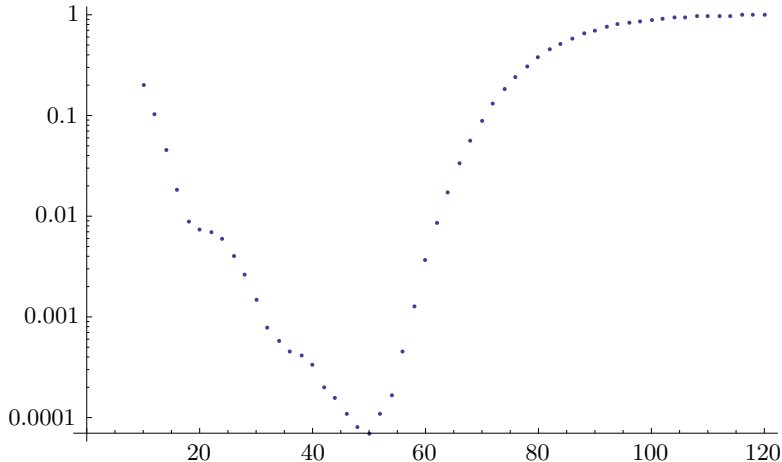}
\caption{The experimental log-fraction of random words of each length 
which bound a trivalent fatgraph.}
\label{figure:trivalent_experiment}
\end{center}
\end{figure}

In a free group of rank 3, we looked at between 100000 and 400000 cyclically reduced
homologically trivial words of each even length from 10 to 120. The proportion of such words 
that bound trivalent fatgraphs is plotted in
Figure~\ref{figure:trivalent_experiment}.  The vertical axis has a log-scale to show some interesting 
features of the data.  As one can see, bounding a trivalent fatgraph 
happens in practice for $n$ far below the purview of Theorem~\ref{theorem:thin_fatgraph}.
The curious local minimum at length $\sim 50$ is presumably a combinatorial artifact.

\subsection{Proof of the Thin Fatgraph Theorem}\label{subsection:thin_fatgraph}

We now give the proof of Theorem~\ref{theorem:thin_fatgraph}. The proof proceeds in several
steps. The first few steps are more probabilistic in nature. The last step is more
combinatorial and quite intricate, and is deferred to \S~\ref{section:good_pants}.

\medskip

Pick a $\gamma$ in $\Gamma$. Now, $\gamma$ is a cyclic word; starting at any letter we can
express it in the form 
$$\gamma= w_1w_2\cdots w_N v$$
where each $|w_i|=T$ and $N=\lfloor |\gamma|/T \rfloor$.
Since $\gamma$ is $(T,\epsilon)$-pseudorandom, the $w_i$ are very well equidistributed among
the reduced words of length $T$. Moreover, since by hypothesis (2)
the density of tags is of order $o(\epsilon)$, the proportion of the $w_i$ that contain a
tag is also of order $o(\epsilon)$. In the next step we restrict attention to the $w_i$ 
that do not contain a tag.

\subsubsection{Tall poppies}

Throughout the remainder of the proof we fix some $T'$ which is an odd multiple of $10L$
with $1000L < T-T' \le 2000L$ (in fact, something like $T-T'>4L$ is sufficient, but there is
no point in trying to optimize constants here). 
Note that we still have $T'\gg L$. For each $w_i$ we let $v_i$
be the initial subword of length $T'$. Note that the map which takes a reduced word
of length $T$ to its prefix of length $T'$ takes the uniform measure to a multiple of
the uniform measure, and therefore the $v_i$ are also $\epsilon$-pseudorandom.

The first step is to create a collection of {\em tall poppies}. We fix some $v:=v_i$ and
read the letters one by one. As we read along, we look for a pair of inverse subwords
$x,X$ each of length $10L$ and separated by a subsegment $y$ of length $40L$. Further we require
that the copy of $xyX$ should have the property that the $x$ and $X$ are maximal inverse
subwords at their given locations, so that the result of pairing creates reduced tagged cyclic
words. If the copy of $xyX$ is not too close to a tag of $\gamma$ (say, there is no tag within
a $10L$ neighborhood), we create some partial fatgraph by identifying $x$ to $X$;
this creates a {\em tall poppy} whose {\em stem} is $x$, and whose {\em flower} is $y$.
Once we find and create a tall poppy, we look for each subsequent tall poppy at successive
locations along $v$ subject to the constraint that adjacent tall poppies are separated by subwords
whose length is an even multiple of $10L$. Furthermore, we insist that the first  
tall poppy occurs at distance an even multiple of $10L$ from the start of $v$. See
Figure~\ref{tall_poppies} for an example; the ``dots'' in the figure indicate units of $10L$.

\begin{figure}[htpd]
\begin{center}
\includegraphics[scale=0.8]{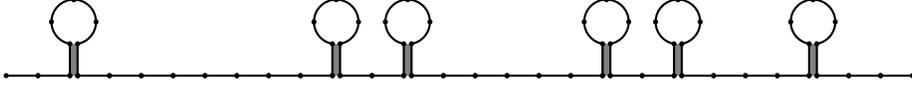}
\caption{A word $v$ of length $630L$ with $6$ tall poppies folded off}
\label{tall_poppies}
\end{center}
\end{figure}

For each $v_i$ we fold off tall poppies as above. The result of this
step is to create a partial fatgraph for each $\gamma$ consisting of some tagged loop
$\gamma'$ (which is obtained from $\gamma$ by cutting out all the $xyX$ subwords and identifying
endpoints) and a {\em reservoir} of flowers. Observe that every tagged cyclic
word of length $40L$ occurs as a flower, and the set of tagged flowers is
$\epsilon$-pseudorandom (conditioned on any compatible label on the tag).
Note that as remarked above, we are only restricting attention to $w_i$ that do not contain
a tag of $\gamma$, so the operation of creating a tall poppy will never produce two tags
that are too close together. 

Informally, we say that the reservoir contains an {\em almost equidistributed collection}
of tagged cyclic words of length $40L$. We can estimate the total number of flowers of each kind:
at each location that a flower might occur, we require two subwords of length $10L$ to be
inverse, which will happen with probability $(2k-1)^{-10L}$. The number of locations is roughly of
size $O(|\gamma|/10L)$. So the number of copies of each tagged loop in the reservoir is of size
$\delta\cdot|\gamma|$ (up to multiplicative error $1\pm \epsilon$) for some specific positive
$\delta>0$ depending only on $L$.

\subsubsection{Random cancellation}

After cutting off tall poppies, the $v_i$ become tagged words $v_i'$. Observe that the $v_i'$
have variable lengths (differing from $T'$ by an even multiple of $10L$) and have tags occurring
at some subset of the points an even multiple of $10L$ from the start. The main observation
to make is that the $\epsilon$-pseudorandomness of the $v_i$ 
propagates to $\epsilon$-pseudorandomness of the $v_i'$. That is, if $\sigma$ is a reduced word of
length $T'-m10L>0$ for some even $m$, then among the $v_i'$ of length $T'-m10L$, the proportion that
are equal to $\sigma$ is equal to $1/(2k)(2k-1)^{T'-m10L-1}$ up to a multiplicative error of size
$1\pm \epsilon$. This is immediate from the construction.

Recall that we chose $T'$ to be an {\em odd} multiple of $10L$. This means that when
we pair a segment $v_i'$ labeled $\sigma$ with a $v_j'$ labeled $\sigma^{-1}$ the tags of
$v_i'$ and $v_j'$ do not match up, and in fact any two tags are no closer than distance $10L$.
In fact, it is important that after pairing up inverse segments, the tagged loops that remain are
reduced, so we write each $v_i'$ in the form $l_i v_i'' r_i$ where each of $l_i,r_i$ has length
$5L$, and pair $v_i''$ with $v_j''$ for some $v_j'$ of the form $l_j v_j'' r_j$ 
where $v_j'' = (v_i'')^{-1}$, and the words
$l_ir_j$ and $r_il_j$ are reduced. By $\epsilon$-pseudorandomness, we can find such pairings of
all but $O(\epsilon)$ of the $v_i'$ in this way. Here, as in the previous subsection, we
do {\em not} pair $v_i'$ that contain one of the original tags of $\gamma$; since the
fraction of such $v_i'$ is $o(\epsilon)$ (again by hypothesis (2)), 
the error term can be absorbed into the $O(\epsilon)$ term.

Thus the result of this pairing is to produce a trivalent partial fatgraph with all edges of
length at least $5L$. Removing this from the $v_i'$ produces a collection of tagged loops $\gamma''$ with
$|\gamma''| = O(\epsilon\cdot |\gamma|)$.

\subsubsection{Cancelling $\Gamma''$ from the reservoir}

Let $\Gamma''$ be the union of all the $\gamma''$, and pool the reservoirs from
each $\gamma$ into a single reservoir.

Notice that by construction, and by hypothesis (1) of the theorem, 
no tagged loop in $\Gamma''$ has two tags closer than distance $4L$ (in fact, it is only the
original tags of $\gamma$ which might be as close to each other as $4L$; the tags arising
from tall poppies or by identifying the various $v_i'$ in pairs will all be distance at least $20L$
apart).

For each tagged loop $\nu$ in $\Gamma''$ we can build a copy of $\nu^{-1}$
out of finitely many flowers in the reservoir in such a way that the result of pairing
$\nu$ to this $\nu^{-1}$ is a trivalent partial fatgraph with all edges of length at least $L$, 
and the number of flowers that we need is proportional to $|\nu|/40L$. There is a
slight subtlety here, in that the length of each flower is $40L$, and the result
of partially gluing up a collection of cyclic words of even length always leaves an
even number of letters unglued. Fortunately, the assumption that $\Gamma$ is homologically
trivial implies that $|\Gamma|$ itself is even, and since each
flower also has an even number of letters, it follows that $|\Gamma''|$ is even.
A flower with the cyclic word $xyzY$ can be partially glued to produce two tagged
loops $x$ and $z$, and if $x$ and $z$ are odd, each can be used to contribute to
a copy of some $\nu^{-1}$ of odd total length. Since the number of odd $|\nu|$ is
even, all of $\Gamma''$ can be cancelled in this way.

\begin{figure}[htpd]
\begin{center}
\includegraphics[scale=0.4]{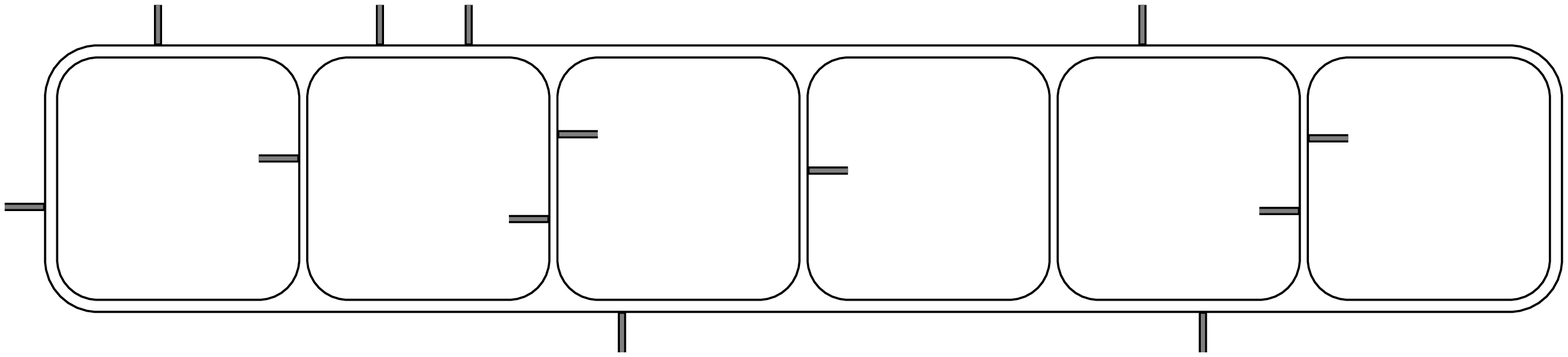}
\caption{Cancelling a tagged loop $\nu$ of $\Gamma''$ by using flowers plus at most one
loop of odd total length.}
\label{cancellation}
\end{center}
\end{figure}

The construction of $\nu^{-1}$ from flowers plus at most one loop of odd total length,
cancelling a tagged loop $\nu$ in $\Gamma''$, is indicated in Figure~\ref{cancellation}.
Each of the small loops in the figure has length of order $40L$, and they are matched along
segments roughly of order $10L$. Adjusting the length of the segments along which
adjacent flowers are paired gives sufficient flexibility to build $\nu^{-1}$ (modulo the
parity issue, which is addressed above). Notice that if $\nu$ contains a long string of tags, each 
distance $\sim 4L$ from the next, we might need to attach two flowers near the midpoint
between two adjacent tags, so that there might be some edges of length $2L$ in the trivalent
partial fatgraph produced at this step. This is good enough to satisfy the conclusion of the
theorem (with some room to spare).

Since $|\Gamma''| = O(\epsilon \cdot|\Gamma|)$ whereas the number of flowers of each kind in
the reservoir is of order $\delta\cdot|\Gamma|$, if we take $\epsilon \ll \delta$ we can
glue up all of $\Gamma''$ this way, at the cost of slightly adjusting the proportion of
each kind of tagged loop in the reservoir.

\subsubsection{Gluing up the reservoir}

We are now left with an almost equidistributed collection of tagged loops of length $40L$ in the
reservoir. Adding to the reservoir the contribution from each $\gamma$ in $\Gamma$, and using
the fact that $\Gamma$ was homologically trivial, we see that the content of the reservoir is
also homologically trivial. It remains to show that any such collection can be glued up to
build a trivalent partial fatgraph with all edges of length at least $L$.

In fact, we only need two kinds of gluings to achieve this: gluings that result in partial
fatgraphs that fatten to {\em annuli} and to {\em pants}. The argument is purely combinatorial,
but quite intricate and involved, and makes up the content of \S~\ref{section:good_pants}.

\begin{remark}\label{Markov_chain_remark}
At this point it is worth spelling out the modifications that need to be made to generalize
the Thin Fatgraph Theorem to random chains generated by an ergodic stationary Markov process of
full support, as discussed in the introduction. 

First, the definition of pseudorandom must be modified. Let's suppose that in our Markov
model, the expected number of copies of a word $\sigma$ in any sufficiently long string $\tau$ is
$E(\sigma)|\tau|$ for some positive $E(\sigma)$. The correct definition of $(T,\epsilon)$-pseudorandomness
of some word $\Gamma$ in this context is that for any cyclic conjugate expressed in the form
$$\Gamma:=w_1w_2\cdots w_Nv$$
with each $w_i$ of length $T$ and at most one word $v$ of length $<T$, for every reduced word
$\sigma$ of length $T$ in $F_k$ there is an estimate
$$1-\epsilon \le \frac {\# \lbrace i\text{ such that } w_i=\sigma\rbrace} N \cdot E(\sigma)^{-1} \le 1+\epsilon$$
Such pseudorandomness holds (with very high probability) for sufficiently long
random words produced by the Markov process.

If one further assumes that $E(\sigma)=E(\sigma^{-1})$ for every $\sigma$, all steps of the
argument above go through (the equality $E(\sigma) = E(\sigma^{-1})$ is used to ensure that after
the random cancellation step the mass of the remainder is small compared to that of the reservoir)
and we are left with a reservoir of loops, where the relative proportion of loops of kind
$\sigma$ and $\sigma'$ is very close to $E(\sigma)/E(\sigma')$. Tagged loops with inverse labels
$\sigma$ and $\sigma^{-1}$ for which the tags are not ``too close'' (under the orientation-reversing
identification of $\sigma$ with $\sigma^{-1}$) can be paired, and therefore we can reduce to the
case of an almost equidistributed collection of tagged loops, at the cost of adjusting the constants.

If one does not assume that $E(\sigma)=E(\sigma^{-1})$ for every $\sigma$, the analogue of the
Thin Fatgraph Theorem is {\em not} true on the nose. But for applications to the construction of
surface subgroups by the method of \S~\ref{section:one_relator_group} it is sufficient to apply
the theorem to (subchains of) chains of the form $r \cup r^{-1}$ where $r$ is a random relator;
now the distribution of $\sigma$ subwords in long segments of $r$ very closely matches the distribution
of $\sigma^{-1}$ subwords in long segments of $r^{-1}$, and the construction goes through.
\end{remark}

\section{Annulus moves and pants moves}\label{section:good_pants}

In this section we show that an almost equidistributed collection of tagged loops of length
$40L$ can be glued up to a trivalent partial fatgraph with all edges of length at least $L$.
Together with the content of \S~\ref{subsection:thin_fatgraph}, 
this will conclude the proof of Theorem~\ref{theorem:thin_fatgraph}.
The technical detail in this section is only necessary because we 
insist that our fatgraphs (surfaces) be orientable.  There is a shortcut 
if we are willing to accept a nonorientable surface, explained in 
Section~\ref{subsection:nonorientable}.

\begin{remark}
For the entirety of this section, we will rescale $40L$ to $L$.  That is, we 
prove that an almost equidistributed collection of tagged loops of length $L$, 
where $L$ is divisible by $4$, can be glued up to a trivalent partial fatgraph 
with all edges of length at least $L/4$.  This rescaling is intended to 
remove meaningless factors of $40$ throughout the argument.
\end{remark}

\subsection{Pants and annuli}

Let $S(L)$ be the set of tagged loops of length $L$, 
where $L$ is divisible by $4$.  Let $W(L)$ be the 
vector space over $\Q$ spanned by $S(L)$; that is, 
$W(L) = \Q[S(L)]$.  We define $h:W(L) \to \Z^k$ to be the 
linear map so that $h(v)$ is the homology class of $v$.  Finally, 
$V(L) = \ker h \subseteq W(L)$ is the vector space of homological trivial 
vectors.  We are interested only in $V(L)$, not $W(L)$, so by 
``full dimensional'', we mean a full dimensional subset of $V(L)$. 
When we say that a vector \emph{projectively bounds} a fatgraph, 
we mean that there is some multiple of the 
vector which has integer coordinates, and the collection of loops represented by the 
integral vector bounds a fatgraph.  A (necessarily integral)
vector \emph{bounds} a fatgraph if the collection of loops that it represents 
bounds a fatgraph.  The uniform vector 
of all $1$'s will be of particular interest, and we denote it by $\1$.

We say that a fatgraph $Y$ with boundary a collection of loops in $S(L)$ 
is \emph{thin} if $Y$ is trivalent and the trivalent vertices 
of $Y$ are pairwise distance 
at least $L/4$ apart, where the tags are counted as trivalent vertices.
Let $C(L)$ be the subset of $V(L)$ of positive vectors which projectively bound a thin
fatgraph.  If $v,w \in C(L)$, then the disjoint union of the thin 
fatgraphs for $v$ and $w$ gives a thin fatgraph for $v+w$.  Also, the definition 
of $C(L)$ shows it to be closed under scalar
multiplication.  Hence, $C(L)$ is a cone.  A variant of the 
\texttt{scallop} \cite{scallop} algorithm gives an explicit hyperplane 
description of $C(L)$, and shows that it is a finite sided polyhedral cone, but we won't 
need this fact in the sequel.

We will build thin fatgraphs out of two kinds of pieces: (good pairs of) \emph{pants} 
and (good) \emph{annuli} (the terminology is supposed to suggest an affinity with
the Kahn--Markovic proof of the Ehrenpreis conjecture, but one should not make too much of this).
A good pair of pants is one
whose edge lengths are all exactly $L/2$ and 
whose tags are each on different edges and exactly distance $L/4$ from 
the real trivalent vertices.  Note the boundary of each such pair 
of pants lies in $V(L)$.  A good annulus is a fatgraph annulus with boundary in $S(L)$ 
whose tags are distance at least $L/4$ apart.  Hereafter, all pants and annuli are good.

Define an involution $\iota:S(L) \to S(L)$ which takes each loop 
to its inverse with the tag moved to the diametrically 
opposite position.  There are several options for the tag at each position -- 
for the definition of $\iota$, we arbitrarily choose any pairing of the options 
to obtain an involution.  There is a special class of annuli, which we 
call $\iota$-annuli, which have boundary of the form $s + \iota(s)$.  
Notice that the collection of all $\iota$-annuli is a thin 
fatgraph which bounds the uniform vector $\1$.

The bulk of our upcoming work lies in manipulating \emph{un}tagged loops, 
and our result here is independently interesting, so 
we will need some complementary definitions.  Let $S'(L)$ be the 
set of untagged loops of length $L$, let $W'(L) = \Q[S'(L)]$, and 
let $V'(L)$ be the vector space of homologically trivial vectors in $W'(L)$.  
We define a thin fatgraph and the uniform vector $\1' \in V'(L)$ 
as before.  The set $C'(L) \subseteq V'(L)$ is the cone of vectors in $V'(L)$ 
which projectively bound thin fatgraphs.  
An untagged good pair of pants is a trivalent 
pair of pants whose edge lengths are exactly $L/2$, and an 
untagged annulus is simply an annulus whose boundary is two 
loops of length $L$.  For untagged loops, $\iota:S' \to S'$ 
is simply inversion, and all annuli are $\iota$-annuli, 
although we may refer to them explicitly as $\iota$-annuli 
to emphasize their purpose.

For many applications, the property of a collection of loops that it
{\em projectively} bounds a thin fatgraph is 
good enough (see e.g.\/ \cite{Calegari_Walker_LP}), and this is in many
ways a more pleasant property to work with, since the set 
of vectors (representing collections of loops)
which projectively bound a thin fatgraph is a cone, whereas the set of
vectors that {\em bound} (i.e.\/ without resorting to taking multiples) is the intersection
of this cone with an integer lattice. However, in this paper it is important to distinguish
between ``bounding'' and ``projectively bounding'', and therefore
in the following propositions, we give both the stronger, technical ``integral''
statement and the weaker, cleaner ``rational'' one.

\begin{proposition}\label{prop:untagged_good_pants}
For any integral vector $v \in V'(L)$, there is $n \in \N$ so that 
$(L/2)v + n\1'$ bounds a collection of good pants and annuli.
Consequently, $C'(L)$ is full dimensional and contains an open 
projective neighborhood of $\1'$.
\end{proposition}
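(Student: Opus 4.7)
The plan is to build an explicit fatgraph bounding $(L/2)v + n\1'$ by a combinatorial matching argument. Good pants and $\iota$-annuli are the fundamental building blocks, and the ``reservoir'' $n\1'$ provides the slack needed to make the matching succeed.

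For $n$ large enough the vector $(L/2)v + n\1'$ is non-negative: although $(L/2)v$ may have negative entries on $\mathrm{supp}(v^-)$, adding $n\1'$ shifts every coordinate by $n$, so $n \ge (L/2)\lVert v^-\rVert_\infty$ suffices. One then thinks of the coefficient $(L/2)v(w) + n$ as a ``demand'' of that many boundary copies of $w$ to realize by pants and annuli. A good pants with half-loops $(s_1, s_2, s_3)$ realizes three boundary copies, one for each of $s_1 s_2^{-1}$, $s_2 s_3^{-1}$, $s_3 s_1^{-1}$, while a good annulus realizes two copies ($w$ and $w^{-1}$). The central step is a matching: assign each demanded boundary copy to a pants triple or an annulus pair. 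Organizing by cuts, I would cut each loop $w$ at the $L/2$ alternating positions so that every demanded copy comes with a choice of cut decomposing $w = st$ into half-loops; a pants at that cut requires a third half-loop $u$ such that $tu^{-1}$ and $us^{-1}$ are also reduced cyclic words of length $L$. Homological triviality of $(L/2)v + n\1'$, together with the $\iota$-symmetry of $n\1'$, ensures the global half-loop counts balance, and a Hall-type / flow argument then produces the matching provided the reservoir over-supplies every half-loop type; from the matching one reads off the fatgraph.

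Once $(L/2)v + n\1' \in C'(L)$ for every integer $v \in V'(L)$, the cone contains $\1' + \tfrac{L}{2n}v$ projectively, so $C'(L)$ is full-dimensional and $\1'$ lies in its projective interior, which gives both consequences of the Proposition. The main obstacle is making the matching respect orientability of the fatgraph: good pants require three boundary loops linked by specific reduced-word constraints, so an arbitrary cut does not automatically extend to a pants. Controlling which half-loop types can be combined, and absorbing the parity obstructions this generates (e.g.\ pairing odd-length leftovers via annuli or additional pants), is where the bulk of the technical work in Section~\ref{section:good_pants} is spent.
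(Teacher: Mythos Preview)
Your proposal is a strategy sketch rather than a proof, and the central step---the ``Hall-type / flow argument'' producing the matching---is asserted but not carried out. This is a genuine gap. Assembling loops into good pants is a $3$-uniform hypergraph matching problem: each pair of pants realizes a correlated triple of boundary loops $s_1s_2^{-1},\, s_2s_3^{-1},\, s_3s_1^{-1}$, and Hall's theorem does not apply to such problems. Balanced half-loop counts and a large reservoir are necessary, but you have not shown they are sufficient; the reduced-word constraints at the two trivalent vertices couple the three half-loops nontrivially, and you give no argument that a flow-based or greedy assignment can always be completed. Your derivation of the ``consequently'' clause from the main statement is correct, but that is the easy part.

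More importantly, your final paragraph suggests that the paper's \S\ref{section:good_pants} fills in exactly the matching/parity argument you outline. It does not. The paper takes a completely different, constructive route: it introduces the notion of \emph{pants equivalence} and gives an explicit sequence of local moves that simplifies an arbitrary loop step by step---first to loops with at most four runs (via diameters and an intermediate-value argument on run counts), then to \emph{balanced} four-run loops (via ``triangle moves'' that shift letters between runs), then to loops with a diameter between corners, then to two-run loops, and finally to a handful of uniform, even, and remainder loops that a short linear-algebra computation pairs into annuli. There is no global matching or flow anywhere. The factor $L/2$ in the statement also does not arise from the $L/2$ half-loop cuts as you propose: in rank~$2$ the paper in fact proves the stronger Proposition~\ref{prop:rank_2_untagged} with no such factor, and $L/2$ enters only in higher rank, at the very last step, where $L/2$ copies are taken so that the remainder two-run loops of each type can be re-collected and eliminated. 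If you want to pursue a genuine flow alternative you would need to exhibit a bipartite or fractional relaxation that sidesteps the $3$-hypergraph obstruction (e.g.\ by drawing two of the three pants boundaries from the reservoir and proving the resulting supply inequalities); that may be feasible, but it is real work your sketch does not do.
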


There is a stronger version without the $L/2$ factor if the free group has rank $2$.

\begin{proposition}
\label{prop:rank_2_untagged}
If the free group has rank $2$, then for any integral vector 
$v \in V'(L)$, there is $n \in \N$ so that 
$v + n\1'$ bounds a collection of good pants and annuli.
\end{proposition}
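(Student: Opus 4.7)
The plan is to bootstrap from Proposition \ref{prop:untagged_good_pants} by exploiting the combinatorial rigidity of reduced words in the rank $2$ free group. Let $B'(L) \subset V'(L)$ denote the semigroup of non-negative integral vectors that bound a union of good pants and $\iota$-annuli; this semigroup is closed under addition, and the disjoint union of all $\iota$-annuli shows $\1' \in B'(L)$. The target is $v + n\1' \in B'(L)$ for some $n$. Proposition \ref{prop:untagged_good_pants} already provides $(L/2) v + m \1' \in B'(L)$, so the task is to remove the multiplicative factor of $L/2$; equivalently, we must show that in rank $2$ the sublattice of $V'(L)$ generated by pants-and-annulus boundaries (modulo $\Z \cdot \1'$) is the full integer lattice $\ker h \cap \Z[S'(L)]$, and not a proper sublattice of index $L/2$.

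I would first reduce the proposition to an \emph{exchange lemma}: for every pair of distinct loops $s, s' \in S'(L)$ with $h(s) = h(s')$, there exist $P, Q \in B'(L)$ and $k \in \Z$ with $[s] + [P] = [s'] + [Q] + k \1'$. Granting this, any integral $v \in V'(L)$ can be transported, one homology fiber at a time, to a chosen standard representative of its class by chains of such exchanges; the accumulated $\1'$-defect is then absorbed by adding enough copies of the reservoir of $\iota$-annuli.

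To prove the exchange lemma, I would connect $s$ to $s'$ by a short sequence of elementary ``leg swaps'', each realized by a single good pair of pants. The prototypical move replaces a pants with half-edges $x_1, x_2, x_3$ (and boundaries $x_1 x_2^{-1}, x_2 x_3^{-1}, x_3 x_1^{-1}$) by the pants with half-edges $x_1', x_2, x_3$, producing $s' = x_1' x_2^{-1}$ in place of $s = x_1 x_2^{-1}$, provided $x_1' x_2^{-1}$ and $x_3 (x_1')^{-1}$ are cyclically reduced. In rank $2$, for any prescribed boundary letter, at least two of the four generators are admissible continuations, so these reducedness conditions can always be met by adjusting a bounded number of letters at the joins. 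Iterating produces connectivity of $S'(L)$ restricted to each fiber of $h$, and composing the swaps with $\iota$-annuli lets one rebalance signs.

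The main obstacle will be arranging the sequence of elementary moves connecting $s$ to $s'$ so that reducedness constraints at the three pant joins never become simultaneously unsatisfiable, and so that the total $\1'$-defect remains uniformly bounded. The rank $2$ hypothesis enters essentially here: with only four generators, one never encounters a situation in which every admissible continuation at one join forces a forbidden choice at another, so every single exchange can be carried out locally. In higher rank, analogous obstructions do occur, and the averaging over $L/2$ distinct cyclic positions hidden in the proof of Proposition \ref{prop:untagged_good_pants} is exactly what resolves them --- which is why that proposition carries the factor $L/2$ and the rank $2$ strengthening does not.
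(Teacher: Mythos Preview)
Your strategy of reducing to an exchange lemma is the right shape --- the paper's notion of ``pants equivalence'' is precisely your exchange relation --- but your sketch has real gaps, and the paper does not bootstrap from Proposition~\ref{prop:untagged_good_pants} as you propose. Rather, its proof of that proposition in rank $2$ \emph{is} the proof of Proposition~\ref{prop:rank_2_untagged}: an explicit sequence of pants-equivalence reductions cuts every loop down to at most four \emph{runs} (maximal subwords $a^p$ or $b^p$), then uses ``triangle moves'' to balance $4$-run loops, give them a diameter between corners, and reduce them to $2$-run loops, and finally trades and combines $2$-run loops until only uniform loops, even loops, and at most one remainder of each of the four types $(a,b),(a,B),(A,b),(A,B)$ survive. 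A short linear-algebra computation using $h(v)=0$ then forces these four remainders to pair off as inverses, which bound annuli. No copies of the collection are ever taken. The $L/2$ factor in higher rank enters only at this last step: with $4\binom{r}{2}$ types the homology equations are underdetermined, so the paper takes $L/2$ copies to absorb the leftovers.

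Your proposed proof of the exchange lemma has two specific problems. First, a ``leg swap'' alters two pant boundaries at once, not one: replacing $x_1$ by $x_1'$ sends $x_1x_2^{-1}\mapsto x_1'x_2^{-1}$ but simultaneously $x_3x_1^{-1}\mapsto x_3(x_1')^{-1}$; iterating such coupled moves does not obviously connect arbitrary $s,s'$ with $h(s)=h(s')$, and making that connectivity argument precise is essentially the content of the paper's five reduction lemmas. Second, your account of why rank $2$ is special is backwards: reducedness constraints at the pant joins are \emph{easier} to satisfy with more generators, not harder. The genuine rank-$2$ ingredient is that there are only four $2$-run loop types, so the two homology equations determine the four remainders exactly.
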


We believe that Proposition~\ref{prop:rank_2_untagged} is probably 
true for higher rank, but the proof would be more complicated than 
we wish for a detail that we do not need.

We delay the rather tedious proof of 
Proposition~\ref{prop:untagged_good_pants} in favor of stating 
the tagged version, which is a corollary and is the version we need.

\begin{proposition}
\label{prop:good_pants}
For any integral vector $v \in V(L)$, there is $n \in \N$ so that 
$(L/2)v + n\1$ bounds a collection of good pants and annuli.
Consequently, $C(L)$ is full dimensional and contains an open 
projective neighborhood of $\1$.
\end{proposition}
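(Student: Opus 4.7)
The plan is to reduce the tagged proposition to the untagged Proposition~\ref{prop:untagged_good_pants} using the forgetful map. Let $\pi: W(L)\to W'(L)$ be the linear map sending each tagged loop $s$ to its underlying untagged loop $[s]$. Since tags do not affect homology, $\pi$ restricts to $V(L)\to V'(L)$, and $\pi(\1)$ is a positive multiple of $\1'$. Moreover, every good pair of tagged pants maps under $\pi$ to a good untagged pants, and every $\iota$-annulus (or good off-$\iota$ annulus) maps to an untagged annulus, so thin tagged fatgraphs push forward to thin untagged fatgraphs.

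Given an integral vector $v\in V(L)$, I would first set $v^{*}=\pi(v)\in V'(L)$ and invoke Proposition~\ref{prop:untagged_good_pants} to obtain $n_{0}\in\N$ and a thin untagged fatgraph $Y$ whose boundary is $(L/2)v^{*}+n_{0}\1'$. I would then lift $Y$ to a tagged fatgraph $\tilde Y$ by the canonical tagging of its good pants (tags at the midpoint of each edge) and by an arbitrary compatible tagging of its annuli, obtaining a tagged boundary vector $w$ satisfying $\pi(w)=(L/2)v^{*}+n_{0}\1'$. After increasing $n_{0}$ if necessary using the fact that $\1$ is already bounded by the thin fatgraph formed by all $\iota$-annuli, I can arrange $w=(L/2)v+n\1-d$ where $d\in\ker\pi$ is a sum of ``tag swaps'' $s_{i}-s_{j}$ with $[s_{i}]=[s_{j}]$.

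The final step is to absorb $d$ by local modifications within the $\iota$-annulus reservoir provided by the $n\1$ term. The key observation is that a single $\iota$-annulus with boundary $s+\iota(s)$ may be exchanged for a good off-$\iota$ annulus with boundary $s'+\iota(s)$, where $s'$ is any alternative tagged version of $[s]$ whose tag position lies at least $L/4$ from that of $\iota(s)$ on the common untagged loop; this swap alters the tagged boundary vector by $s'-s$. Taking $n$ large enough guarantees sufficiently many $\iota$-annuli of each type to execute every tag swap occurring in $d$, yielding a thin tagged fatgraph with boundary exactly $(L/2)v+n\1$. The full-dimensionality of $C(L)$ then follows because, as $v$ ranges over a rational basis of $V(L)$, the vectors $\1+\varepsilon v$ all lie in $C(L)$ for small rational $\varepsilon>0$, giving an open projective neighborhood of $\1$. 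I expect the main obstacle to be verifying that enough good off-$\iota$ annuli exist to perform every required tag swap: one must check that for any $s\ne s'$ with $[s]=[s']$ there is a choice of partner whose tag is at least $L/4$ away from both, a combinatorial constraint that should be satisfied whenever $L$ is large enough relative to the number of tag types.
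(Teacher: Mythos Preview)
Your approach is essentially the paper's: push $v$ forward under the forgetful map, apply Proposition~\ref{prop:untagged_good_pants}, lift the resulting pants and annuli by inserting tags in the canonical positions, and then repair the tag discrepancies using annuli. The paper phrases the repair as \emph{adding} a new annulus with boundary $\iota(\alpha)+\gamma$ (so the wrong boundary $\alpha$ becomes $\gamma$ plus the $\iota$-pair $\alpha+\iota(\alpha)$, which is absorbed into $n\1$ at the end), whereas you phrase it as \emph{modifying} an $\iota$-annulus already sitting in a pre-allocated $n\1$ reservoir. These are the same move bookkept differently.

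The one point you flag as an obstacle is real and needs to be written out, but it does not require $L$ large or any counting of tag types. If the tag on $s'$ happens to land within $L/4$ of the tag on $\iota(s)$, the single annulus $s'+\iota(s)$ is not good; the fix is to go through an intermediate tagged loop $\delta$ with $[\delta]=[s]$ whose tag is chosen so that both $\delta+\iota(s)$ and $s'+\iota(\delta)$ are good annuli. Concretely, you need the tag of $\delta$ at distance $\ge L/4$ from that of $\iota(s)$, and the tag of $\iota(\delta)$ (diametrically opposite) at distance $\ge L/4$ from that of $s'$. Each condition excludes an arc of length $<L/2$, so an admissible $\delta$ always exists. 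In your language this is two swaps $s\to\delta$ and $\delta\to s'$, each consuming one $\iota$-annulus from the reservoir; in the paper's language it is adding two annuli and collecting two extra $\iota$-pairs. Once you make this explicit your argument is complete and matches the paper's.
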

\begin{proof}
Let us be given an integral vector $v \in V(L)$.  
Define $f:V(L) \to V'(L)$ to be the map $S(L) \to S'(L)$ 
which forgets the tag, extended by linearity.  
By Proposition~\ref{prop:untagged_good_pants}, we can find a 
collection of pants and annuli which has boundary $(L/2)f(v) + m\1'$. 
Call this fatgraph $Y'$.  Now place 
arbitrary tags in the forced positions on the pants (in the middle of the edges) 
and in allowed positions on the annuli (at least $L/4$ apart) to obtain $Y$.  
Clearly, $Y$ is a thin fatgraph, and $Y$ almost has boundary 
$(L/2)v + m\1$, as desired, but the tags are in the wrong places.  We will fix 
this by simply adding annuli which ``twist'' the tags into the right positions.

Given some pair of pants in $Y$ with boundary 
$\alpha_1 + \alpha_2 + \alpha_3$, let us focus on ``twisting'' the tag 
on $\alpha_1$.  The loop $\alpha_1$ corresponds to a loop 
$\gamma_1$ in $(L/2)v$, and the only difference is that the tag on $\alpha_1$ is 
in a different position from the tag on $\gamma_1$.  There are two cases.  
If the tags on $\iota(\alpha_1)$ and $\gamma_1$ are at least $L/4$ apart, then 
we simply add an annulus with boundary $\iota(\alpha_1) + \gamma_1$.  
If the tags are closer than $L/4$, then we need two annuli: 
one with boundary $\iota(\alpha_1) + \delta_1$, and another with boundary 
$\iota(\delta_1) + \gamma_1$, where here $\delta_1$ is the same loop 
as $\alpha_1$ and $\gamma_1$ with the tag shifted so that the tags on $\delta_1$ 
and $\iota(\alpha_1)$ are at least $L/4$ apart, and similarly for $\iota(\delta_1)$ and $\gamma_1$.  The result is that we 
have added annuli to resolve the boundary from $\alpha_1$ to either 
$\alpha_1 + \iota(\alpha_1) + \gamma_1$ or 
$\alpha_1 + \iota(\alpha_1) + \delta_1 + \iota(\delta_1) + \gamma_1$; that is, 
$\iota$-pairs plus the desired tagged loop $\gamma_1$.
Figure~\ref{fig:resolve_boundary} shows this operation.  

\begin{figure}[ht]
\begin{center}
\labellist
\small\hair 2pt
 \pinlabel {$\alpha_1$} at -1 24
 \pinlabel {$\alpha_2$} at 21 24
 \pinlabel {$\alpha3$} at 42 24
 \pinlabel {$\iota(\alpha_1)$} at 123 11
 \pinlabel {$\delta_1$} at 85 43
 \pinlabel {$\gamma_1$} at 174 34
 \pinlabel {$\iota(\delta_1)$} at 180 3
\endlabellist
\includegraphics[scale=1.5]{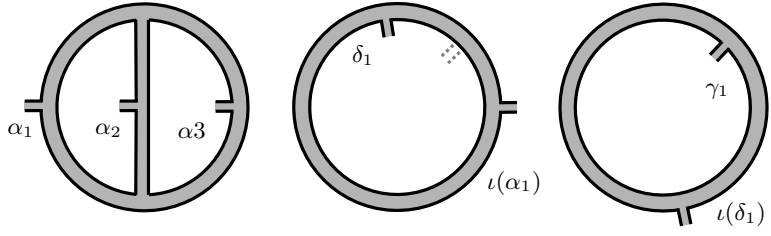}
\caption{Adding annuli to fix an incorrectly tagged boundary.  
We want the loop $\gamma_1$; we have the loop $\alpha_1$, with the 
tag in the wrong place.  The dotted gray tag indicates the proximity of 
the tags on $\iota(\alpha_1)$ and $\gamma_1$.  If it were farther 
away, we'd be done; however, it's too close, so we introduce 
$\delta_1$ so that the distance between the tags on $\iota(\alpha_1)$ and $\delta_1$ 
is at least $L/4$, and the distance between the tags on $\iota(\delta_1)$ and 
$\gamma_1$ is also at least $L/4$.  The result is 
the desired boundary $\gamma_1$, plus the $\iota$  pairs 
$\alpha_1 + \iota(\alpha_1) + \delta_1 + \iota(\delta_1)$.  
This procedure is repeated for $\alpha_2$ and $\alpha_3$.}
\label{fig:resolve_boundary}
\end{center}
\end{figure}

After twisting all the tags in this manner, we are left with 
a collection of pants and annuli with boundary 
$(L/2)v + \Delta + \iota(\Delta) + A + \iota(A)$, where 
$\Delta$ and $A$ are the $\alpha_i$ and $\delta_i$ loops 
used to twist the tags.  By adding $\iota$-annuli, we can make the 
boundary of $Y$ be $(L/2)v + n\1$ for some $n \ge m$, as desired.
\end{proof}

It remains to prove Proposition~\ref{prop:untagged_good_pants}, which 
we now do.

\subsection{Proof of Proposition~\ref{prop:untagged_good_pants}}

Let us be given an integral vector $v' \in V'(L)$. 
The vector $v'$ represents a collection of 
loops $s' \in S'(L)$, for which we must find a thin fatgraph $Y$ of the desired form.  
Our goal is to build a fatgraph which bounds $s' + t + \iota(t)$ for 
some $t \in S'(L)$.  Adding $\iota$-annuli will then immediately 
finish the construction.

If we have a collection of annuli and pants which has boundary 
$s' + t$, then the problem reduces to finding 
a collection of annuli and pants which has boundary of the form
$\iota(t) + u + \iota(u)$ for some $u \in S'(L)$.
We'll repeatedly apply this idea to simplify the problem by 
\emph{attaching pants}.  If we want to have boundary which contains 
a loop $\gamma$, and we find a pair of pants with 
boundary $\gamma + \alpha + \alpha'$, then now we need only 
find boundary containing $\iota(\alpha) + \iota(\alpha')$.  
In this case, we'll say that $\gamma$ and $\iota(\alpha) + \iota(\alpha')$ 
are \emph{pants equivalent}.

For this entire section, we will assume that our free group has rank $2$ and 
is generated by $a$ and $b$.  In \S~\ref{subsection:higher_rank} we explain the extra details
required to deal with free groups of higher rank.

Our strategy will be to start with $s'$ and attach (many) pairs 
of pants which put all the loops in $s'$ into a nice form.  Then we 
attach more pants to further simplify the loops, and so on, 
eventually reducing to a case that is simple enough to handle by hand.
A \emph{run} in a loop is a maximal subword of the form $a^p$ or 
$b^p$ for some integer power $p \ne 0$.  Note that any loop contains 
an even number of runs.  We first reduce to the case that every loop
has at most $4$ runs, then to the case that every loop has $4$ runs in a 
nice arrangement, then $2$ runs, which we address directly.

For clarity, we separate the simplification into lemmas. 

\begin{lemma}
\label{lem:pants_to_4_runs}
Any loop is pants-equivalent to a collection of loops with at most $4$ runs.
\end{lemma}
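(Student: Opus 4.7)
The plan is to argue by induction on the number of runs: assuming $\gamma$ has $2m\ge 6$ runs, I attach a single good pair of pants with $\gamma$ as one boundary loop to replace the problem of bounding $\gamma$ by the problem of bounding two new loops $\alpha,\alpha'$ each with strictly fewer runs. Iterating this operation on whichever loop in the current collection has the most runs, the maximum run count strictly decreases at each step, so after finitely many pants attachments every remaining loop has at most $4$ runs.

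For the construction, recall that in rank $2$ a good pair of pants is a trivalent graph with three edges $e_1,e_2,e_3$ of length $L/2$ between two vertices, with boundary loops (as cyclic words of length $L$) of the form $e_1e_2^{-1}$, $e_2e_3^{-1}$, $e_3e_1^{-1}$. I would cyclically split $\gamma=u\cdot v$ with $|u|=|v|=L/2$ and set $e_1=u$, $e_2=v^{-1}$, leaving $e_3$ as a free reduced word of length $L/2$; the other two boundary loops become $\alpha=v^{-1}e_3^{-1}$ and $\alpha'=e_3u^{-1}$. I would choose the two antipodal split points of $\gamma$ to fall between consecutive runs (so that neither $u$ nor $v$ cuts a run) and balanced so that each of $u,v$ carries at most $m+1$ runs; this is combinatorially available when $2m\ge 6$ and $L$ is much larger than $2m$. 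For the free edge, take $e_3$ to be a two-run word $e_3=x^py^{L/2-p}$; the choice of the first letter of $e_3$, its last letter, and the breakpoint $p$ gives enough freedom to guarantee cyclic reducedness of both $\alpha$ and $\alpha'$, since each of the four junction conditions is a constraint on a single letter of $e_3$ and they can be satisfied independently.

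With this setup the run count is straightforward: $\alpha'=e_3u^{-1}$ has at most (runs of $e_3$)$\,+\,$(runs of $u^{-1}$)$\,\le 2+(m+1)=m+3$ runs, and similarly for $\alpha$; since the number of runs of a cyclic reduced word is always even, each of $\alpha,\alpha'$ in fact has at most $m+2$ runs, which is strictly less than $2m$ whenever $m\ge 3$. The main obstacle is checking that the balancing and the choice of $e_3$ can always be arranged simultaneously; degenerate configurations in which $\gamma$ has very unevenly distributed run lengths (so that no nicely balanced split between runs exists) will require allowing the split to pass through a single long run, adding at most one or two to the run counts of $u$ or $v$. These edge cases are still absorbed by the $m+2$ bound when $2m\ge 6$, but a careful case analysis, keeping track of the parity of runs throughout, is needed to close the argument.
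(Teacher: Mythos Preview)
Your overall strategy matches the paper's: attach a single pair of pants along a diameter of $\gamma$ so as to split it into two loops each with roughly half as many runs, and iterate. The gap is precisely the step you flag as ``the main obstacle''. You assert that the two antipodal cut points can be chosen to fall between runs and to balance the run counts, calling this ``combinatorially available when $2m\ge 6$ and $L$ is much larger than $2m$''. This is not true in general: the two cut points are forced to be exactly $L/2$ apart, and for generic run lengths neither will land at a run boundary, let alone both. Your fallback of cutting through ``a single long run'' is also not justified --- both antipodal points may lie strictly inside runs --- and if each cut adds a run to its side you can end up with $m+2$ runs in $u$, hence up to $m+4$ in $\alpha'$, which is \emph{not} less than $2m$ for $m=3,4$. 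So the induction does not close as written, and the concluding sentence (``a careful case analysis\ldots is needed'') is an admission that the proof is incomplete rather than a proof.

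The paper resolves exactly this point with an intermediate value argument. One does not try to cut at run boundaries at all; instead, for the diameter $d_i$ starting at vertex $i$, let $x_i$ and $y_i$ count the runs on each side. As $i$ increments, each of $x_i,y_i$ changes by at most $1$, so the piecewise-linear interpolations of $x$ and $y$ must cross, giving an $i$ with $|x_i-y_i|\le 1$. At such a position the diameter necessarily meets a pair of \emph{matched} runs (runs opposite each other in the cyclic list). One then slides $i$ down until an endpoint of $d_i$ reaches the boundary of one of those matched runs, and labels the diameter with a single generator (or half-and-half in one degenerate configuration) to guarantee each resulting loop has at most $r/2+2$ runs. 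This replaces your unproven balancing assertion with a two-line continuity argument and a short case check on how the diameter label interacts with the adjacent runs.
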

\begin{proof}
To begin, we show how to attach a pair of pants to a loop which 
produces two loops, each of which has fewer runs than the initial 
loop.  This method works whenever the number of runs is more than $4$, 
so it reduces the loops in $s'$ to a collection of loops 
with at most $4$ runs.  Let us be given a loop $\gamma$.  
The easiest way to visualize attaching a pair of pants is to simply 
draw a diameter $d$ on $\gamma$ between two antipodal vertices.  
Labeling the diameter $d$ produces a pair of pants attached to $\gamma$.  
Note that we must be careful to label $d$ compatibly 
with the labels adjacent to the vertices to which we attach $d$, 
so that the vertices do not fold. 
See Figure~\ref{fig:attach_diameter}.

\begin{figure}[ht]
\begin{center}
\includegraphics[scale=1.5]{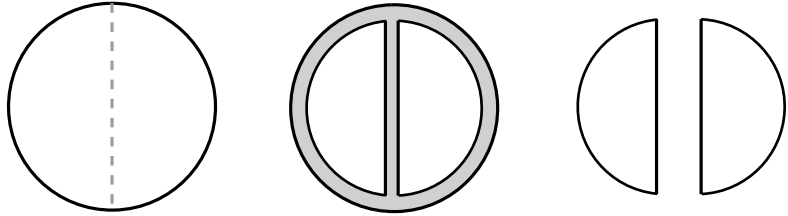}
\caption{Attaching a diameter to a loop to form a pair of pants.  
The pair of loops on the right is pants equivalent to the loop on the left.}
\label{fig:attach_diameter}
\end{center}
\end{figure}

For concreteness, let us number the vertices of $\gamma$ by 
$0$, \dots, $L-1$, and we let $d_i$ be the (oriented) 
diameter with initial vertex $i$ (and thus terminal vertex
$(i+L/2)\bmod L$).  Each diameter $d_i$ divides $\gamma$ into 
two pieces.  Let $x_i$ be the number of runs in the non-cyclic 
subword of $\gamma$ starting at index $i$ and of length $L/2$; 
that is, the number of runs in the word to the ``right'' of $d_i$.  
Similarly, let $y_i$ be the number of runs to the left.  
See Figure~\ref{fig:change_by_one}.  Let $r$ be the number 
of runs in $\gamma$.  Note that $x_i+y_i$ may be greater than $r$.  
Specifically, $r \le x_i + y_i \le r+2$.  The important 
feature of these numbers is that $|x_i - x_{i+1}| \le 1$ and 
$|y_i- y_{i+1}| \le 1$.  This is easily seen by considering 
the combinatorial possibilities that occur as we rotate the starting 
point $i$ around the loop $\gamma$. 
We are particularly interested in \emph{matched runs}, which are 
runs separated in either direction by the same number of other runs.  
That is, matched runs are ``directly across'' from one another 
in the list of runs (we use scare quotes to emphasize that matched runs are not 
antipodal in the same sense that ``antipodal vertices'' are).

\begin{figure}[ht]
\begin{center}
\includegraphics[scale=0.8]{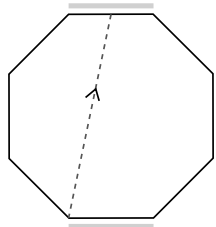}
\caption{Moving the diameter one position changes $x_i$ and $y_i$ 
by at most $1$.  For the marked diameter, we have $x_i = 5$ and $y_i=4$.
A pair of matched runs is marked in grey.}
\label{fig:change_by_one}
\end{center}
\end{figure}

The ``functions'' $x_i$ and $y_i$ can be interpolated to piecewise linear
functions on the circle; by applying the intermediate value theorem to these
interpolations, we deduce that there is some point at which 
the interpolated graphs intersect.  This can happen
either at some value of $i$, in which case $x_i = y_i$, 
or between two values $i$ and $i+1$, but 
by the discussion above, in this latter case 
$|x_i - y_i| \le 1$ and $|x_{i+1}- y_{i+1}|\le 1$.  
In either case, $d_i$ must intersect two matched runs $R$ and $R'$, 
perhaps on the boundaries of the runs.  Now decrease $i$ until one of the 
ends of $d_i$ lies on the boundary of $R$ or $R'$.  This puts 
$d_i$ in to one of two combinatorial configurations, up to 
rotation and symmetry.  See 
Figure~\ref{fig:matched_runs}.  Note that the configuration 
on the right cannot occur at the intersection point, since $|x_i-y_i|=2$. 

\begin{figure}[ht]
\begin{center}
\includegraphics[scale=0.8]{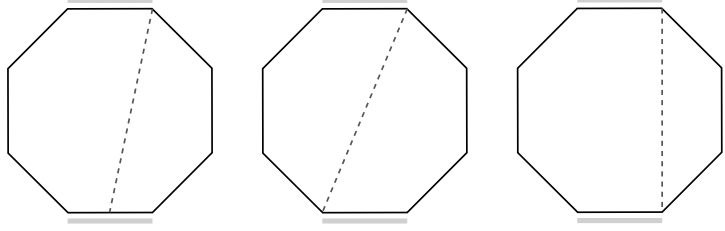}
\caption{Possible configurations of $d_i$ with respect to the matched 
runs $R$ and $R'$.  Up to rotation and symmetry, there are two.  
Note the configuration on the right cannot occur.}
\label{fig:matched_runs}
\end{center}
\end{figure}

We handle the two cases separately.  First, the more generic case illustrated 
in Figure~\ref{fig:matched_runs} on the left.  Here we label the diameter 
entirely with the generator which is \emph{not} the one labeling the 
bottom run, and in such a way as to minimize the number of runs in the 
resulting two loops.  
Figure~\ref{fig:cut_to_four}, left, illustrates this.  
The sign of the labels on the diameter depends on the 
signs and orders of the generators around the endpoints, but the 
picture is equivalent.  Notice that the 
number of runs in each of the resulting loops is at most $r/2 + 2$.  

\begin{figure}[ht]
\begin{center}
\labellist
\small
\pinlabel {$a$}  at 47 -5
 \pinlabel {$b$}  at 86 11
 \pinlabel {$a$}  at 103 47
 \pinlabel {$b$}  at 89 86
 \pinlabel {$a$}  at 49 101
 \pinlabel {$b$}  at 9 86
 \pinlabel {$a$}  at -4 51
 \pinlabel {$b$}  at 10 13
 \pinlabel {$A$}  at 56 9
 \pinlabel {$B$}  at 75 21
 \pinlabel {$A$}  at 88 46
 \pinlabel {$B$}  at 78 74
 \pinlabel {$A$}  at 48 87
 \pinlabel {$B$}  at 22 76
 \pinlabel {$A$}  at 10 50
 \pinlabel {$B$}  at 19 23
 \pinlabel {$A$}  at 36 9
 \pinlabel {$b$}  at 47 48
 \pinlabel {$B$}  at 61 43
 
 \pinlabel {$a$}  at 192 -4
 \pinlabel {$b$}  at 227 11
 \pinlabel {$a$}  at 245 48
 \pinlabel {$b$}  at 232 87
 \pinlabel {$a$}  at 193 103
 \pinlabel {$b$}  at 154 88
 \pinlabel {$a$}  at 140 50
 \pinlabel {$b$}  at 153 15
 \pinlabel {$A$}  at 192 9
 \pinlabel {$B$}  at 220 21
 \pinlabel {$A$}  at 233 48
 \pinlabel {$B$}  at 221 76
 \pinlabel {$A$}  at 191 88
 \pinlabel {$B$}  at 166 77
 \pinlabel {$A$}  at 154 49
 \pinlabel {$B$}  at 163 24
 \pinlabel {$a$}  at 179 35
 \pinlabel {$A$}  at 191 27
 \pinlabel {$b$}  at 193 69
 \pinlabel {$B$}  at 207 63
\endlabellist
\includegraphics{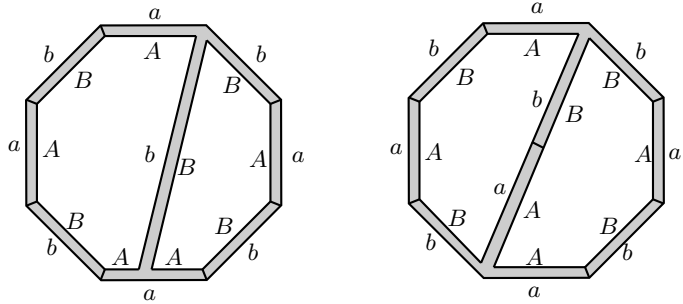}
\caption{Labeling the diameter to reduce the number of runs. Each label 
represents a potentially long run of that generator.}
\label{fig:cut_to_four}
\end{center}
\end{figure}

In the non-generic case illustrated Figure~\ref{fig:matched_runs} in the middle, 
we label half of the diameter with one generator and the other half with 
the other, in a way which is compatible with the top and bottom labels.  
See Figure~\ref{fig:cut_to_four}, right.
In certain cases, it is possible to label the entire diameter with 
a single generator, and this reduces the number of runs still further, but 
we have illustrated the worst situation.  We therefore compute again that the 
number of runs in each of the resulting loops is $r/2 + 2$.

As long as $r/2 + 2 < r$, or $r>4$, this will produce two loops of 
strictly smaller length.  Repeatedly attaching pants resolves our collection 
$s'$ into a new collection of loops, all of which have at most $4$ runs.
\end{proof}

We have shown that an arbitrary collection of loops is pants 
equivalent to a collection of loops with at most $4$ runs.  In order 
to further reduce this to $2$ runs, we first need to make the 
$4$-run loops balanced.  A $4$-run loop is \emph{balanced}
if there is a pair of diameters $d_a$ and $d_b$ at right angles 
(with all endpoints spaced exactly $L/4$ apart)
such that $d_a$ starts touching one $a$ run and ends touching 
the other, and similarly for $d_b$ with the $b$ runs.  Here 
\emph{touching} a run means that the vertex on which the diameter 
starts or ends lies between two letters, at least one of which lies 
within the run.  See Figure~\ref{fig:balanced_runs}.

\begin{figure}[ht]
\begin{center}
\includegraphics[scale=0.8]{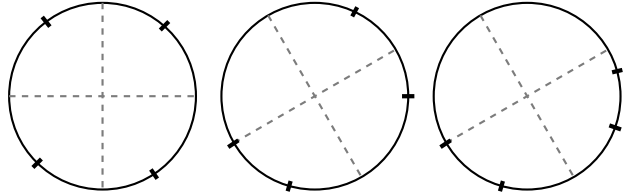}
\caption{Examples of two balanced loops (left) and an unbalanced loop (right)}
\label{fig:balanced_runs}
\end{center}
\end{figure}

\begin{lemma}
\label{lem:4_runs_to_balanced}
Any $4$-run loop is pants equivalent to a collection of balanced loops 
and $2$-run loops.
\end{lemma}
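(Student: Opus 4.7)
The plan is to characterize the balance condition explicitly in terms of the run lengths and then, for each unbalanced configuration, to exhibit a specific pants attachment that either balances the loop or reduces it to $2$-run pieces. Parametrize a $4$-run loop as $\gamma = a^{\epsilon_1 p} b^{\epsilon_2 q} a^{\epsilon_3 r} b^{\epsilon_4 s}$ with $p + q + r + s = L$ and $\epsilon_i \in \{\pm 1\}$. The four run-boundaries sit at cyclic positions $0, p, p+q, p+q+r$. The loop is balanced precisely when there is an angle $\alpha$ for which the perpendicular-antipodal quadruple $\alpha, \alpha + L/4, \alpha + L/2, \alpha + 3L/4$ touches an $a$-run, $b$-run, $a$-run, $b$-run in order. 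Unpacking this into inequalities on $(p,q,r,s)$ tells us that unbalance forces either some run to be too long (greater than roughly $L/4$ by a definite amount relative to its opposite) or a pair of adjacent runs to crowd more than $L/2$ of the loop into one half.

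Next, I would handle the unbalanced cases by attaching a pair of pants whose diameter $d$ is an antipodal chord of length $L/2$, with a label chosen to respect reducedness at both endpoints. The key idea is: if one run, say $a^{\epsilon_1 p}$, is significantly longer than $L/4$, choose $d$'s endpoints so that one endpoint lies at or near the beginning of this long run and the other at the antipodal position along $\gamma$. Then one of the two resulting length-$L$ loops is (up to the labels on $d$) built largely from the excess of the long run, which I would arrange to be a $2$-run loop by picking $d$'s label to consist entirely of $a$'s except for the minimal correction needed to avoid cyclic cancellation at the seams; the complementary loop, consisting of the rest of $\gamma$ concatenated with $d^{-1}$, will have a more even distribution of runs and thereby become balanced, or at least closer to balanced, after the cut.

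To organize the argument, I would reduce the problem to a small number of canonical cases based on (i) the sign pattern $(\epsilon_1, \epsilon_2, \epsilon_3, \epsilon_4)$ up to the natural symmetries (cyclic rotation, reversal, the $a \leftrightarrow b$ swap), and (ii) which of $p, q, r, s$ exceeds $L/4$. In each case, I would explicitly describe the endpoints of $d$ and its labeling, verify that both output loops are cyclically reduced, and show that one output is $2$-run while the other is balanced, or that a suitable complexity measure (e.g., $\max(p,q,r,s)$ or $\max - \min$) strictly decreases. A finite induction on this measure then finishes the argument.

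The main obstacle will be the bookkeeping for the diameter labels: because the diameter must have length exactly $L/2$ and the endpoints must be antipodal on $\gamma$, there is little flexibility in the cut, so the labeling of $d$ has to be negotiated against the adjacent letters of $\gamma$ at both endpoints simultaneously. Ensuring simultaneously that (a) both resulting loops are cyclically reduced, (b) one of them is genuinely $2$-run rather than spuriously $4$-run, and (c) the complementary loop actually becomes balanced (in the strict antipodal-perpendicular sense) rather than merely having better-looking run lengths, will require the delicate case analysis that makes this kind of lemma notoriously tedious. If any case fails to admit a single-cut resolution, I would fall back on iterating the previous lemma (Lemma~\ref{lem:pants_to_4_runs}) or making a two-step pants attachment.
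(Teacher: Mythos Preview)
Your proposal has a genuine gap: the single-diameter cut you describe is not strong enough to balance a generic $4$-run loop, and the complexity measures you suggest ($\max(p,q,r,s)$ or $\max-\min$) need not decrease under such a cut. When you attach a diameter labeled by a single generator, you are replacing half of the loop by $L/2$ copies of that generator; this does reduce the \emph{generator inequity} $\#a-\#b$ when one generator is sparse (and the paper uses exactly this move for that purpose), but once the two $b$-runs are of moderate size the obstruction to balance is not a run being too long---it is that the two $b$-runs sit at the wrong \emph{positions} relative to each other around the circle. No single diameter cut repositions runs; it only replaces mass.

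The missing idea is what the paper calls a \emph{triangle move}: a specific pants attachment at the run $b^x$ that takes $a^{e_1}b^{x}a^{e_2}b^{x'}$ to (a balanced byproduct plus) $a^{e_1+L/2-x}b^{x}a^{e_2-(L/2-x)}b^{x'}$. The point is that this move leaves both $b$-run lengths unchanged while shifting exactly $L/2-x$ letters from one $a$-run to the other, thereby sliding the initial endpoint of the $x$-run around the circle by a controlled amount $s=L/2-x$. The paper then computes a target interval of length $t$ for this endpoint (the set of positions at which the loop is balanced), and shows $s\le t$ once the generator inequity has been reduced; so finitely many triangle moves land the endpoint in the target. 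Your outline never produces a move with this shifting property, and your fallback (``iterate the previous lemma or make a two-step pants attachment'') does not supply one either. Without it, the induction you sketch does not terminate.
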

\begin{proof}
Suppose we are given a $4$-run loop.  Without loss of 
generality, let us suppose there are at least as many $a$'s as $b$'s, and 
let $G = \#a - \#b$ be the \emph{generator inequity}, 
recording how many more $a$'s than $b$'s there are.
Let $x$ and $x'$ be the number of $b$'s in the longer and 
shorter $b$ runs, respectively.  Abusing notation, 
we'll also refer to the runs themselves as $x$ and $x'$.  Note that 
the $a$ or $b$ runs may have  negative exponents, so they are actually 
runs of $A$ or $B$.  For simplicity, we'll use the ``positive'' notation.

First, let us eliminate the case that there are very few $b$'s.  
Suppose that $x + 2x' < L/2$.  Consider the two diameters starting at the ends of $x$.  
There are two cases: if $x=x'$ and 
the runs are exactly antipodal, then drop a diameter between the middles of $x$ and 
$x'$; the diameter at right angles will touch 
both $a$ runs, and the loop will be balanced, as desired.
Otherwise, one of the diameters misses $x'$ entirely, and we 
can cut to produce a $2$-run loop and a loop whose generator 
inequity is strictly smaller (the roles of $a$ and $b$ are 
reversed, and the inequity becomes $2x' < 2(L/2 - x - x') = G$, i.e. 
smaller than the current inequity).  
See Figure~\ref{fig:reduce_generator_inequity}.

\begin{figure}[ht]
\labellist
\small
 \pinlabel {$a$}  at 56 -3
 \pinlabel {$b$}  at 87 69
 \pinlabel {$a$}  at 45 93
 \pinlabel {$b$}  at 6 76
 
 \pinlabel {$a$}  at 191 1
 \pinlabel {$b$}  at 210 71
 \pinlabel {$a$}  at 171 92
 \pinlabel {$b$}  at 129 74
 \pinlabel {$a$}  at 121 37
 \pinlabel {$A$}  at 186 15
 \pinlabel {$B$}  at 197 62
 \pinlabel {$A$}  at 171 78
 \pinlabel {$B$}  at 142 67
 \pinlabel {$A$}  at 134 42
 \pinlabel {$b$}  at 162 52
 \pinlabel {$B$}  at 172 43
\endlabellist
\includegraphics[scale=1]{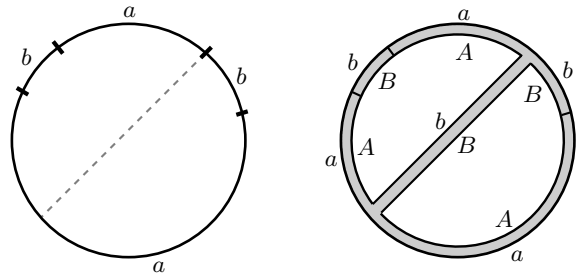}
\caption{If the $b$ runs are short, we can cut to reduce the 
generator inequity.}
\label{fig:reduce_generator_inequity}
\end{figure}

After repeatedly reducing the generator inequity, 
we may assume that $x + 2x' \ge L/2$. We conclude that 
$x \ge L/6$ and $x' \ge L/4 - x/2$, so $L/3 \le x+ x'$, and $G \le L/3$.
Our loop is now less degenerate, but it still might not be balanced.
We'd like for $x$ to be positioned opposite 
$x'$, as shown in Figure~\ref{fig:balanced_runs}, left, 
so that we can simply draw two diameters and be done.  However, 
the loop might look like Figure~\ref{fig:balanced_runs}, right, 
in which $x$ and $x'$ are too close.  
In order to remedy this, we introduce the 
\emph{triangle move at $x$} technique.

Algebraically, a triangle move at $x$ takes in a word $a^{e_1}b^{x}a^{e_2}b^{x'}$ 
(for clarity, assume all the exponents are positive) such that 
$x+e_2 > L/2$ and $x < L/2$ and builds a pair of pants with boundary
\[
a^{e_1}b^{x}a^{e_2}b^{x'} + A^{e_1 + L/2-x}B^{x}A^{e_2-(L/2-x)}B^{x'} 
 + a^{L/2-x}b^{x}A^{L/2-x}B^{x}
\]
The notation obscures the function of a triangle move, which is shown 
in Figure~\ref{fig:triangle_move} and is as follows:
it produces one balanced loop (opposite runs have the same 
length), and another 
loop with the same run sizes as the original one, except that 
$L/2-x$ of the $a$'s in the top run have been shifted down 
to the bottom run.  The signs of the generators 
may change as they shift, but we are only concerned with the lengths 
at this point.  This is the critical feature of the triangle 
moves --- shifting generators from one run to the other without disturbing 
anything else.

\begin{figure}[ht]
\labellist
\small
\pinlabel {$a$}  at 44 93
 \pinlabel {$b$}  at 95 125
 \pinlabel {$a$}  at 50 158
 \pinlabel {$b$}  at -2 126
 
 \pinlabel {$a$}  at 152 92
 \pinlabel {$b$}  at 205 124
 \pinlabel {$a$}  at 160 158
 \pinlabel {$b$}  at 105 127
 
 \pinlabel {$A$}  at 150 106
 \pinlabel {$B$}  at 190 136
 \pinlabel {$A$}  at 175 144
 \pinlabel {$A$}  at 127 144
 \pinlabel {$B$}  at 119 126
 
 \pinlabel {$A$}  at 167 117
 \pinlabel {$a$}  at 177 128
 \pinlabel {$B$}  at 153 128
 \pinlabel {$b$}  at 162 141
 
 \pinlabel {$A$}  at 258 98
 \pinlabel {$A$}  at 293 114
 \pinlabel {$B$}  at 265 141
 \pinlabel {$A$}  at 235 155
 \pinlabel {$B$}  at 218 128
 
 \pinlabel {$a$}  at 315 116
 \pinlabel {$B$}  at 335 132
 \pinlabel {$A$}  at 300 160
 \pinlabel {$b$}  at 284 146

 \pinlabel {$x'$} at 65 21
 \pinlabel {$x$} at 122 1
 \pinlabel {$x'$} at 175 20
 \pinlabel {$x$} at 234 69

 \endlabellist
\includegraphics[scale=1]{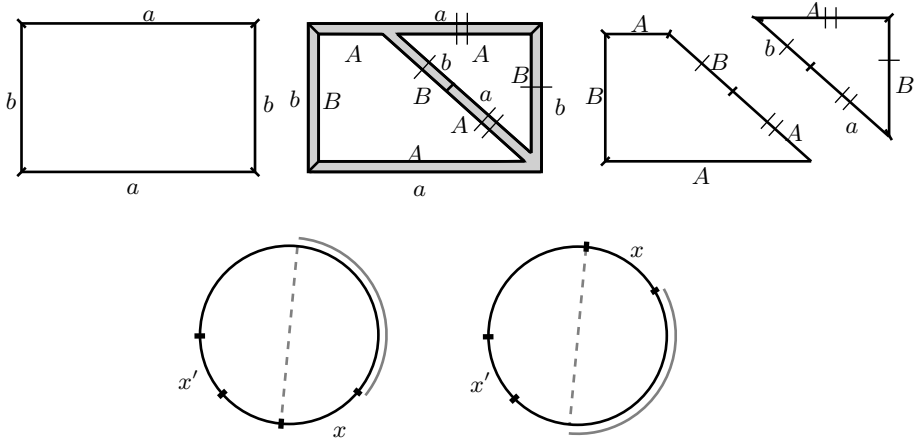}
\caption{A triangle move shown explicitly, top, and 
schematically, bottom.  The hash marks in the top picture 
indicate segments of equal length.  Note the output 
is a balanced loop (opposite runs have exactly the same length), 
plus a loop in which $L/2-x$ 
of the $a$'s on the top run have been shifted down.  The schematic 
picture shows the effective result: a triangle move at $x$ 
on the left loop produces balanced loop (not shown), plus the 
loop on the right; note the runs have been shifted so that 
the right loop is now balanced.}
\label{fig:triangle_move}
\end{figure}

In the Figure~\ref{fig:triangle_move} schematic, we are able to 
perform a triangle move at $x$ in such a way that the loop becomes balanced.  
We will show that this can always be done.  We need two things 
to happen simultaneously: $x$ and $x'$ must be opposite enough 
so that there is a diameter between them, and the diameter at 
right angles must also touch the $a$ runs.

At this point, we split the argument into two cases.  First, assume 
that $x \le L/4$.  In this case, $x$ and $x'$ are short enough 
that if we can find a diameter which touches both $x$ and $x'$, then 
the diameter at right angles automatically touches the $a$ runs, so 
the loop will be balanced.  
Consider the run $x'$: it casts a ``shadow'' directly 
opposite it so that if any part of $x$ touches the shadow, we 
succeed in placing a diameter between the runs.  Looking at 
the initial endpoint of $x$, we must place this endpoint 
in the \emph{target region}, which we define to be the segment 
of size $x+x'$ ending directly antipodal to the endpoint of $x'$. 
See Figure~\ref{fig:balancing_loop_small_x}.

\begin{figure}[ht]
\labellist
\small\hair 2pt
 \pinlabel {$x'$} at 7 8
 \pinlabel {$x$} at 87 17
\endlabellist
\includegraphics[scale=1]{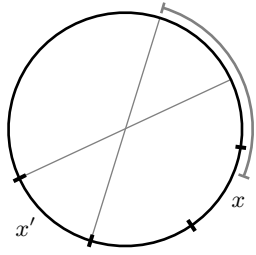}
\caption{The run $x'$ casts a shadow  directly opposite itself.
In the case that $x \le L/4$, if we succeed in shifting $x$ so it 
touches this shadow, the loop
will be balanced.  In other words, we need to place the initial 
endpoint of $x$ inside the target region, shown in gray on the outside
of the loop.}
\label{fig:balancing_loop_small_x}
\end{figure}

Let $t$ be the size of the target region, so $t = x + x'$.
Doing a triangle move at $x$ moves the initial endpoint of $x$ 
by the \emph{shift size}, which we denote $s$.  Recall that 
$s = L/2 - x$.  If we can show that $s \le t$, then obviously we can 
shift $x$ until it lies within the target region.  Also recall 
that we reduced the generator inequity, so we have $x \ge L/6$ and 
$x+x' \ge L/3$.  Putting these together, we have 
\[
s = \frac{L}{2} - x \le \frac{L}{3} \le x + x' = t.
\]
Therefore we do indeed have $s \le t$ and we can balance the loop.
This finishes the case that $x \le L/4$.

For the other case, assume $x \ge L/4$.  We will use the 
same technique, shifting $x$ until the loop is balanced.
Here we must be careful: if $x \ge L/4$ it is no longer obvious that there exists 
a diameter at right angles which exhibits the loop as balanced, 
so we must take this into account when setting the size of the 
target region.  In this case, the target region starts 
exactly $L/4$ after the initial endpoint of $x'$ and ends 
exactly $L/4+x$ before the final endpoint of $x'$.  
Figure~\ref{fig:balancing_loop_big_x} shows the target region.

\begin{figure}[ht]
\labellist
\small\hair 2pt
 \pinlabel {$x'$} at -2 32
 \pinlabel {$x$} at 38 85
\endlabellist
\includegraphics[scale=1]{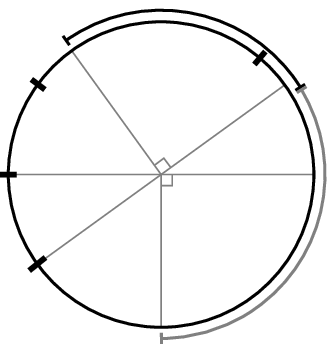}
\caption{In the case that $x \ge L/4$, we need to be 
careful about the size of the target region (shown in gray).  
The initial endpoint of $x$ can be placed anywhere in the target 
region, which starts 
exactly $L/4$ after the initial endpoint of $x'$ and ends 
exactly $L/4+x$ before the final endpoint of $x'$.  A segment 
of length $x$ is shown adjacent to the target region 
for illustrative purposes.}
\label{fig:balancing_loop_big_x}
\end{figure}

Computing the size, we find in this case that $t = L/2 + x' - x$.  
Again, the shift size is $s = L/2 -x$, so we immediately get
\[
s = \frac{L}{2}-x < \frac{L}{2}-x + x' = t.
\]
This completes the proof in the case that $x \ge L/4$ and thus 
the proof of the lemma.
\end{proof}

At this point, we are left with a collection of balanced $4$-run 
loops.  For reducing the loop, even nicer than balanced is a loop 
with a \emph{diameter between corners}.  A diameter between 
corners is a diameter which starts between an $a$ and 
$b$ run and ends between the other $a$ and other $b$ runs.
See Figure~\ref{fig:diameter_between_corners}.

\begin{figure}[ht]
\includegraphics[scale=1]{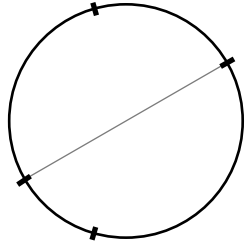}
\caption{This loop has a diameter between corners.}
\label{fig:diameter_between_corners}
\end{figure}

\begin{lemma}
\label{lem:balanced_to_diameter between corners}
Any balanced loop is pants equivalent to a collection of $2$-run loops and 
loops with a diameter between corners.
\end{lemma}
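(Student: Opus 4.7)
The plan is to follow the blueprint of Lemma~\ref{lem:4_runs_to_balanced}, attaching pants along carefully chosen diameters and using triangle moves to shift letters between runs until the loop either splits into two-run pieces or acquires a diameter between corners. Write the balanced $4$-run loop as $a^{e_1}b^{f_1}a^{e_2}b^{f_2}$ (with signs absorbed into the generator labels), and let $d_a, d_b$ be the perpendicular diameters witnessing balance. Their four endpoints partition the loop into four arcs of length $L/4$, each containing exactly one corner. I record four ``corner offsets'' measuring the signed distances from each corner to the nearest diameter endpoint; the loop admits a diameter between corners precisely when two opposite offsets coincide. The goal is to enforce this coincidence or, failing that, to cut.

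First I would try to force equality of two opposite offsets by a triangle move at the longer of the two $b$-runs, whose length I call $x$. As in Figure~\ref{fig:triangle_move}, such a triangle move yields two output loops: an auxiliary balanced $4$-run loop (which is already of the form required by the conclusion, and is emitted directly), and a modified copy of the original loop in which $s := L/2 - x$ letters have been shifted between the two $a$-runs, translating one corner by $s$ around the cycle without changing any run sizes. The task is then to show that the ``target region'' --- the set of shifted positions that realize the diameter-between-corners condition --- has length $t \ge s$. As in Lemma~\ref{lem:4_runs_to_balanced}, I would split into the cases $x \le L/4$ and $x \ge L/4$: in the first, the balanced property combined with the generator-inequity reduction already in force gives $x + x' \ge L/3$, yielding $s = L/2 - x \le L/3 \le t$; in the second, $t$ shrinks by the excess $x - L/4$, but $s$ shrinks in lockstep, and $s \le t$ again holds by a short direct computation.

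The main obstacle I anticipate is the degenerate regime in which one of the short runs is so short that $s > t$ and no triangle move can hit the target. Here I would argue in parallel with the ``short $b$-run'' sub-argument of Lemma~\ref{lem:4_runs_to_balanced}: the loop admits a diameter with both endpoints strictly inside long $a$-runs, and labeling this diameter by a pure power of $b$ (sign chosen to be compatible with the adjacent letters so as to avoid folding at the endpoints) cuts the loop into two two-run loops, reaching the alternative conclusion of the lemma. Termination of the iteration is automatic since each triangle move strictly decreases the minimum corner offset while emitting only balanced or two-run loops along the way. The bulk of the technical work will be the inequality $s \le t$ in its various subcases and the careful sign bookkeeping needed to certify that the cutting diameter in the degenerate case is folded.
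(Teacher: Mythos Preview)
Your proposal has a genuine gap: the ``target region'' analogy with Lemma~\ref{lem:4_runs_to_balanced} breaks down because \emph{balanced} is an interval condition (a range of diameter positions works), whereas \emph{diameter between corners} is an exact point condition (two specific corners must be precisely antipodal, i.e.\ $e_1 + x = L/2$ or $x + e_2 = L/2$). A triangle move at $b^x$ shifts $e_1$ and $e_2$ by the fixed amount $s = L/2 - x$ while leaving $x,x'$ unchanged, so iterating can only hit values of $e_1$ congruent to the initial $e_1$ modulo $s$. There is no interval of length $t$ to land in, and your inequality $s \le t$ is therefore meaningless here. Your ``degenerate regime'' fallback is also incorrect as written: cutting a $4$-run loop along a diameter whose endpoints lie strictly inside the two $a$-runs and labeling with $b$'s produces two loops of the form $a^{?}b^{?}a^{?}b^{L/2}$, which still have four runs, not two.

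The paper's approach is quite different and avoids this obstruction entirely. Rather than iterating triangle moves toward a zero-measure target, it first performs two explicit cuts: one along the $a$-diameter $d_a$ (labeled with $b$'s) to produce loops of the form $b^{L/2}a^{?}b^{?}a^{?}$, then one from the midpoint of this new $b^{L/2}$ run (labeled with $a$'s; the balanced hypothesis guarantees the far endpoint touches the other $b$-run) to reach the specific shape $b^{L/4}a^{L/2}b^{?}a^{?}$. Only then is a single triangle move applied, at the $b^{L/4}$ run, with shift size exactly $L/4$; this puts corners at positions $0$ and $L/2$, creating the diameter between corners in one stroke. The point is that the two preliminary cuts manufacture a situation in which the shift size divides the required displacement exactly.
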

\begin{proof}
The proof of this lemma is essentially contained in the moves 
shown in Figure~\ref{fig:quartering}.  Given a balanced loop, 
there exist diameters at right angles with ends in opposite runs.
First, attach a pair of pants along the diameter between the $a$ runs 
by labeling the diameter entirely with $b$'s.  If one of the 
ends of this diameter touches a $b$ run, it is necessary to 
be careful about the labels to ensure there is no folding.  
If both ends touch $b$ runs, the loop already has a diameter between 
corners and we are done.

\begin{figure}[ht]
\labellist
\small
 \pinlabel {$a$}  at 44 85
 \pinlabel {$b$}  at 94 134
 \pinlabel {$a$}  at 44 184
 \pinlabel {$b$}  at -3 136
 
 \pinlabel {$a$}  at 132 91
 \pinlabel {$a$}  at 170 90
 \pinlabel {$b$}  at 203 135
 \pinlabel {$a$}  at 175 180
 \pinlabel {$a$}  at 137 182
 \pinlabel {$b$}  at 105 147
 \pinlabel {$A$}  at 165 105
 \pinlabel {$B$}  at 186 135
 \pinlabel {$A$}  at 168 167
 \pinlabel {$A$}  at 138 168
 \pinlabel {$B$}  at 117 138
 \pinlabel {$A$}  at 138 104
 \pinlabel {$b$}  at 144 136
 \pinlabel {$B$}  at 160 137
 
 \pinlabel {$A$}  at 253 95
 \pinlabel {$b$}  at 274 136
 \pinlabel {$A$}  at 254 178
 \pinlabel {$B$}  at 224 138
 
 \pinlabel {$A$}  at 327 95
 \pinlabel {$b$}  at 357 109
 \pinlabel {$b$}  at 357 162
 \pinlabel {$A$}  at 325 178
 \pinlabel {$B$}  at 301 157
 \pinlabel {$B$}  at 299 118
 \pinlabel {$a$}  at 331 106
 \pinlabel {$B$}  at 345 118
 \pinlabel {$B$}  at 343 157
 \pinlabel {$a$}  at 330 166
 \pinlabel {$b$}  at 314 152
 \pinlabel {$b$}  at 313 122
 \pinlabel {$a$}  at 330 131
 \pinlabel {$A$}  at 329 143
 
 \pinlabel {$a$}  at 42 -4
 \pinlabel {$B$}  at 95 28
 \pinlabel {$a$}  at 51 64
 \pinlabel {$b$}  at -3 31
 
 \pinlabel {$a$}  at 168 -2
 \pinlabel {$B$}  at 218 36
 \pinlabel {$a$}  at 186 67
 \pinlabel {$a$}  at 145 66
 \pinlabel {$b$}  at 119 33
 \pinlabel {$A$}  at 166 13
 \pinlabel {$b$}  at 205 35
 \pinlabel {$A$}  at 192 50
 \pinlabel {$A$}  at 146 50
 \pinlabel {$B$}  at 133 32
 \pinlabel {$A$}  at 189 18
 \pinlabel {$a$}  at 197 29
 \pinlabel {$B$}  at 168 39
 \pinlabel {$b$}  at 180 49

 \pinlabel {$A$}  at 275 4
 \pinlabel {$A$}  at 312 26
 \pinlabel {$B$}  at 288 49
 \pinlabel {$A$}  at 256 63
 \pinlabel {$B$}  at 235 34
 
 \pinlabel {$a$}  at 335 27
 \pinlabel {$b$}  at 357 39
 \pinlabel {$A$}  at 332 67
 \pinlabel {$b$}  at 310 49
 
 \pinlabel {$L/4$}  at 81 31
 \pinlabel {$L/2$}  at 45 50

\endlabellist
\includegraphics[scale=0.9]{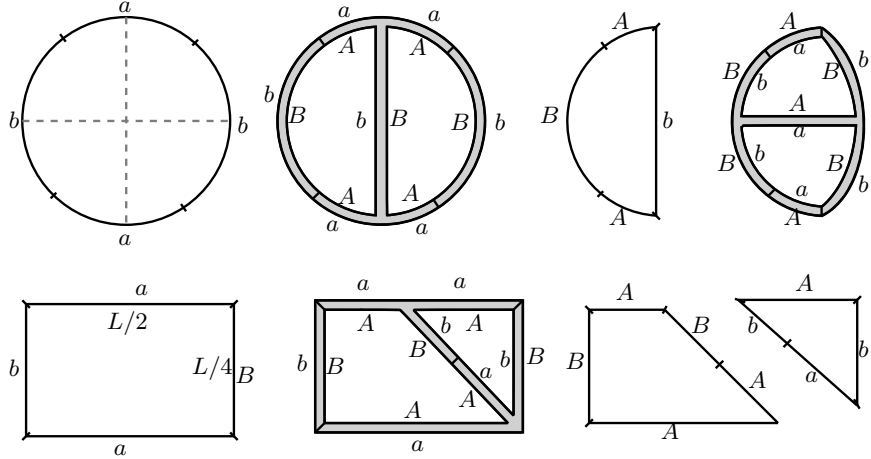}
\caption{Applying the moves described in Lemma~\ref{lem:balanced_to_diameter between corners} 
to reduce a balanced loop to a loop with a diameter between corners
Lengths are not to scale, and some lengths are labeled.  
The operation is read left to right, 
top to bottom.}
\label{fig:quartering}
\end{figure}

The result of attaching this pair of pants is two new loops.  They have the 
same pattern, so we'll focus on one of them.  It must be of the form 
(assuming positive exponents) $b^{L/2}a^{e_1}b^{e_2}a^{e_3}$, and furthermore, 
because the original loop was balanced, the diameter starting exactly in the 
middle of the $b^{L/2}$ run must touch the other $b$ run.  Attach a pair of 
pants by labeling this diameter entirely with $a$.  
If the diameter touches one of the $a$ runs, 
as always, we must label it to avoid folding.  This results in two new loops, 
which either have two runs, or have the form $b^{L/4}a^{L/2}b^{e_1}a^{e_2}$ 
(again assuming positive exponents and new exponent variables $e_1$ and $e_2$).

Again these two loops have the same pattern, so we focus on one of them.  
The final step is to do a triangle move at the $b^{L/4}$ run.  This shifts 
exactly $L/4$ of the $a$'s between runs, and results in 
a loop of the form $b^{L/4}a^{L/4}b^{e_1}a^{e_2+L/4}$.  Observe this has a diameter 
between corners.  The triangle move also produces a byproduct loop; 
we did not stress this earlier because we did not need it, 
but the byproduct loop has opposite runs of exactly the same length, so 
it has a diameter between corners.  This completes the proof.
\end{proof}

\begin{lemma}
\label{lem:diameter_between_corners_to_2_runs}
Any loop with a diameter between corners is pants equivalent to 
a collection of $2$-run loops.
\end{lemma}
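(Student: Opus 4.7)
The plan is to attach at most two good pants to $\gamma$ along diameters between corners, with each diameter labeled by a pure power of a single generator, and then verify that the resulting boundary loops are all $2$-run.

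Write $\gamma = X_1^{e_1}Y_1^{e_2}X_2^{e_3}Y_2^{e_4}$, where $X_i \in \{a,A\}$, $Y_i \in \{b,B\}$, and $e_1+e_2 = e_3+e_4 = L/2$. The diameter between corners splits $\gamma$ into the halves $\gamma_1 = X_1^{e_1}Y_1^{e_2}$ and $\gamma_2 = X_2^{e_3}Y_2^{e_4}$.

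First I would try a single pants whose third edge is labeled $w = Z^{L/2}$ for some letter $Z \in \{a,A,b,B\}$. Comparing first and last letters of $\alpha := \gamma_1 w^{-1}$ and $\beta := w\gamma_2$ (to check non-folding and run-merging) shows that both boundary loops come out $2$-run, with the pants folded, precisely when $Z \in \{X_1^{-1},Y_1^{-1}\} \cap \{X_2,Y_2\}$. This intersection is nonempty in every ``mixed-sign'' configuration of $\gamma$---that is, whenever $X_2 \ne X_1$ or $Y_2 \ne Y_1$---so in those cases a single pants already reduces $\gamma$ to two $2$-run loops.

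The hard case is the ``uniform-sign'' case where $X_1 = X_2 =: X$ and $Y_1 = Y_2 =: Y$, so $\gamma = X^{e_1}Y^{e_2}X^{e_3}Y^{e_4}$. Here I use two pants in sequence. First, attach a pants with label $w = X^{-e_3}Y^{-e_4}$: a direct computation gives a $2$-run loop $\alpha_1 = X^{e_1+e_3}Y^{e_2+e_4}$ (cyclically) together with a ``commutator-type'' $4$-run loop $\beta_1 = X^{-e_3}Y^{-e_4}X^{e_3}Y^{e_4}$. Crucially $\beta_1$ still has a diameter between corners, and its sign pattern is now mixed, since its two $a$-type runs $X^{-e_3}$ and $X^{e_3}$ have opposite signs. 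Applying the previous method to $\beta_1$, the intersection $\{X,Y\} \cap \{X,Y\}$ is nonempty and a second pants labeled $w' = X^{L/2}$ splits $\beta_1$ into the two $2$-run loops $X^{-L/2-e_3}Y^{-e_4}$ and $X^{L/2+e_3}Y^{e_4}$ (cyclically).

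The main technical care lies in the folding check across sign cases. The edge-length condition---that all three edges of each pants have length exactly $L/2$---is automatic from the diameter-between-corners hypothesis on $\gamma$ (and $\beta_1$). For folding I will verify at each new trivalent vertex that the three outgoing germs---two coming from the adjacent runs of $\gamma$ or $\beta_1$ and one from the first (or last reversed) letter of the pure-power label $w$---are pairwise distinct; this is precisely what the intersection condition on $Z$ is designed to arrange.
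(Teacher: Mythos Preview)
Your proof is correct and follows the same two-case split as the paper (opposite vs.\ same signs on the $a$-type runs), with the opposite-sign case handled identically by a pure-power diameter. In the same-sign case the paper invokes the earlier \emph{triangle move} machinery to reduce to the opposite-sign case, whereas you attach a pants directly along the diameter between corners with the two-run label $w = X^{-e_3}Y^{-e_4}$; both moves yield a $2$-run loop together with a commutator-type loop $X^{-e_3}Y^{-e_4}X^{e_3}Y^{e_4}$ (or its analogue), to which the opposite-sign step then applies. Your variant is slightly more self-contained since it does not appeal to the triangle-move construction from the previous lemma, while the paper's version reuses a tool already developed; the total cost---at most two good pants---is the same either way.
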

\begin{proof}
This step requires at most two triangle moves, and again, is essentially 
described by a picture, which is shown in Figure~\ref{fig:perfecting}.

\begin{figure}[ht]
\labellist
\small
\pinlabel $a$ at 55 100
\pinlabel $b$ at 86 145
\pinlabel $a$ at 34 176
\pinlabel $b$ at 6 134

\pinlabel $a$ at 161 105
\pinlabel $b$ at 186 161
\pinlabel $a$ at 150 179
\pinlabel $b$ at 111 131
\pinlabel $A$ at 152 119
\pinlabel $B$ at 171 160
\pinlabel $A$ at 159 165
\pinlabel $B$ at 127 132
\pinlabel $A$ at 150 130
\pinlabel $a$ at 159 144
\pinlabel $B$ at 131 143
\pinlabel $b$ at 138 157

\pinlabel $B$ at 215 132
\pinlabel $A$ at 274 99

\pinlabel $a$ at 293 128
\pinlabel $b$ at 250 153
\pinlabel $A$ at 264 176
\pinlabel $B$ at 314 146

 \pinlabel {$a$}  at 65 3
 \pinlabel {$b$}  at 98 45
 \pinlabel {$A$}  at 45 77
 \pinlabel {$b$}  at 17 30
 
 \pinlabel {$a$}  at 170 6
 \pinlabel {$b$}  at 200 49
 \pinlabel {$A$}  at 150 81                                                                                             
 \pinlabel {$b$}  at 122 33                                                                                             
 \pinlabel {$A$}  at 159 20                                                                                             
 \pinlabel {$B$}  at 183 51                                                                                             
 \pinlabel {$a$}  at 155 66                                                                                             
 \pinlabel {$B$}  at 136 36                                                                                             
 \pinlabel {$A$}  at 153 37                                                                                             
 \pinlabel {$a$}  at 164 49
 
 \pinlabel {$A$}  at 277 0                                                                                              
 \pinlabel {$A$}  at 265 35                                                                                             
 \pinlabel {$B$}  at 228 24                                                                                             
 \pinlabel {$a$}  at 287 40                                                                                            
 \pinlabel {$B$}  at 322 53                                                                                             
 \pinlabel {$a$}  at 285 77
\endlabellist
\includegraphics[scale=1]{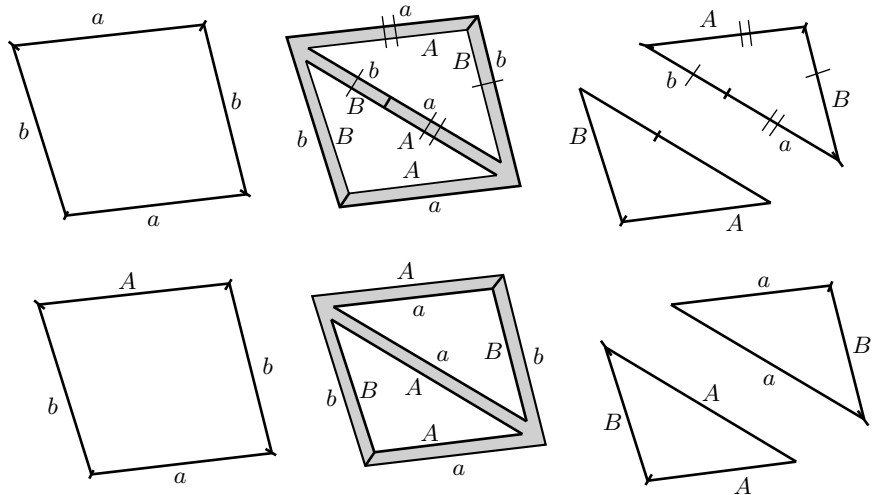}
\caption{Performing triangle moves on loops with diameters between corners 
in order to produce $2$-run loops.  There are two possibilities, depending 
on whether an inverse pair appears.}
\label{fig:perfecting}
\end{figure}

Let us assume without loss of generality that our loop is 
of the form $b^{e_1}a^{e_2}b^{e_3}a^{e_4}$, where 
$|e_1| + |e_2| = |e_3| + |e_4| = L/2$.  At this stage, we need to 
differentiate between positive and negative exponents.  
Suppose that $e_2$ and $e_4$ (the $a$ runs) have the same sign.  
Then doing a 
triangle move at the run $b^{e_1}$ produces a $2$-run loop and 
the loop $a^{e_2}b^{\pm e_1}a^{-e_2}b^{-e_1}$. 
When following Figure~\ref{fig:perfecting}, one must remember that 
the loop is on the inside of the pants, so the orientation is backwards.

Therefore, $b^{e_1}a^{e_2}b^{e_3}a^{e_4}$ is pants 
equivalent to (a $2$-run loop and) the inverse of this new loop, 
i.e. $b^{e_1}a^{e_2}b^{\mp e_1}a^{-e_2}$. Observe that this 
loop still has a diameter between corners, but now the signs on the $a$ 
runs are different.  

We have reduced to a loop of the form
$b^{e_1}a^{e_2}b^{e_3}a^{e_4}$, where 
$|e_1| + |e_2| = |e_3| + |e_4| = L/2$, and where the signs of 
$e_2$ and $e_4$ are different.  In this case, we may attach a 
pair of pants by labelling the diameter entirely by $a$'s.  This produces 
two $2$-run loops, and completes the proof.
\end{proof}

We are now left with a collection of only $2$-run loops.
For the next step, we will modify our collection of loops with $2$ runs 
to put them in a standard form.  A \emph{uniform} loop has a single run, so just 
one generator appears.  An \emph{even} loop has two runs of the same length 
(so length $L/2$).  The \emph{type} of a loop with two runs is a pair 
that records which generators appear, so for example $(a,B)$.

\begin{lemma}
\label{lem:2_runs_to_nice}
Any collection of loops with $2$ runs is pants equivalent to a collection 
of uniform loops, even loops, and at most one loop of each type.
\end{lemma}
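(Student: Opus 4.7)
The plan is to treat each of the four types of non-uniform, non-even $2$-run loops---$(a,b)$, $(a,B)$, $(A,b)$, $(A,B)$---separately; the argument for each type is symmetric, so I focus on $(a,b)$. I parametrize a loop of this type by its $a$-exponent $p \in \{1, \ldots, L-1\} \setminus \{L/2\}$, writing $\ell^p = a^p b^{L-p}$. The goal is to show that any collection $\sum_i \ell^{p_i}$ is pants-equivalent to at most one $\ell^{p^*}$ plus a collection of uniform and even loops.

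The first ingredient is the \emph{shift move}: for each $p$, I build a pair of pants with boundary $\ell^p + \ell^{p \pm L/2} + e$ (indices reduced mod $L$ into the allowed range), where $e$ is an even loop of type $(a,B)$ or $(A,b)$ depending on the direction of the shift. This pants can be constructed directly from a theta graph with edges like $a^{L/2}$, $B^{L-p}A^{p-L/2}$, and $B^{L/2}$ (or $b^{L/2}$), and it identifies the pair $\{p, p+L/2\}$ as a single equivalence class modulo evens. After applying shifts I may assume all exponents $p_i$ lie in $\{1, \ldots, L/2-1\}$, leaving $L/2-1$ shift-classes to collapse.

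The second, more delicate ingredient is a pants move that bridges different shift-classes, whose third boundary is a $4$-run loop rather than a $2$-run one. Specifically, for each $p \in \{1,\ldots,L/2-1\}$, I construct a pants with boundary $\ell^p + \ell_{aB}^p + \gamma_p$, where $\ell_{aB}^p = a^p B^{L-p}$ and $\gamma_p$ is a $4$-run loop of length $L$ and homology $(-2p, 0)$. For example, when $L=8$ and $p=3$, such a pants is realized with theta-graph edges $e_1 = a^3 b$, $e_2 = B^4$, $e_3 = b A^3$, yielding $\gamma_3 = A^3 b A^3 B$, and the analogous construction works in general. Adding a free $\iota$-pair annulus $\ell_{aB}^p + \ell_{Ab}^p$ and applying the above pants swaps $\ell^p$ for $\iota(\gamma_p)$ plus uniform and even byproducts. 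The resulting $4$-run loop $\iota(\gamma_p)$ is simplified via the chain of Lemma~\ref{lem:pants_to_4_runs}, Lemma~\ref{lem:4_runs_to_balanced}, Lemma~\ref{lem:balanced_to_diameter between corners}, and Lemma~\ref{lem:diameter_between_corners_to_2_runs}; tracking exponents through the triangle moves in this reduction yields $2$-run loops whose exponents lie in shift-classes different from $\{p, p+L/2\}$, as needed to identify classes with one another.

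Iterating the shift move and the $4$-run detour, all non-even $2$-run $(a,b)$ loops collapse into a single pants-equivalence class modulo uniforms and evens, so the collection reduces to at most one $\ell^{p^*}$; applying the same argument to the other three types completes the proof. The main obstacle, and the source of most of the combinatorial work, will be verifying that the chain of moves applied to $\iota(\gamma_p)$ actually reaches $2$-run loops \emph{outside} the starting shift-class (otherwise the composite would be a trivial relation), which requires careful parity-and-exponent bookkeeping through the triangle moves.
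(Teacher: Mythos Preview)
Your proposal has genuine gaps, and the overall strategy is considerably more convoluted than necessary.

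First, the accounting in your ``bridge move'' is wrong. From the pants with boundary $\ell^p + \ell_{aB}^p + \gamma_p$, the pants-equivalence relation gives
\[
\ell^p \;\sim\; \iota(\ell_{aB}^p) + \iota(\gamma_p) \;=\; \ell_{Ab}^p + \iota(\gamma_p).
\]
The term $\ell_{Ab}^p$ is a genuine $2$-run loop of type $(A,b)$; it is neither uniform nor even, and it is not one half of an $\iota$-pair already present in your collection. Adding a ``free $\iota$-pair annulus $\ell_{aB}^p + \ell_{Ab}^p$'' does not cancel it: that annulus contributes \emph{both} $\ell_{aB}^p$ and $\ell_{Ab}^p$ to the boundary, so you are left with an unpaired $\ell_{Ab}^p$ (or, equivalently, an unpaired $\ell_{aB}^p$) no matter how you arrange things. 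Thus your type-by-type treatment (``applying the same argument to the other three types'') breaks down: reducing type $(a,b)$ contaminates type $(A,b)$, and vice versa, and nothing in your outline controls this feedback.

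Second, and as you yourself flag, the heart of the argument --- that the chain of earlier lemmas applied to $\iota(\gamma_p)$ produces $2$-run loops in a shift-class different from $\{p,\,p+L/2\}$ --- is entirely unverified. This is not a routine bookkeeping exercise: tracing $\iota(\gamma_p) = a^p b^{L/2-p} a^p B^{L/2-p}$ through the triangle moves of Lemma~\ref{lem:diameter_between_corners_to_2_runs} involves several case splits, and you would further need to show that the resulting relation on shift-classes is \emph{connected}, not merely nontrivial, in order to collapse everything to a single representative.

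The paper's approach avoids both problems by staying entirely within $2$-run loops of a single fixed type. After the initial reduction to short $b$-runs (your ``shift move''), it introduces two direct operations: a \emph{trade}, which exchanges portions of the $b$-runs between two loops of the same type, and a \emph{combine}, which merges two short $b$-runs onto a single loop (producing uniform and even byproducts). Iterating these consolidates all $b$-mass onto at most one remainder loop per type, with a brief explicit $4$-run detour only in the single edge case where two loops have $b$-runs summing to exactly $L/2$. This is both simpler and avoids any cross-type contamination.
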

\begin{proof}
This step involves shifting and combining loops into even and uniform loops, 
which will leave a finite remainder.  All of these operators 
are performed on loops of the same type.  The first step is to 
arbitrarily select a generator to be the \emph{small} generator on each loop.  
We'll choose $b$.  Any loop whose $b$ run has length over $L/2$ can 
be cut with a diameter labeled with just $a$ to produce an even loop 
and a loop with $b$ run length less than $L/2$, as shown in 
Figure~\ref{fig:make_b_small}.  
\begin{figure}[ht]
\labellist
\small
 \pinlabel {$b$} at 76 7
 \pinlabel {$a$} at 30 91
 \pinlabel {$b$} at 197 15
 \pinlabel {$B$} at 186 29
 \pinlabel {$a$} at 163 42
 \pinlabel {$A$} at 149 54
 \pinlabel {$B$} at 122 48
 \pinlabel {$b$} at 108 52
 \pinlabel {$A$} at 149 79
 \pinlabel {$a$} at 136 89
 \pinlabel {$b$} at 324 11
 \pinlabel {$A$} at 283 45
 \pinlabel {$a$} at 261 49
 \pinlabel {$b$} at 216 45
\endlabellist
\includegraphics[scale=1]{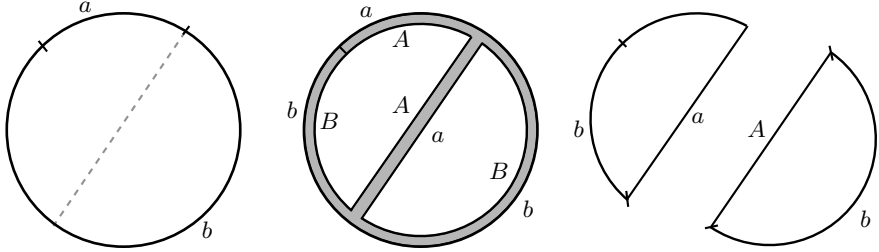}
\caption{If a loop has a $b$ run length larger than $L/2$, 
it can be cut to produce an even loop and a loop with a 
$b$ run shorter than $L/2$.  See the text for a discussion 
of why the inner boundary components become inverse on the right.}
\label{fig:make_b_small}
\end{figure}
An important feature of cutting to reduce the size of the $b$ run 
is that it doesn't change the type of the loop.  We have been 
somewhat casual about this thus far, because it makes the 
pictures easier to understand, but recall from the introduction 
to this section that if we have a loop $\gamma$, and we find a 
pair of pants with boundary $\gamma + \alpha + \alpha'$, then 
the remaining problem is to find a collection of pants 
with boundary $\iota(\alpha) + \iota(\alpha')$; that is, recall that 
$\gamma$ is not pants equivalent to $\alpha + \alpha'$, but rather 
is pants equivalent to $\iota(\alpha) + \iota(\alpha')$.  Therefore, 
as shown in Figure~\ref{fig:make_b_small}, the input is the
loop of type $(a,b)$, and the output is an even loop and another 
loop of type $(a,b)$.  The same holds true for the other 
operations we describe here.

We aren't concerned 
with the even loops, so we turn our attention to the loops with $b$ 
run length strictly less than $L/2$.  There are two necessary operations 
here; the \emph{trade}, in which we swap pieces of the 
$b$ run between loops, and the \emph{combine}, in which we combine 
two small $b$ loops into a single one (and produce a uniform loop 
and two even loops as byproducts).  
The combine operation works 
on any two loops whose total number of $b$'s is strictly less than $L/2$.
These operations are shown in 
Figures~\ref{fig:trading} and~\ref{fig:combining}.  
Algebraically, the trade operation takes in $a^{p_1}b^{t_1 + t_2}$ and 
$a^{p_2}b^{w_1 + w_2}$, where $t_1, t_2, w_2>0$ and $w_1 \ge0$, and produces 
$a^{p_1}b^{t_1 + w_2}$ and $a^{p_2}b^{w_1 + t_2}$.  The combine 
operator takes in $a^{p_1}b^{r_1}$ and $a^{p_2}b^{r_2}$, where $r_1 + r_2 < L/2$, 
and produces $a^{p_3}b^{r_1+r_2}$.

\begin{figure}[ht]
\labellist
\small\hair 2pt
 \pinlabel {$b$} at 38 174
 \pinlabel {$a$} at 42 74

 \pinlabel {$b$} at 99 94
 \pinlabel {$a$} at 133 3
 
 \pinlabel {$b$} at 185 110
 \pinlabel {$a$} at 206 146
 \pinlabel {$b$} at 188 178
 \pinlabel {$a$} at 141 136
 
 \pinlabel {$a$} at 287 101
 \pinlabel {$b$} at 280 173
 \pinlabel {$A$} at 256 159
 \pinlabel {$B$} at 235 120
 
 \pinlabel {$a$} at 168 16
 \pinlabel {$b$} at 220 29
 \pinlabel {$a$} at 205 69
 \pinlabel {$b$} at 186 90

 \pinlabel {$a$} at 291 6
 \pinlabel {$b$} at 296 77
 \pinlabel {$A$} at 265 67
 \pinlabel {$B$} at 249 31
 
 \pinlabel {$a$} at 354 75
 \pinlabel {$b$} at 362 176

 \pinlabel {$a$} at 382 9
 \pinlabel {$b$} at 440 93
\endlabellist

\includegraphics[scale=0.75]{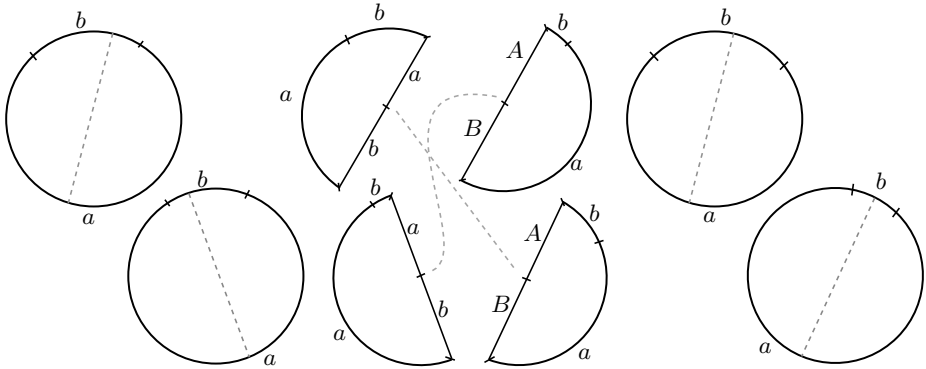}
\caption{Trading pieces of $b$ runs between loops}
\label{fig:trading}
\end{figure}

\begin{figure}[ht]
\labellist
\small\hair 2pt
 \pinlabel {$b$} at 19 196
 \pinlabel {$a$} at 78 114
 \pinlabel {$B$} at 50 148
 \pinlabel {$b$} at 42 162
 
 \pinlabel {$b$} at 95 39
 \pinlabel {$a$} at 10 83
 \pinlabel {$b$} at 40 54
 \pinlabel {$B$} at 52 40
 
 \pinlabel {$b$} at 201 81
 \pinlabel {$b$} at 126 140
 \pinlabel {$a$} at 121 59
 \pinlabel {$a$} at 185 139
 \pinlabel {$b$} at 148 96
 \pinlabel {$B$} at 158 108
 
 \pinlabel {$b$} at 251 55
 \pinlabel {$a$} at 299 142
 \pinlabel {$a$}  at 275 93
 \pinlabel {$A$} at 275 108
 
 \pinlabel {$b$} at 343 134
 \pinlabel {$a$} at 425 66
\endlabellist
\includegraphics[scale=0.75]{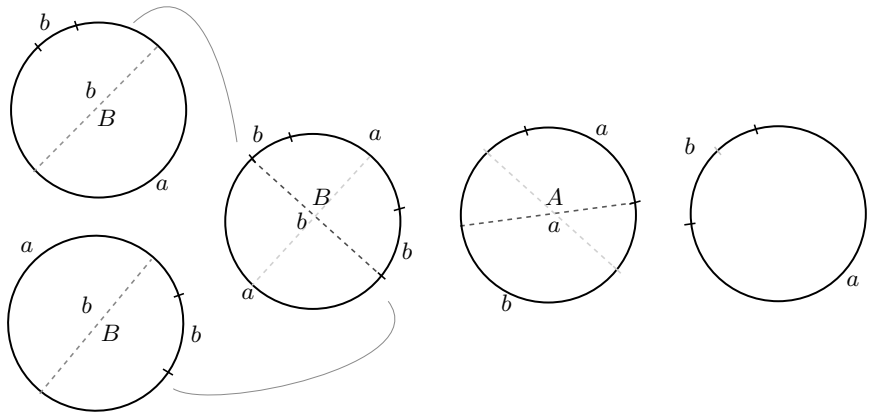}
\caption{Combining $b$ runs onto a single loop.  The first step 
produces a byproduct uniform $a$ loop, the second an even loop, and 
the third another even loop.}
\label{fig:combining}
\end{figure}

Notice that the trade operation requires only that \emph{one} of the 
diameters be interior in the $b$ run, or as written above, $w_2 > 0$ but $w_1 \ge 0$.  
Using the trade operation, 
we can take every $b$ run and, for each one, move all but a single $b$ onto a single 
chosen loop.  Whenever this chosen loop contains a
$b$ run longer than $L/2$, we cut it off and start trading again.  
After this, we are left with a single loop with an unknown length $b$ run, and 
possibly many loops with a single $b$.  Then, we use the combine operation 
to combine these loops into a smaller number of loops with longer $b$ runs.  
Then we trade the $b$ mass to our chosen loop again, and so on.

After trading and combining as much as possible, we are left with 
either no loop (if the only remaining loop is even), 
a single loop with a $b$ run of length less than $L/2$, or two loops 
whose combined run length is exactly $L/2$.  This last case arises because 
we cannot use the combine operation on these loops.  There is yet another 
sequence of moves, however, to resolve this: we use the 
trade operation to obtain two loops with $b$ runs of length exactly $L/4$.  
Then we cut and join them to produce a perfectly balanced loop with $4$ runs.  
A single triangle move results in an even loop plus a commutator; we then
apply Figure~\ref{fig:perfecting} to the commutator to get a pair of inverse loops. 

Doing this to each loop type proves the lemma.
\end{proof}

The final step in the proof of Proposition~\ref{prop:untagged_good_pants} 
is to show that we can attach pants and annuli to the output of 
Lemma~\ref{lem:2_runs_to_nice}, that is, uniform loops, even loops, 
and a single remainder loop of each type, so that we have nothing left.  
Let $x_{a,b}$, $x_{a,B}$, $x_{A,b}$, and $x_{A,B}$ denote the run 
length of the single $b$ run in the remainder loop of each type.  
Rescaling and considering arbitrary $L$, we can think of each 
variable as a real number in the interval $[0,1/2)$.  The fact that 
the entire collection of loops must be homologically trivial 
gives us two linear equations counting the homology contributions 
to $a$ and $b$, respectively:
\begin{align*}
x_{a,b} + x_{A,b} - x_{a,B} - x_{A,B} &= k_1 \\
(1-x_{a,b}) + (1-x_{a,B}) - (1-x_{A,b}) - (1-x_{A,B}) &= k_2.
\end{align*}
Since there are even and uniform loops to consider, it is not 
{\it a priori} the case that $k_1 = k_2 = 0$.  However, it \emph{is} 
the case that $k_1, k_2 \in \frac{1}{2}\Z$, and $k_1 \pm k_2 \in \Z$, 
since the uniform and even loops change homology discretely by $1$ and $1/2$, 
respectively.  Now consider $k_1 + k_2 = 2(x_{A,b} - x_{a,B})$.  
Since $0 \le x_{A,b},x_{a,B} < 1$, we have $k_1+k_2 \in (-1,1)$, so 
$k_1 + k_2 = 0$.  A similar argument shows that $k_1 - k_2 = 0$, so 
$k_1 = k_2 = 0$.  Therefore, $x_{a,b} = x_{A,B}$ and $x_{a,B} = x_{A,b}$.
These equalities show that actually, the remainder loops we 
have must be inverse pairs, so they are the boundary of two annuli.
The homologically trivial collection 
of uniform and even loops can now be glued along entire runs, so is 
obviously pants equivalent to the empty collection.  

That is, we are originally given a collection $s'$ of loops of length $L$, 
and after applying Lemmas~\ref{lem:pants_to_4_runs}, 
\ref{lem:4_runs_to_balanced}, \ref{lem:balanced_to_diameter between corners}, 
\ref{lem:diameter_between_corners_to_2_runs}, \ref{lem:2_runs_to_nice}, 
and the argument above, we have shown that our 
original collection was pants equivalent to the 
empty collection, meaning have successfully produced a collection of pants and 
annuli with boundary $s' + t + \iota(t)$, where $t$ is the many 
intermediate boundaries we used to reduce 
$s'$.  Adding in $\iota$-annuli, then, gives us a 
collection of pants and annuli which has boundary $s' + n\1'$, 
for some sufficiently large $n$.

Observe that we never need to duplicate our collection $s'$, or, 
equivalently, multiply $v'$ by any factor.  We have therefore proved 
the stronger Proposition~\ref{prop:rank_2_untagged} for rank $2$ 
free groups.

\subsection{Higher rank}\label{subsection:higher_rank}

We have completed the proof of Proposition~\ref{prop:untagged_good_pants} 
in the case that the free group has rank $2$.  We now describe the 
necessary modifications to the argument for higher rank free groups.
Given a collection of loops $s' \in S'(L)$, the same technique 
of cutting with diameters works to show that $s'$ is pants 
equivalent to a collection of $4$-run loops.  However, 
triangle moves no longer apply, since each of the $4$ runs might be 
a run of a different generator.

The first step is to attach pants in such a way that we 
are left with $4$-run loops, each of which only involves two generators.  
This is actually quite straightforward, since we have more freedom 
with the labels on the diameters that we attach.  
Figure~\ref{fig:reduce_to_rank_2} shows how to attach 
diameters to produce loops of the desired form which are 
pants equivalent to the original loop.  Technically, the figures 
represent simply unions of pants, not the (non-trivalent) fatgraphs shown.  
They are drawn as shown to emphasize the point that we produce several 
other byproduct loops, but they come in cancelling inverse pairs.  
All the interior diameters shown have length $L/2$, even though they are not 
drawn to scale.

\begin{figure}[ht]
\labellist
\small\hair 2pt
 \pinlabel {$a$} at 67 -5
 \pinlabel {$b$} at 146 59
 \pinlabel {$c$} at 58 120
 \pinlabel {$d$} at -5 63
 \pinlabel {$A$} at 78 12
 \pinlabel {$B$} at 127 73
 \pinlabel {$C$} at 107 103
 \pinlabel {$B$} at 110 80
 \pinlabel {$C$} at 36 100
 \pinlabel {$D$} at 17 63
 \pinlabel {$D$} at 33 46
 \pinlabel {$A$} at 64 16
 \pinlabel {$D$} at 89 20
 \pinlabel {$b$} at 101 66
 \pinlabel {$d$} at 85 58
 \pinlabel {$D$} at 51 78
 \pinlabel {$a$} at 44 23
 \pinlabel {$d$} at 48 35
 
 \pinlabel {$a$} at 258 -5
 \pinlabel {$b$} at 326 60
 \pinlabel {$c$} at 251 121
 \pinlabel {$b$} at 176 58
 \pinlabel {$A$} at 273 13
 \pinlabel {$B$} at 311 70
 \pinlabel {$C$} at 286 107
 \pinlabel {$C$} at 215 99
 \pinlabel {$B$} at 197 69
 \pinlabel {$b$} at 282 30
 \pinlabel {$B$} at 283 65
 \pinlabel {$b$} at 242 86
 \pinlabel {$C$} at 224 59
 \pinlabel {$c$} at 227 43
 \pinlabel {$a$} at 241 40                                                                                                                                                        
 \pinlabel {$A$} at 253 27                                                                                                                                                        
\endlabellist     
\includegraphics[scale=1.1]{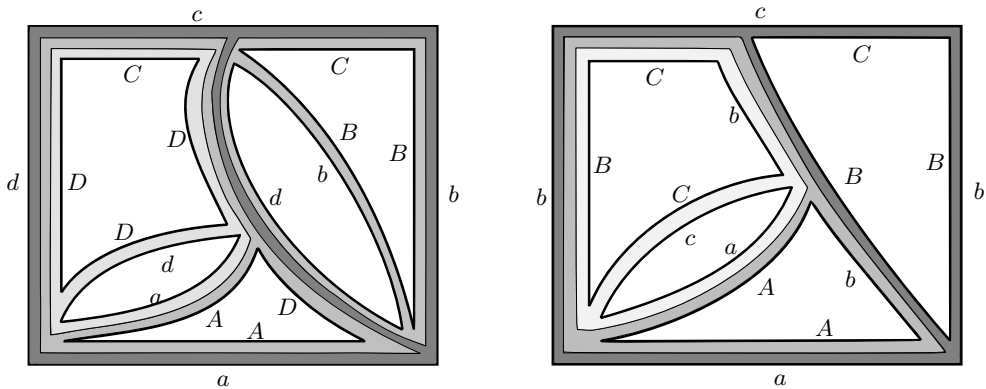}
\caption{Reducing each loop to lie in a rank $2$ subgroup.  The fatgraph on 
the left is built out of folded pants only when $b$ and $d$ are distinct 
generators.  If $b=d^{\pm 1}$, then we use the picture on the right.  A similar 
picture holds when the vertical diameter has endpoints on runs of the same generator.  
The point is that all of the non inverse-matched pants boundaries 
lie in rank $2$ subgroups.}
\label{fig:reduce_to_rank_2}
\end{figure}

We remark that the pictures in Figure~\ref{fig:reduce_to_rank_2} are general, up to 
rotation and reflection.  
The double-diameter from top to bottom exists by the argument in the proof 
of Lemma~\ref{lem:pants_to_4_runs}, and the double-diameter from the lower left 
corner to the middle exists because the first diameter has length $L/2$.
We also remark that in higher rank, it is possible that we have some $3$-run loops.  
It is simple to cut these to $4$-run loops and then apply the above argument.

After applying Lemma~\ref{lem:2_runs_to_nice}, we may assume that we are left entirely
with uniform loops, even loops, and one $2$-run loop of each type.  Now, though, there are 
$4\binom{r}{2}$ loop types, which is too many to duplicate the 
linear-algebraic argument from the previous section.  The simple solution 
is to take $L/2$ copies of our collection.  Now each loop type is 
repeated exactly $L/2$ times, so when we re-collect the remainder, 
we are left with no remainder, so we have only uniform loops and 
even loops, which can be paired arbitrarily.  Therefore, 
we have found a collection of pants and annuli which 
has boundary $(L/2)v' + n\1'$, which completes the proof 
of Proposition~\ref{prop:untagged_good_pants}.

\begin{remark}
The statement of Proposition~\ref{prop:good_pants} 
doesn't quantify how $n$ depends on the size of $v$, but this is technically 
necessary to deduce Theorem~\ref{theorem:thin_fatgraph} from 
Proposition~\ref{prop:good_pants}. Following the steps
of the argument shows directly that $n=O(\|v\|_1)$; however, one can deduce
the existence of such a linear bound on general grounds, as we now indicate.
If we fix $L$, then the cone of homologically trivial collections of 
tagged loops of length $L$ is a finite sided rational cone, so it 
has a {\em finite} Hilbert basis $B$. Applying Proposition~\ref{prop:good_pants}
to each basis vector $b$ gives a constant $n(b)$ so that $(L/2)b + n(b)\1$ bounds
a collection of good pants and annuli. Since every integral $v$ can be expressed
as a disjoint union of copies of basis vectors, we obtain a uniform linear estimate for
$n$.  To deduce Theorem~\ref{theorem:thin_fatgraph}, 
we observe that by making $\epsilon \ll \delta$, we can make the 
distribution of tagged loops as close to identical as desired. 

Note that for random (rather than pseudorandom) words, 
if the mass of the reservoir is $N$, the central limit theorem says that
the deviation from equidistribution will be of order $\sqrt{N}$.
\end{remark}

\subsection{Nonorientable surfaces}\label{subsection:nonorientable}

Proposition~\ref{prop:good_pants} shows that any sufficiently 
uniform collection $s$ of tagged loops can be glued up into a thin fatgraph 
(in fact, can be glued up into just annuli and pants).  
We apply Proposition~\ref{prop:good_pants} as the last step in the 
proof of Theorem~\ref{theorem:thin_fatgraph} to find 
many closed surface subgroups of random groups.  We claim that 
Proposition~\ref{prop:good_pants} is the right way to do this, 
for reasons discussed at the end of this section.  However, if we were only 
interested in the existence of surface subgroups, we can replace 
Proposition~\ref{prop:good_pants} with a trick which avoids most of
the technical difficulty in this section.

Consider a collection $s$ of tagged loops. 
Duplicate the collection once, so we assume that every loop in $s$ 
appears an even number of times.  
Now take an untagged loop $\gamma$.  In the collection $s$, there are 
many tagged copies of $\gamma$, and the tags are almost equidistributed 
around $\gamma$.  Pair up these tagged loops such that in every pair, the tags are 
almost antipodal, and certainly distance $L/4$ apart.  
Let's consider a single pair $\gamma_1$ and $\gamma_2$.  
They are both tagged copies of the loop $\gamma$.  There's no annulus 
with {\em oriented} boundary $\gamma_1 + \gamma_2$, but there {\em is} an annulus
bounding $\gamma_1+\gamma_2$ in such a way that the orientation of one
of the $\gamma_i$ {\em disagrees} with that it inherits from the annulus.
Performing this pairing for all pairs gives a collection of annuli bounding all the
tagged $\gamma$-loops. 

This construction will give rise to {\em nonorientable} surfaces in random groups,
which will be certified to be $\pi_1$-injective in the sequel. Taking an index 2 subgroup
gives an oriented surface subgroup.

There are at least two good reasons to justify the hard work that went into the proof
of Proposition~\ref{prop:good_pants}. The first is that this proposition is of independent
interest, and can be used as a combinatorial tool in many contexts where it is important
to build orientable surfaces or fatgraphs. The second is that the nonorientable surface
subgroups built using the trick above will never be essential in $H_2$, whereas the surfaces
built using the full power of the Thin Fatgraph Theorem {\em can} be taken to be
homologically essential in random groups whenever the density $D$ is positive.

We remark that the original Kahn--Markovic construction of surface subgroups in
hyperbolic 3-manifolds necessarily produced nonorientable (and therefore homologically
inessential) quasifuchsian surfaces. Very recently, Liu--Markovic \cite{Liu_Markovic}
have shown how to modify
the construction --- using substantial ingredients from the Kahn--Markovic proof of the
Ehrenpreis conjecture --- to build orientable quasifuchsian surfaces (projectively) realizing
any homology class.

\section{Random one-relator groups}\label{section:one_relator_group}

Throughout this section we fix a free group $F_k$ with $k\ge 2$ generators, and we fix
a free generating set. For some big (unspecified) constant $n$, we let $r$ be a random 
cyclically reduced word of length $n$, and we consider the one-relator group 
$G:=\langle F_k\; | \; r \rangle$.

In this section we will show that with probability going to 1 as $n \to \infty$, the group $G$
contains a surface subgroup $\pi_1(S)$. The surface $S$ in question can be built from $N$ disks bounded by $r$
and $N$ disks bounded by $r^{-1}$, glued up along their boundary in such a way that the $1$-skeleton is a trivalent
fatgraph with every edge of length $\ge L$, where $L$ is some (arbitrarily big) constant fixed in
advance, and $N \le 20L$ is the constant in the Thin Fatgraph Theorem~\ref{theorem:thin_fatgraph}.

The group $G$ is evidently $C'(\lambda)$ for any $\lambda>0$ with probability going to 1 as
$n \to \infty$, and therefore the injectivity of $S$ can be verified by showing that the $1$-skeleton
of $S$ does not contain a long path in common with $r$ or $r^{-1}$, except for a path contained
in the boundary of one of the $2N$ disks.

A trivalent graph $Y$ in which every edge has length at least $L$ has at most $2|Y|\cdot 2^{m/L}$
subpaths of length $m$, where $|Y|$ is the length of $Y$. In our context the trivalent graph $Y$
will arise as the 1-skeleton of a surface $S$ constructed as above, so $|Y|=O(n)$.
In a free group of rank $k$ there are
(approximately) $(2k-1)^m$ reduced words of length $m$, and the relator $r$ contains at most $2n$ of
them (allowing inverses). 
So if we fix any positive $\epsilon'$, and take $m=\epsilon'\cdot n$, then providing we choose $L$
so that $1/L \log_{2k-1} 2 < \epsilon'$
a simple counting argument shows that $Y$ does not have any
path of length $m$ in common with an {\em independent} random word of length $n$, with probability
$1-O(e^{-n^c})$. However, $Y$ and $r$ are utterly dependent, and we must work harder to show 
that $S$ is injective. 

The key idea is the observation that disjoint subwords of a long random relator $r$ are
(almost) {\em independent of each other}. Informally, we fix some small positive $\delta$, and
break up $Nr \cup Nr^{-1}$ into pieces (called {\em beads}) of size $n^{1-\delta}$ which each bound 
their own trivalent fatgraph (by the Thin Fatgraph Theorem). 
Then subpaths in the fatgraph associated to one bead {\em will} 
be independent of the subpaths of $Nr \cup Nr^{-1}$ associated to another bead, and this argument
can be made to work.

\begin{remark}
The ``simple counting argument'' alluded to above is a special case of
Gromov's intersection formula (\cite{Gromov_asymptotic} \S~9.A) 
which implies that two sets of independent random words of a fixed length whose (multiplicative) densities
sum to less than 1 are typically disjoint. We apply this observation in a more substantial way
in \S~\ref{section:random_groups}, especially in the proof of Theorem~\ref{theorem:surface_random_group}.
\end{remark}

\subsection{Independence and correlation}\label{subsection:independence}

Since this is the first point in the argument where we are using the genuine randomness of the
relators (rather than just pseudorandomness), some remarks are in order.

A random word in a finite alphabet (in the uniform distribution) has the property that any two
disjoint subwords are independent. A random (cyclically) {\em reduced} word in the free group fails to have
this property, since (for example) if $uv$ are adjacent subwords, the last letter of $u$
must not cancel the first letter of $v$ (so the words are not really independent). However, such
words have a slightly weaker property which is just as useful as independence in most circumstances;
this property can be summarized by saying that {\em correlations decay exponentially}. 

To explain the meaning of this, let's fix reduced words $u$ and $v$ and a distance $T$. Suppose
$r$ is a random (reduced) word, and let's write $r$ as $abcd$ where $|a|=|u|$, where $|b|=T$,
where $|c|=|v|$, and where $|d|=n-|a|-|b|-|c|$. The probability that $c=v$ is $1/(2k)(2k-1)^{v-1}$.
Saying that correlations decay exponentially means that the probability that $c=v$ {\em conditioned
on $a=u$} satisfies
$$1- (2k-2)^{-T} < \frac {\Pr(c=v \;| \; a=u)} {\Pr(c=v)} < 1+ (2k-2)^{-T}$$
This estimate is elementary; 
see e.g.\/ \cite{Calegari_Walker_RR}, Lem.~2.4 for a careful proof. In the more general context
of stationary Markov chains, such exponential bounds on the deviation from independence in the
tail are called {\em Chernoff bounds}.

In the sequel we are typically interested in estimating the probability of finding a pair of
matching subwords which are separated by a considerable distance. Only order-of-magnitude
estimates of the probability are important for our arguments. So in practice we can treat the
subwords as though they were independent.

\subsection{Beads}

Let $r$ be a random cyclically reduced word of length $n$. We fix some (small) positive constants 
$C$ and $\delta$; later we will say how small they should be. 

\begin{definition}
A {\em bead decomposition} of $r$ is a decomposition of $r$ into segments labeled (in cyclic order)
$r_0, r_1^+, r_2^+,\cdots, r_{M-1}^+, r_M, r_{M-1}^-,\cdots, r_1^-$ where each $r_i^\pm$ has 
length $n^{1-\delta}\cdot(1/2+o(1))$ and $r_0, r_M$ have length $n^{1-\delta}\cdot(1+o(1))$
(so that $M=n^{\delta}\cdot(1+o(1))$) so that the prefix of $r_i^+$ of length $C\log(n)$ is inverse to
the suffix of $r_i^-$, and the prefix of $r_M$ of length $C\log(n)$ is inverse to the suffix of $r_M$.
\end{definition}

See Figure~\ref{bead_decomposition}.

\begin{figure}[htpb]
\labellist
\small\hair 2pt
\pinlabel $r_0$ at -10 30
\pinlabel $r_1^+$ at 100 0
\pinlabel $r_1^-$ at 100 60
\pinlabel $r_2^+$ at 175 0
\pinlabel $r_2^-$ at 175 60
\pinlabel $\cdots$ at 288 0
\pinlabel $\cdots$ at 288 60
\pinlabel $r_{M-1}^+$ at 400 0
\pinlabel $r_{M-1}^-$ at 400 60
\pinlabel $r_M$ at 500 30
\endlabellist
\centering
\includegraphics[scale=0.6]{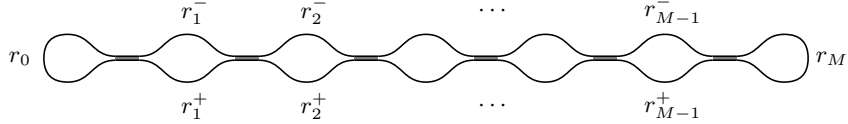}
\caption{Bead decomposition of $r$}\label{bead_decomposition}
\end{figure}

Given a bead decomposition of $r$, we glue the mutually inverse subwords described above, thereby
decomposing $r$ into a sequence of loops (``beads'') of length $n^{1-\delta}\cdot(1+o(1))$ with
one or two tags. Denote these beads $B_0,B_1,\cdots, B_M$. The edges of length $C\log(n)$ obtained
by gluing the inverse prefixes/suffixes we refer to as the {\em lips} of the beads.

\begin{lemma}[Bead Decomposition]\label{lemma:bead_decomposition}
There exists a bead decomposition with probability $1-O(e^{-n^c})$.
\end{lemma}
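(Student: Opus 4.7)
The plan is to reduce the existence of a bead decomposition to a pseudorandomness property of $r$ (which holds with the required probability by a Chernoff bound), and then to build the cut positions one lip at a time, exploiting the slack in segment lengths.

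Parametrize $r$ cyclically by $[0,n)$ and let $p_0 = 0 < p_1 < \cdots < p_{2M-1}$ be boundary positions so that the $2M$ consecutive segments $r|_{[p_i, p_{i+1})}$ correspond in order to $r_0, r_1^+, \ldots, r_{M-1}^+, r_M, r_{M-1}^-, \ldots, r_1^-$. The length requirements in the definition allow each $p_i$ to vary within a slack interval of size $s := n^{1 - \delta - \delta'}$ around its target position, for some small $\delta' > 0$ to be chosen below. The bead decomposition then imposes $M$ matching conditions: for $i = 1, \ldots, M-1$, the $C\log n$-subword of $r$ starting at $p_i$ must be the inverse of the $C\log n$-subword ending at $p_{2M-i+1}$, and an analogous self-matching condition pins down $r_M$.

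Applying Lemma~\ref{lemma:pseudorandom} at scale $T := C\log n$, for $C$ a small constant satisfying $C\log(2k-1) < 1 - \delta - \delta'$, one concludes that with probability $1 - O(e^{-n^c})$ the word $r$ is $(T,\epsilon)$-pseudorandom; indeed, applying the Markov Chernoff bound separately to each of the $O(n)$ windows of length $\geq n^{1-\delta-\delta'}$ and union-bounding over such windows and over the $O((2k-1)^T)$ possible subwords $\sigma$ yields the stronger statement that every such window is $\epsilon$-pseudorandom. Conditioned on this event, for any fixed tentative choice of $p_i$ the $T$-subword beginning at $p_i$ is a specific reduced word, whose inverse occurs inside the slack interval for $p_{2M-i+1}$ with count $s \cdot (2k-1)^{-T}(1\pm\epsilon)/(2k)$, i.e.\ polynomially large in $n$ by the choice of $C$. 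The same reasoning supplies the required self-matching position for $r_M$.

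It then remains to choose the $M$ lip pairs compatibly. Because distinct lips involve subwords of $r$ separated by distance at least $n^{1-\delta}/2 - O(s)$, the exponential-decay-of-correlations estimate of \S\ref{subsection:independence} shows that the events associated to different lips are independent up to a multiplicative error $1 + (2k-2)^{-\Theta(n^{1-\delta})}$, which is negligible even after the $M = O(n^\delta)$ applications needed. A sequential greedy selection thus succeeds at each step with overwhelming probability, and a union bound over the $M$ lips plus the pseudorandomness failure probability yields the desired $1 - O(e^{-n^c})$.

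The main technical obstacle is the bookkeeping in this last step: one must verify that the $2M-1$ slack intervals are pairwise disjoint (so that near-independence actually applies), that no two $T$-subwords used in different matching conditions overlap within $r$, and that the cumulative multiplicative error arising from conditioning on the $M$ previously chosen lips does not erode the bound. This is a delicate but essentially standard application of the Chernoff estimate for finite Markov chains at the scales $T = C\log n$ and $s = n^{1-\delta-\delta'}$ fixed above.
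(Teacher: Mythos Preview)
Your proposal is correct and follows essentially the same strategy as the paper: exploit $o(n^{1-\delta})$ slack in the target cut positions to locate inverse-matching subwords of length $C\log n$, with the arithmetic constraint $C\log(2k-1) < 1-2\delta$ (your $1-\delta-\delta'$) ensuring polynomially many matches in each window, then absorb the polynomial number $n^\delta$ of lips into the exponential bound.

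The paper's execution is a bit more streamlined than yours. Rather than fixing a ``tentative'' $p_i$ on one side and searching the opposite slack interval for an inverse copy, the paper reads the two sides \emph{synchronously} outward from the midpoint of $r_0$: after extending $n^{1-\delta}(1/2-n^{-\delta})$ in each direction, it scans the next $n^{1-2\delta}$ letter-pairs and takes the first run of $C\log n$ consecutive inverse pairs. This determines $p_i$ and $p_{2M-i+1}$ simultaneously and makes the Chernoff estimate a one-line waiting-time bound. Your route via window-level $(T,\epsilon)$-pseudorandomness at scale $T=C\log n$ is also valid (note you are not literally invoking Lemma~\ref{lemma:pseudorandom}, which is stated for fixed $T$, but re-deriving the stronger windowed version directly from Chernoff, as you say).

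One expository point: once you condition on the global pseudorandomness event, the existence of a match in each slack interval is \emph{deterministic}, and since the slack intervals are pairwise disjoint the lips cannot interfere. Your third and fourth paragraphs, which invoke correlation decay to argue near-independence of the ``events associated to different lips'' and flag the bookkeeping as ``the main technical obstacle,'' are therefore superfluous; that obstacle is an artifact of mixing the deterministic (pseudorandomness) and probabilistic (independence) viewpoints. Pick one and the argument is short.
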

\begin{proof}
We start at an arbitrary location in $r$, and take this to be the approximate midpoint of $r_0$.
Extend this region $n^{1-\delta}\cdot(1/2 - n^{-\delta})$ in either direction, and then read the
next pair of segments of length $n^{1-2\delta}$ synchronously until the first time we read off a 
pair of mutually inverse segments of length $C\log(n)$. If $C<(1-2\delta)/\log(2k-1)$ then
such a pair of mutually inverse segments will occur with probability $1-O(e^{-n^c})$ by Chernoff's
inequality, for some $c>0$ depending on $\delta$. We then build successive segments $r_i^\pm$
in order by the same procedure. There are $n^{\delta}$ such segments, and this polynomial term
can be absorbed into the exponential estimate of probability 
at the cost of adjusting constants.
\end{proof}

In order to think about the (lack of) correlation between subwords of the different $B_i$, the following
mental picture is useful: imagine that we generate the word $r$ by a 
Markov process letter by letter as we go, starting at the center of $r_0$ and building outwards.
In this model, the letters making up each successive $B_i$ are
only generated after we have already constructed $B_j$ with $j<i$.

There is no reason to expect that the beads $B_i$ are homologically trivial, but there is a trick
to adjust them so that they are. We build a bead decomposition of $r$ and of $r^{-1}$ simultaneously,
so that the beads of $r^{-1}$ have inverse labels to the beads of $r$. We denote the beads of
$r^{-1}$ by $B_i^{-1}$, and note that they are inverse (as tagged cyclic words) to the $B_i$.
Then for each $i$ the union $B_i \cup B_i^{-1}$ is homologically trivial. 

By Theorem~\ref{theorem:thin_fatgraph} for each $i$, and for some $N\le 20L$,
the collection $NB_i \cup NB_i^{-1}$ bounds a trivalent fatgraph
$Y_i$ with all edges of length at least $L$, with probability $1-O(e^{-n^c})$. 
Since we can first build the fatgraph $Y_i$ in a way which depends only on the substrings
$r_i^\pm$, the Chernoff bound says that for any positive $\alpha$ there is $c(\alpha)$ so that with probability
$1-O(e^{-n^c})$ there are no paths in $Y_i$ of length $n^\alpha$ in common with any segment in
$r- r_i^\pm$. Summing over the $n^\delta$ different indices $i$, and absorbing this polynomial factor
into the probability estimate (at the cost of adjusting constants), we see that with probability
$1-O(e^{-n^c})$ there is no index $i$ and no path in any $Y_i$ of length $n^\alpha$ in common with
any segment of $r-r_i^\pm$.

\begin{lemma}[No long path]\label{lemma:no_long_piece}
Let $\beta>0$ be fixed.
Let $Y$ be the fatgraph obtained from the union of the $Y_i$ associated to a bead decomposition
as above. Then there is some positive $c(\beta)$ so that with probability $1-O(e^{-n^c})$ 
every path in $Y$ of length $\beta n$
which appears in $r$ or $r^{-1}$ is in $\partial S(Y)$.
\end{lemma}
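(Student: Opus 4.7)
The plan is to combine the path-counting bound in $Y$ with a probabilistic bound exploiting the approximate independence of disjoint beads, adapting the ``simple counting argument'' of the preamble to a setting where $Y$ and $r$ are no longer independent. Since $Y$ is trivalent with every edge of length at least $L$ and $|Y|=O(n)$, the number of combinatorial paths of length $\beta n$ in $Y$ is at most $2|Y|\cdot 2^{\beta n/L}$, which by taking $L$ sufficiently large relative to $\beta$ can be made smaller than $e^{\eta n}$ for any prescribed $\eta>0$; similarly there are at most $2n$ length-$\beta n$ subwords of $r\cup r^{-1}$. A union bound reduces the problem to showing that, for each fixed path $P$ of length $\beta n$ in $Y$ not contained in $\partial S(Y)$ and each fixed starting position $a$ in $r$ or $r^{-1}$, the probability that the label of $P$ equals the length-$\beta n$ subword of $r\cup r^{-1}$ starting at $a$ is at most $e^{-\gamma n}$ for some $\gamma$ exceeding $\beta\log 2/L$.

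Fix $\alpha\in(0,1-\delta)$ and decompose $P=P_1P_2\cdots P_m$ with each $P_j$ the maximal subpath of $P$ lying in a single bead fatgraph $Y_{i_j}$. The Chernoff estimate recalled in the paragraph immediately preceding the lemma guarantees, with probability $1-O(e^{-n^c})$, that every path in any $Y_i$ of combinatorial length $\ge n^\alpha$ whose label is a subword of $r$ has that label lying inside $r_i^\pm$. Consequently, if the label of $P$ matches $r$ starting at $a$, each subpath $P_j$ of length $\ge n^\alpha$ is pinned to its own bead's region of $r$; and since consecutive long $P_j$'s match consecutive subwords of $r$, their bead indices must appear in the natural order dictated by $r$. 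A second Chernoff-style estimate, applied inside each bead (now with $r_{i_j}^\pm$ playing the role of the relator and using the independence of $r_{i_j}^\pm$ from $r - r_{i_j}^\pm$ provided by Section~\ref{subsection:independence}), forces each such long $P_j$ to actually be a subpath of $\partial S(Y_{i_j})$ tracing the specified region, rather than some coincidental in-bead alternative.

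It remains to handle the short subpaths (those with $|P_j|<n^\alpha$) together with potential wrong turns of $P$ at tag junctions; this is the main technical obstacle. The idea is that any excursion of $P$ of length $\ell$ into a bead different from the one that the matched subword of $r$ would naturally occupy is an event of probability $\le(2k-1)^{-\ell(1-o(1))}$, by the exponential decay of correlations between disjoint substrings of a random reduced word. One then shows that if $P\not\subset\partial S(Y)$ then the cumulative length of such off-boundary excursions across $P$ is at least a positive fraction of $\beta n$: either a single long excursion occurs, contributing a factor $(2k-1)^{-\Omega(n)}$ directly, or many short wrong turns occur, each supplying an independent factor whose product is of the same size. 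Combined with the counting bound, this yields the required per-path probability estimate, and $L$ can be chosen large enough to ensure $\gamma>\beta\log 2/L$, completing the proof.
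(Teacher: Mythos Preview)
Your union-bound framework forces you to establish a per-path probability of order $e^{-\gamma n}$ with $\gamma>\beta\log 2/L$, and the mechanism you propose for this---that the cumulative length of off-boundary (or wrong-bead) excursions is at least a positive fraction of $\beta n$---is not correct. A path $P$ can fail to lie in $\partial S(Y)$ while spending essentially all of its length on boundary edges and in the ``right'' bead: it need only switch between different boundary components (there are $2N$ of them) at a handful of the $\sim n^\delta$ lip junctions. Such a path has off-boundary excursion length at most $O(n^\delta\log n)$, not $\Omega(n)$, so the factor you extract is at best $(2k-1)^{-O(n^\delta\log n)}=e^{-n^c}$ with $c<1$. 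This is far too weak to survive a union bound over $e^{\eta n}$ paths. Relatedly, your ``second Chernoff-style estimate'' inside a single bead does not do what you want: the labels in $Y_i$ all come from $r_i^\pm$, so matching a subword of $r_i^\pm$ imposes no independent constraint.

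The paper avoids the union bound over paths entirely. The key observation (which your proposal misses) is the \emph{lip coincidence} mechanism: if $\gamma\subset Y$ matches $\gamma'\subset r$ and $\gamma\ne\gamma'$, then each time $\gamma'$ crosses one of the $\sim\beta n^\delta$ lips, the corresponding segment of $\gamma$ must also cross a lip at almost the same moment; since $\gamma$ and $\gamma'$ carry the same labels, this produces a second copy of the lip word (of length $C\log n$) within distance $n^\alpha$ of the first inside $r$. This is a property of $r$ alone---no enumeration of paths is needed. Each such coincidence has probability $O(n^{-\alpha'})$ for suitable $C$ and $\alpha$, and the Chernoff bound on $\sim n^\delta$ successive coincidences gives probability $O((n^{-\alpha'})^{n^\delta})=O(e^{-n^c})$. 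Note that the resulting exponent $c$ is genuinely strictly less than $1$; the lemma does not assert, and the argument does not yield, a bound of the form $e^{-\gamma n}$.
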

\begin{proof}
Let $\gamma$ be a path in $Y$ of length $\beta n$ and let $\gamma' \subset r$ (without
loss of generality) have the
same labels as $\gamma$. Then for any fixed $\alpha>0$ there is a $c>0$ so that
for each $i$ and each subsegment $\sigma'$ of $\gamma'$ of length $n^{\alpha}$ contained in
$B_i$ the corresponding subsegment $\sigma$ of $\gamma$ must have at least $(1-o(1))$ of its
length contained in $Y_i$, with probability $1-O(e^{-n^c})$. By the definition of the bead decomposition,
successive subsegments of $\gamma'$ in adjacent $B_i$ are joined by paths of length $C\log(n)$ running
over the lip. The corresponding subsegments in $\gamma$ that transition from $Y_i$ to $Y_{i+1}$
must {\em also} run over the lip, so there is another copy of the word on the lip contained in $r$
within distance $n^{\alpha}$ of the lip. 
If the two copies are not distinct, so that $\gamma$ and $\gamma'$ overlap on a common
path, then since $Y$ is folded we must simply have $\gamma = \gamma'$ and $\gamma$ is in $\partial S(Y)$
as claimed. Otherwise there are two distinct copies of the lip contained in a segment of length
$n^{\alpha}$ in $r$. See Figure~\ref{coincidence}.

\begin{figure}[htpb]
\centering
\labellist
\small\hair 2pt
 \pinlabel {$\gamma'$} at 62 20
 \pinlabel {$\gamma$} at 61 4
 \pinlabel {$\gamma'$} at 86 80
 \pinlabel {$\gamma$} at 84 141
 \pinlabel {$\tau$} at 275 130
 \pinlabel {$\tau'$} at 278 156
 \pinlabel {$\tau$} at 204 182
 \pinlabel {$\tau'$} at 324 68
 \pinlabel {$\tau$} at 178 28
 \pinlabel {$\tau'$} at 307 28
\endlabellist
\includegraphics[scale=0.64]{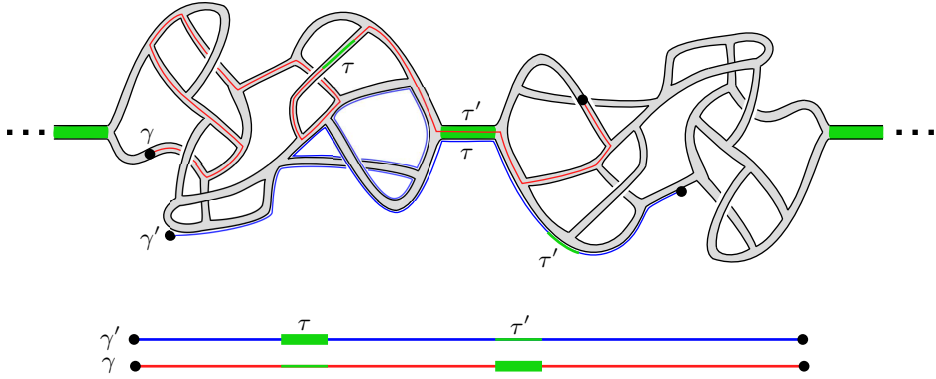}
\caption{The figure shows part of a surface obtained by fattening the spine $Y$, with boundary
contained in part of a single copy of $r$ (for simplicity).
Every time $\gamma' \subset r$ (blue) runs over a (thick green) 
lip $\tau$ (which has length $C\log(n)$), 
the path $\gamma$ (red) must also run over $\tau$ at almost the same time.
Since $\gamma$ and $\gamma'$ have the same labels, the copy of $\tau$ in $\gamma$ 
gives rise to a {\em coincidence}; i.e.\/ another copy 
$\tau'$ (thin green) of $\tau$ in $\gamma'$ within distance $n^\alpha$ of the first copy.}
\label{coincidence}
\end{figure}

If $\alpha$ is sufficiently small compared to $C$, the probability 
that two identical subwords of length $C\log(n)$ will occur in a specific segment of length $n^\alpha$
is arbitrarily small (in fact, of size $O(n^{-\alpha'})$ for some $\alpha'$ depending
on $\alpha$ and $C$). Explicitly, in a segment of length $n^\alpha$, the expected
length of the longest pair of identical subwords is $2\alpha\log_{2k-1}(n)$, so if we choose 
$C> 2\alpha/\log(2k-1)$ we obtain the desired estimate (with $\alpha'$ depending on the difference between
$C$ and $2\alpha/\log(2k-1)$). Remember that $C<(1-2\delta)/\log(2k-1)$ and our only constraint on
$\delta$ is that it is positive. So we can achieve $C>2\alpha/\log(2k-1)$ subject to this constraint if
$\alpha$ is sufficiently small.

If $\gamma'$ has length $\beta n$ it must therefore run over $\beta n^\delta$ successive
lips in this way, and each time it runs over a lip it must contain two identical (or inverse)
subwords of length $C\log(n)$ within distance $n^\alpha$ of each other; call each pair of identical
(or inverse) subwords a {\em coincidence}. Each coincidence occurs with probability $O(n^{-\alpha'})$.
The probability of successive coincidences at adjacent lips is not independent, but the Chernoff bound
says that the probability of $O(n^\delta)$ such coincidences in succession is of order 
$O((n^{-\alpha'})^{n^\delta}) = O(e^{-n^c})$ for some $c$, and the lemma is proved.
\end{proof}

We deduce the following theorem as a corollary:

\begin{theorem}[Random One-Relator]\label{theorem:surface_one_relator}
Fix a free group $F_k$ and let $r$ be a random cyclically reduced word of length $n$. Then
$G:=\langle F_k\; | \; r \rangle$ contains a 
surface subgroup with probability $1-O(e^{-n^c})$.
\end{theorem}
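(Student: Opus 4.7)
The plan is to construct a closed oriented surface $S$ with a cellular map to the presentation $2$-complex $K$ of $G$ such that the spine $Y$ of $S$ is a folded fatgraph with all edges long, and then certify $\pi_1$-injectivity via the small cancellation criterion of Lemma~\ref{lemma:injective_surface}. Every probabilistic statement below will hold with probability $1 - O(e^{-n^c})$, and a union bound over the finitely many bad events absorbs the polynomial factors.

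First I would apply the Bead Decomposition Lemma~\ref{lemma:bead_decomposition} simultaneously to $r$ and $r^{-1}$, producing $M = n^{\delta}(1+o(1))$ tagged beads $B_i$ of $r$ and their inverses $B_i^{-1}$ arising as beads of $r^{-1}$, each of length $n^{1-\delta}(1+o(1))$, joined pairwise along lips of length $C \log n$. For each $i$, the collection $B_i \cup B_i^{-1}$ is homologically trivial, has only two tags (spaced far above $4L$ apart), and is $(T,\epsilon)$-pseudorandom by Lemma~\ref{lemma:pseudorandom}. The Thin Fatgraph Theorem~\ref{theorem:thin_fatgraph} then yields a trivalent fatgraph $Y_i$ with every edge of length at least $L$, whose oriented boundary is $N \le 20L$ tagged copies of $B_i \cup B_i^{-1}$.

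Next I would assemble the pieces. The tags on $\partial S(Y_i)$ sit at the lips, and by construction the lip on the right of $B_i$ matches the lip on the left of $B_{i+1}$ (and similarly on the $r^{-1}$ side). Identifying $N$ copies of each lip of $Y_i$ with the corresponding lip of $Y_{i\pm 1}$ glues the $Y_i$ into a single trivalent fatgraph $Y$, with every edge of length at least $L$, whose oriented boundary is $N$ copies of $r$ together with $N$ copies of $r^{-1}$. Capping each boundary circle with the corresponding $2$-cell of $K$ produces a closed oriented surface $S$ over $K$ with spine $Y$; since $Y$ is trivalent and $\partial S(Y)$ is reduced, $Y$ is automatically folded. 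To finish, I would certify injectivity. An elementary counting argument shows that the presentation is $C'(\lambda)$ for every fixed $\lambda > 0$ with probability $1 - O(e^{-n^c})$, and $r$ is not a proper power with the same probability. By Lemma~\ref{lemma:injective_surface} it then suffices to check that $S$ is $1/2$-convex in the sense of Definition~\ref{definition:alpha_convex}. This is exactly the content of Lemma~\ref{lemma:no_long_piece}: applied with any $\beta < 1/2$, it gives, with probability $1 - O(e^{-n^c})$, that every path in $Y$ of length at least $\beta n$ which appears as a subword of $r^{\pm 1}$ must lie in $\partial S(Y)$, which implies $1/2$-convexity a fortiori.

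The step I expect to be the main obstacle is the global assembly of the $Y_i$: the Thin Fatgraph Theorem is applied independently on each bead pair, but to obtain a spine of a closed surface whose boundary is literally $N$ copies of $r$ (rather than $N$ disjoint copies of each individual bead) one must ensure that the tag data on $\partial S(Y_i)$ at each lip of $B_i$ matches the tag data at the corresponding lip of $Y_{i\pm 1}$, so that after identification the resulting loops are reduced and the fatgraph stays folded. Establishing this compatibility requires extracting slightly more from the proof of Theorem~\ref{theorem:thin_fatgraph} than is stated --- namely, enough control over the positions at which the prescribed tags of $\Gamma$ appear in $\partial S(Y)$ --- together with checking that the pseudorandomness hypotheses and the Chernoff-type estimates hold uniformly across the $n^{\delta}$ beads once we condition on the bead decomposition. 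Once this is arranged, the rest is a routine union bound.
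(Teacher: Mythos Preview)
Your proposal is essentially the paper's own argument: bead decomposition, Thin Fatgraph Theorem applied beadwise to $B_i\cup B_i^{-1}$, assembly into a spine $Y$, cap with disks, then $C'(1/6)$ plus Lemma~\ref{lemma:no_long_piece} feeding into Lemma~\ref{lemma:injective_surface}. The sequence of citations and the probabilistic bookkeeping match what the paper does.

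The one place where you are more worried than you need to be is the assembly step. You say one must ``extract slightly more'' from Theorem~\ref{theorem:thin_fatgraph} to control where the prescribed tags land in $\partial S(Y_i)$; but this is exactly what the tagged formulation already gives you. A tag is by definition the germ of the partial fatgraph at that point (\S\ref{subsection:tags}), so the label of the outgoing lip-edge is part of the input data to the theorem, not something to be recovered afterward. The output $Y_i$ is a \emph{partial} trivalent fatgraph whose $1$-valent boundary vertices sit precisely at the tag positions and are compatible with the specified germs; attaching the $N$ copies of each lip edge at those vertices therefore produces a genuinely trivalent, folded $Y$ with no further compatibility to check. The only residual freedom is which of the $N$ copies of a tag on the $B_i$ side gets matched to which on the $B_{i+1}$ side, and you may simply label the $N$ copies of $r$ and of $r^{-1}$ and glue within each label; any choice of bijections that yields $N$ honest copies of $r^{\pm 1}$ works. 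So no hidden strengthening of the Thin Fatgraph Theorem is required. Your second caveat --- that pseudorandomness of the individual beads must survive conditioning on the bead decomposition --- is a genuine point, and is exactly why the paper frames the construction of each $Y_i$ as depending only on the substring $r_i^\pm$, so that Chernoff bounds apply beadwise.
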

\begin{proof}
This follows from Lemma~\ref{lemma:no_long_piece} and Lemma~\ref{lemma:injective_surface}.
\end{proof}

There are $O(e^{n^c})$ many choices of bead decomposition, and
most of these give rise to quasiconvex surface subgroups.

\begin{definition}\label{definition:bead_surface}
We call the surfaces constructed as above {\em beaded surfaces}.
\end{definition}

Note that a beaded surface has genus $O(n)$. If $k=2$ the least genus of a beaded surface
is $o(n)$, since we can take $N=1$ in the application of the
Thin Fatgraph Theorem, and then as $n \to \infty$ we can take $L \to \infty$. It seems very likely
that the least genus of a beaded surface is $o(n)$ for any fixed $k\ge 2$.

\begin{remark}
It will turn out (after the proof of Theorem~\ref{theorem:surface_random_group}) 
that beaded surfaces are quasiconvex
(in fact, they stay quasiconvex even after adding many more random relators),
but it is more efficient to give the proof of this in the next section.
\end{remark}

\section{Random groups}\label{section:random_groups}

In this section we prove our main theorem, that a random group at density $D<1/2$ contains
a surface subgroup with probability $1-O(e^{-n^c})$. In fact, our argument shows that it contains
{\em many} subgroups (of genus $O(n)$). Our argument
depends on some elements of the theory of small cancellation developed for random
groups by Ollivier \cite{Ollivier_small}, and we refer to that paper several times.

\subsection{Small cancellation in random groups}\label{subsection:Ollivier_results}

For later convenience, we here state three results from Ollivier \cite{Ollivier_small}
that we use in the sequel.

\begin{theorem}[Ollivier, \cite{Ollivier_small}, Thm.~2]\label{theorem:Ollivier_isoperimetric_theorem}
Let $G$ be a random group at density $D$. Then for any positive
$\epsilon$, and any reduced van Kampen diagram
$\D$ containing $m$ disks, we have
$$|\partial \D| \ge (1-2D-\epsilon)\cdot nm$$
with probability $1-O(e^{-n^c})$
\end{theorem}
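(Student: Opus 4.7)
The plan is to reformulate the isoperimetric inequality as a bound on internal cancellation, and then estimate by a union bound over combinatorial types. Let $E_{\text{int}}$ denote the total number of letters lying on edges of $\D$ that are shared between two distinct faces. Since each of the $m$ faces has boundary length $n$, and the sum of face-boundary lengths double-counts the internal letters while counting the boundary letters once, we obtain the identity $nm = |\partial \D| + 2 E_{\text{int}}$. The theorem is therefore equivalent to the statement that, with probability $1-O(e^{-n^c})$, every reduced $\D$ with $m$ disks satisfies $E_{\text{int}} \le (D+\epsilon/2)nm$, simultaneously for all $m \ge 1$.

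First I would fix $m$ and decompose the enumeration of bad reduced diagrams with $m$ faces into three stages: (i) the combinatorial shape of $\D$, i.e.\ the abstract planar $2$-complex together with the assignment of which edges are internal versus boundary; (ii) a decoration of each face by one of the $\ell = (2k-1)^{Dn}$ relators, together with an orientation and a choice of starting vertex on its boundary (at most $(2n\ell)^m$ such decorations per shape); and (iii) the random event that the chosen relators are actually compatible with all prescribed internal-edge identifications. Because $\D$ is reduced, no two internal constraints are forced by one another, so stage (iii) succeeds with probability at most $(2k-1)^{-E_{\text{int}}}$.

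The crucial estimate is the number of shapes in stage (i): growing $\D$ face by face, each new face attaches along an arc of the current boundary, specified by a starting vertex and an arc length, giving at most $(n^2 m)^m$ shapes, which is $e^{o(nm)}$ in the regime of interest. Multiplying the three estimates, the expected number of bad diagrams with $m$ faces is at most
\[
e^{o(nm)} \cdot (2n\ell)^m \cdot (2k-1)^{-(D+\epsilon/2)nm} \le (2k-1)^{-\epsilon nm/4},
\]
which decays exponentially in $nm$. Summing this geometric series over $m \ge 1$ and applying Markov's inequality yields the desired probability bound $1 - O(e^{-n^c})$.

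The main obstacle is making the shape-count in stage (i) uniform in $m$: the naive encoding above is wasteful when $m$ is very large, and one also has to use the reducedness hypothesis carefully at the interface between stages (i) and (iii) to avoid double-counting identifications that are combinatorially forced rather than probabilistically free. Ollivier handles this by encoding abstract reduced van Kampen diagrams via a more efficient inductive scheme (essentially recording for each new face only a bounded amount of attaching data beyond its perimeter walk), and this combinatorial bookkeeping is the technical heart of the argument.
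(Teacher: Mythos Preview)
The paper does not prove this theorem at all: it is stated in \S6.1 purely as a citation of Ollivier's result, with the single remark that ``the hardest part is to show that the same $\epsilon$ works for van Kampen diagrams of arbitrary size.'' So there is no proof in the paper to compare your proposal against. Your sketch is broadly in the spirit of Ollivier's actual argument, and you correctly flag the uniformity-in-$m$ issue that the paper singles out.

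That said, your stage (iii) bound $(2k-1)^{-E_{\text{int}}}$ is not justified as written. The difficulty is not merely ``double-counting combinatorially forced identifications'': when the same relator $r_j$ labels several faces of $\D$, the internal-edge constraints are correlated constraints on a \emph{single} random word, and their joint probability is not the product of the marginals. Reducedness only excludes the trivial case of two adjacent faces carrying the same relator with cancelling orientations and offsets; it does not by itself make the remaining constraints independent. Ollivier's actual proof handles this by ordering the faces and processing the relators (not the faces) one at a time, carefully tracking which letters of each relator are already determined by previously revealed relators versus genuinely free --- this is the ``fulfilling of diagrams'' machinery in his paper. Your final paragraph gestures at an inductive encoding, but the issue you name (efficient shape-counting) is different from the issue that actually bites (correlated constraints from repeated relators), and you would need to address the latter explicitly for the argument to go through.
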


Here the hardest part is to show that the same $\epsilon$ works for van Kampen diagrams of
{\em arbitrary} size.

\begin{theorem}[Ollivier, \cite{Ollivier_small}, Cor.~3]\label{theorem:Ollivier_delta}
Let $G$ be a random group at density $D$. Then the hyperbolicity constant $\delta$
of the presentation satisfies $\delta \le 4n/(1-2D)$ with probability $1-O(e^{-n^c})$.
\end{theorem}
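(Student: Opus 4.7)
The plan is to derive the hyperbolicity bound directly from the linear isoperimetric inequality of Theorem~\ref{theorem:Ollivier_isoperimetric_theorem}, using the standard quantitative form of Gromov's theorem relating Dehn functions to hyperbolicity constants.

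First, I would reformulate Theorem~\ref{theorem:Ollivier_isoperimetric_theorem} in dual form: with probability $1-O(e^{-n^c})$, every null-homotopic loop $\gamma$ in the Cayley graph bounds a reduced van Kampen diagram of area (number of $2$-cells) at most $|\gamma|/((1-2D-\epsilon)n)$. This is a linear Dehn function with isoperimetric constant $K = 1/((1-2D-\epsilon)n)$, and every $2$-cell has perimeter exactly $n$. By Gromov's criterion, $G$ is hyperbolic; what remains is to extract the explicit bound $4n/(1-2D)$.

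Next I would carry out the classical bigon argument to get a quantitative $\delta$. Consider a geodesic bigon with sides $\alpha, \beta$ from $x$ to $y$ of length at most $\ell$, filled by a diagram $\D$. For a point $p\in\alpha$ at Cayley-graph distance $d$ from $\beta$, carve out the sub-diagram $\D_p\subset\D$ lying ``between $p$ and $\beta$'', bounded by a segment of $\alpha$ containing $p$ together with two shortest paths of length $\sim d$ from the endpoints of that segment to $\beta$. Its perimeter is $O(d)$. On the other hand, since each $2$-cell has diameter at most $n/2$, and the region has Cayley-graph height $d$, any filling by $2$-cells must stack roughly $2d/n$ layers deep; a counting of cells against the bounded width then forces area $\ge 2d/n$ times the width. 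Feeding this lower bound on area into the isoperimetric upper bound $|\partial \D_p|/((1-2D-\epsilon)n)$ and optimizing over the width yields $d \le 4n/(1-2D-\epsilon)$. A standard reduction from slim bigons to slim triangles transfers this directly into the hyperbolicity constant, and shrinking $\epsilon$ into the implicit constants recovers the stated bound $\delta \le 4n/(1-2D)$.

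The main obstacle is making the constant $4$ sharp in the middle step: the packing inequality relating the ``height'' $d$ of the slab $\D_p$ to its area must be matched against the perimeter bound with essentially no slack. This requires viewing $\D_p$ at the correct scale, treating each $2$-cell as contributing exactly $n$ to the perimeter-to-area exchange, and making sure that the two ``vertical'' geodesics forming the side of $\D_p$ are accounted for but not double-counted. Once this packing argument is executed carefully, the probabilistic input is used only through Theorem~\ref{theorem:Ollivier_isoperimetric_theorem}, so the overall failure probability remains $O(e^{-n^c})$.
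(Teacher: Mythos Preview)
The paper does not prove this statement at all: it is quoted verbatim as Corollary~3 of Ollivier \cite{Ollivier_small}, so there is no in-paper proof to compare against. That said, your overall strategy---derive the bound on $\delta$ from the linear isoperimetric inequality of Theorem~\ref{theorem:Ollivier_isoperimetric_theorem} via a quantitative form of Gromov's ``linear isoperimetry implies hyperbolicity''---is exactly the route Ollivier takes in the cited paper, so at the level of architecture you are on target.

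Where your proposal is soft is in the middle step. The bigon/slab argument you sketch is the right picture, but the passage from ``height $d$ forces roughly $2d/n$ layers of cells'' to an area lower bound matched against the isoperimetric upper bound is doing all the work, and as written it is not an argument but a heuristic. Getting the constant $4$ (rather than, say, $12$ or $18$) out of a hand-rolled packing argument is genuinely delicate; naive versions of this computation routinely lose multiplicative constants. Ollivier does not in fact carry out such a packing argument from scratch: he invokes an explicit quantitative lemma (due to Papasoglu/Short, in the form given in Ollivier's Lemma~5) which takes as input a linear isoperimetric constant and the maximal relator length and outputs the bound $\delta \le 4\ell/(1-2D)$ directly. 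If you want to reproduce the constant $4$, you should either cite that lemma or reproduce its proof carefully; your current sketch does not do enough to pin down the constant, and your own caveat about ``making the constant $4$ sharp'' correctly identifies this as the gap.
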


\begin{theorem}[Ollivier, \cite{Ollivier_small}, Thm.~6]\label{theorem:Ollivier_Greedlinger}
Let $G$ be a random group at density $D$. Then for any positive $\epsilon$, and
for any reduced van Kampen diagram $\D$ with at least two faces, there are at least
two faces which have a (connected) piece on $\partial \D$ of length at least
$n(1-5D/2 -\epsilon)$, with probability $1-O(e^{-n^c})$.
\end{theorem}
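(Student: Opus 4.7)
The plan is to derive this Greendlinger-type statement from two inputs: the isoperimetric inequality of Theorem~\ref{theorem:Ollivier_isoperimetric_theorem}, and a uniform upper bound on the length of \emph{pieces} in a random presentation. For the latter, a direct Chernoff/union bound over all pairs of locations in the $\ell=(2k-1)^{Dn}$ relators shows that for any $\eta>0$, with probability $1-O(e^{-n^c})$ no piece has length exceeding $n(D+\eta)$: the expected number of coincidences of length $n(D+\eta)$ between a pair of locations in the relators is $(2k-1)^{2Dn-n(D+\eta)(1+o(1))} = (2k-1)^{-n\eta+o(n)}$, which goes to zero exponentially.

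Fix a reduced van Kampen diagram $\D$ with $m\geq 2$ faces. For each face $f$, let $b(f)$ denote the total length of $\partial f$ lying on $\partial\D$, let $N(f)$ denote the number of maximal arcs of $\partial f$ that are interior edges of $\D$, and let $p(f)$ denote the length of the longest connected arc of $\partial f$ on $\partial\D$. Since arcs along $\partial f$ alternate between interior and boundary arcs, and each interior arc has length at most $n(D+\eta)$ by the piece bound, we obtain $n-b(f)\leq N(f)\cdot n(D+\eta)$ and, by pigeonhole, $p(f)\geq b(f)/N(f)$ whenever $N(f)>0$. In particular faces with $N(f)=1$ automatically satisfy $p(f)\geq n(1-D-\eta)$, which is far larger than $n(1-5D/2-\epsilon)$.

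The core argument is a greedy dismantling. First I would use the isoperimetric bound $|\partial\D|\geq (1-2D-\epsilon/2)nm$ to show that the average of $b(f)$ across faces is at least $n(1-2D-\epsilon/2)$, so some face $f_1$ satisfies $b(f_1)\geq n(1-2D-\epsilon/2)$; combining with the alternation estimate forces $N(f_1)\leq 2$ (for $D<1/2$ and $\eta$ small). The worst case is $N(f_1)=2$, where $b(f_1)\geq n(1-2D-2\eta)$ is split into two boundary arcs. The key refinement giving the constant $5D/2$ rather than $2D$ is that the two interior arcs bordering $f_1$ can be chosen to be the \emph{shorter} ones, by a local averaging argument: if one of the two adjacent interior edges had length close to $n(D+\eta)$, redistributing along $\partial f_1$ sharpens the longer boundary arc by $nD/2$, yielding $p(f_1)\geq n(1-5D/2-\epsilon)$. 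Having produced one such face, remove it from $\D$ to get a new diagram $\D'$ with $m-1$ faces; apply Theorem~\ref{theorem:Ollivier_isoperimetric_theorem} again to $\D'$ and repeat the argument to obtain a second face $f_2$.

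The main obstacle I anticipate is the removal step: after deleting $f_1$ the resulting diagram may no longer be reduced, since neighbors of $f_1$ may now share large common boundary in $\D'$. Controlling this requires the piece bound to rule out large-scale reductions while allowing small ones, and one must choose $\eta$ and the base of the greedy step so that each iteration loses only a negligible amount of isoperimetric slack. A secondary difficulty is verifying the sharp constant $5D/2$ in the $N(f)=2$ case, for which the redistribution argument sketched above must be carried out with enough care to absorb the small error term $\eta$ into $\epsilon$.
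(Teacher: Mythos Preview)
The paper does not give its own proof of this statement; it is quoted verbatim as Theorem~6 of Ollivier \cite{Ollivier_small} and used as a black box. So there is no proof in the paper to compare against, and your sketch must stand on its own.

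As written, the argument has a genuine gap at the step where you conclude $N(f_1)\le 2$. Your alternation estimate $n-b(f)\le N(f)\cdot n(D+\eta)$ only gives a \emph{lower} bound on $N(f)$, not an upper bound: nothing prevents $f_1$ from having many short interior arcs interleaved with many short boundary arcs, even when $b(f_1)\ge n(1-2D-\epsilon/2)$. Without an upper bound on $N(f_1)$ the pigeonhole step $p(f_1)\ge b(f_1)/N(f_1)$ yields nothing useful, and the subsequent ``redistribution'' paragraph (which is already too vague to parse --- one cannot redistribute edge lengths in a fixed diagram) has no foundation. This is where the specific constant $5D/2$ is supposed to emerge, so the heart of the proof is missing.

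There is also a problem with the induction step that is more serious than the one you flag. Removing $f_1$ does not create new adjacencies, so $\D'$ remains reduced; that part is fine. The real issue is that applying the argument to $\D'$ produces a face $f_2$ with a long connected arc on $\partial\D'$, and $\partial\D'$ includes the former \emph{interior} arcs of $f_1$. For $D>2/7$ one has $n(1-5D/2)<nD$, so the piece bound alone does not prevent $f_2$'s long arc from lying largely (or entirely) on one of those former interior arcs rather than on $\partial\D$. You would need additional control to transfer the conclusion back to $\partial\D$. Ollivier's actual argument avoids this by working with the isoperimetric inequality applied simultaneously to $\D$ and to carefully chosen subdiagrams, rather than by a naive peel-and-recurse; I would suggest consulting \cite{Ollivier_small} directly for the mechanism that produces the $5D/2$ constant.
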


The statements of theorems in Ollivier's paper do not make the estimate of probability
(as a function of $n$) explicit; however these estimates are straightforward to
derive from his methods (and in any case, we do not use them in the sequel).

\subsection{Convexity}

We now indicate how to use small cancellation arguments to find a surface subgroup at any
$D<2/7$. This is proved by a counting argument (Lemma~\ref{lemma:density_convexity}), 
which is a model for the general case $D<1/2$ (proved in Theorem~\ref{theorem:surface_random_group}).

Pick one relator $r$, and build a beaded surface $S$ as in \S~\ref{section:one_relator_group}
whose spine is trivalent and with every edge of length $\ge L$ for some large (fixed) $L$.

\begin{lemma}\label{lemma:density_convexity}
Fix $D$. Then for any $\alpha > D$, a beaded surface $S$ constructed by the method of 
\S~\ref{section:one_relator_group} is $\alpha$-convex, with probability
$1-O(e^{-n^c})$.
\end{lemma}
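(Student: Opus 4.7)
The plan is to split the proof into two cases according to whether the offending long subword in the spine $Y$ of $S$ matches the relator $r$ used to build $S$, or some other relator $r_i$ with $i\ge 2$. The first case is essentially already done by the work of Section~\ref{section:one_relator_group}; the second is a straightforward first-moment/counting argument using the independence of $r$ from $r_2,\ldots,r_\ell$.

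For the first case, suppose $\gamma$ is an immersed path in $Y$ which is a subword of $r^{\pm 1}$ with $|\gamma|/n \ge \alpha$. Applying Lemma~\ref{lemma:no_long_piece} with $\beta := \alpha$ yields some $c>0$ so that with probability $1-O(e^{-n^c})$ every such $\gamma$ lies in $\partial S(Y)$, as required. The beaded surface $S$, being constructed entirely from $r$ and the data of the bead decomposition of $r$, is measurable with respect to $r$, so the other relators $r_2,\ldots,r_\ell$ remain independent of $Y$.

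For the second case, we count. By the estimate recalled at the beginning of this section, the trivalent spine $Y$ (which has $|Y|=O(n)$ and every edge of length $\ge L$) contains at most $O(n)\cdot 2^{\alpha n/L}$ immersed subpaths of length $\alpha n$. For a fixed reduced word $w$ of length $\alpha n$, the probability that $w$ appears as a subword of a single independent random cyclically reduced relator of length $n$ is at most $O(n\cdot(2k-1)^{-\alpha n})$ (up to the exponentially decaying correlation corrections of \S~\ref{subsection:independence}). Summing over the $2(\ell-1) \le 2(2k-1)^{Dn}$ remaining relators and their inverses, and using independence of these relators from $Y$, the expected number of (path in $Y$, relator) pairs witnessing a failure of $\alpha$-convexity from Case 2 is at most
\[
O(n) \cdot 2^{\alpha n/L} \cdot (2k-1)^{Dn} \cdot O(n\cdot(2k-1)^{-\alpha n})
= O(n^2)\cdot\bigl(2^{\alpha/L}(2k-1)^{D-\alpha}\bigr)^n.
\]
Since $\alpha > D$, we may fix $L$ large enough (depending only on $\alpha - D$) so that $2^{\alpha/L}(2k-1)^{D-\alpha}<1$; this ensures the expectation decays exponentially, and a Markov inequality together with the exponential tail bound from Case 1 finishes the proof.

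The main (and only) obstacle is the balancing of constants: $L$ must be chosen large enough in terms of $\alpha-D$ to defeat the entropy $2^{\alpha/L}$ of subpaths of $Y$ against the small-cancellation gain $(2k-1)^{\alpha-D}$ coming from density $D<\alpha$. Since the Thin Fatgraph Theorem~\ref{theorem:thin_fatgraph} permits an arbitrarily large prescribed lower bound on edge lengths (at the cost only of the multiplicative constant $N$ and the genus), this imposes no genuine restriction — we simply fix $\alpha \in (D, 2/7]$ first, then pick $L$ sufficiently large, and then build the beaded surface $S$ with edges of length $\ge L$.
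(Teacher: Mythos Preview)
Your proposal is correct and follows essentially the same approach as the paper: handle the relator $r$ used to build $S$ via Lemma~\ref{lemma:no_long_piece}, then for the remaining relators count the $O(2^{\alpha n/L})$ paths of length $\alpha n$ in $Y$ against the $(2k-1)^{Dn}$ independent relators, and choose $L$ large enough that $2^{\alpha/L}(2k-1)^{D-\alpha}<1$. The only quibble is the restriction to $\alpha\in(D,2/7]$ in your final paragraph, which is unnecessary --- the lemma is stated (and your argument works) for any $\alpha>D$.
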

\begin{proof}
By Lemma~\ref{lemma:no_long_piece} for any positive $\beta$, the spine $Y$ of $S$
has no subword of length $\beta n$ in common with the relator $r$ except for
subwords occurring in $\partial S(Y)$. For any $\alpha>0$ there are $O(2^{\alpha n/L})$
paths in $Y$ of length $\alpha n$. Define $\beta'=\log(2)\alpha/L$ so that
$e^{\beta' n}=2^{\alpha n/L}$, and note that by taking $L$ sufficiently large, we can make
$\beta'$ as small as we want.

There are $(2k-1)^{\alpha n}$ reduced words of length $\alpha n$,
and a random relator $r'$ contains $2n$ subwords of this length counting inverses
(which is polynomial in $n$, and therefore
is absorbed into the exponential terms in our estimates), so a random relator $r'$ 
has probability $O(e^{\beta' n -\log(2k-1)\alpha n})$ of having a subword of length $\alpha n$ 
in common with $Y$. If $\alpha > D$ and $\beta' < \log(2k-1)(\alpha -D)$ then no relator
$r'\ne r$ has a subword of length $\alpha n$ in common with $Y$, with probability $O(e^{-n^c})$.
\end{proof}

We deduce by Lemma~\ref{lemma:injective_surface} that a random group contains a surface
subgroup at any $D<1/12$. However, Theorem~\ref{theorem:Ollivier_Greedlinger} already 
improves this to $D<2/7$.

\subsection{van Kampen disks}

Our strategy will be to show that the existence of a certain kind of van Kampen
disk $\D$ with boundary a cyclically reduced word in $Y$ essential in $S$ gives
rise to a contradiction. Suppose that $\gamma\subset Y$ is an essential loop in $S$
whose image is trivial in $G$, so that there is some van Kampen diagram $\D$
with boundary $\gamma$. If some face in $\D$ has boundary $r$ or $r^{-1}$, and
if this face has a segment of length more than $\beta n$ in common with $\gamma$, then this face
agrees with a disk of $S$, and we can find a smaller van Kampen diagram $\D'$ by
pushing across this disk. So in the sequel we will only consider 
loops $\gamma \subset Y$ essential in $S$ 
bounding van Kampen disks $\D$ which cannot be simplified by such a move.
We call such a van Kampen disk {\em efficient}.

The following Lemma is standard.

\begin{lemma}[Short shortcut]\label{lemma:short_shortcut}
Let $G$ be a hyperbolic group with a presentation with respect to which it is
$\delta$-hyperbolic. Let $\Gamma$ be a cyclically reduced word in the generators
which is trivial in $G$. Then there is a van Kampen disk $\D$ with 
$|\partial \D| \le 18\delta$ 
and a connected subpath $\gamma \subset \partial \D$ with $\gamma \subset \Gamma$
and $|\gamma|> |\partial \D|/2$.
\end{lemma}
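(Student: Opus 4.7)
My plan is to dispose of the short case $|\Gamma| \le 18\delta$ directly, and in the long case $|\Gamma| > 18\delta$ to extract a genuine shortcut by appealing to the local-to-global principle for geodesics in hyperbolic spaces.

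If $|\Gamma| \le 18\delta$, then since $\Gamma$ is trivial in $G$ it bounds some van Kampen diagram $\D$ with $\partial\D = \Gamma$. Taking $\gamma := \Gamma$ gives $|\gamma| = |\partial\D| > |\partial\D|/2$, and the conclusion holds trivially.

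So assume $|\Gamma| > 18\delta$. I would walk along $\Gamma$ and examine each connected subpath $\sigma \subset \Gamma$ of length $9\delta$ (rounding up if $\delta$ is not an integer), with endpoints $a, b$. Since $\sigma$ itself is a path from $a$ to $b$, any geodesic $\rho$ from $a$ to $b$ in the Cayley graph satisfies $|\rho| \le 9\delta$. If for some choice of $\sigma$ we in fact have $|\rho| < 9\delta$, then $\sigma \cdot \rho^{-1}$ is a loop of length $< 18\delta$ representing the identity in $G$, hence bounds a van Kampen disk $\D$. Setting $\gamma := \sigma$ yields $|\gamma| = 9\delta$ and $|\partial\D| = 9\delta + |\rho|$, so $|\gamma| > |\partial\D|/2$ precisely because $|\rho| < 9\delta$, and the conclusion holds.

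It remains to exclude the alternative: that \emph{every} length-$9\delta$ subpath of $\Gamma$ is a geodesic, i.e.\ that $\Gamma$ is a $9\delta$-local geodesic loop. Here I would invoke the standard local-to-global principle (as in Bridson--Haefliger, Ch.\ III.H): in a $\delta$-hyperbolic geodesic space, any $k$-local geodesic with $k > 8\delta$ is a global quasi-geodesic, with constants depending only on $\delta$. A nontrivial quasi-geodesic cannot close up into a loop, contradicting $|\Gamma| > 18\delta$.

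The one step that requires care is the bookkeeping of constants: one must check that the threshold $9\delta$, together with the quasi-geodesic constants furnished by the local-to-global theorem, genuinely rules out a $9\delta$-local geodesic loop of length $> 18\delta$. This is a routine exercise in rerunning the stability-of-quasi-geodesics argument with explicit constants, and is the only place where the specific numerical values in the statement come into play.
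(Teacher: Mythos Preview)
Your argument is correct and is essentially the same as the paper's: both invoke the Bridson--Haefliger local-to-global theorem (a $k$-local geodesic with $k>8\delta$ is a global quasigeodesic) to conclude that the loop $\Gamma$ cannot be a $9\delta$-local geodesic, and then take a non-geodesic $9\delta$-segment together with its geodesic replacement as $\partial\D$. The only differences are cosmetic --- you split off the short case $|\Gamma|\le 18\delta$ explicitly and flag the constant-checking as a separate step, whereas the paper absorbs both into the single observation that a loop is never a $9\delta$-local geodesic.
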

Note that if $\gamma' = \partial \D - \gamma$ then $|\gamma'|<|\gamma|$. In other words,
$\gamma'$ is a {\em shortcut}; hence the terminology.
\begin{proof}
In any $\delta$-hyperbolic path metric space, for any $k>8\delta$, 
a $k$-local geodesic (i.e.\/ a $1$-manifold for which every subpath of length
at most $k$ is a geodesic) 
is a (global) $(\frac{k+4\delta}{k-4\delta},2\delta)$-quasigeodesic; see \cite{Bridson_Haefliger},
Ch.~III. H, 1.13 p.~405. The loop $\Gamma$ starts and ends at the same point, and
is therefore not a $k$-local geodesic for $k\ge 9\delta$. Therefore some segment
of length at most $9\delta$ is not geodesic, and it cobounds a van Kampen disk $\D$
with an honest geodesic. 
\end{proof}

We deduce the following corollary:

\begin{lemma}\label{lemma:small_diagram_shortens}
Suppose that $S$ is a beaded surface which is not $\pi_1$-injective.
Then there are constants $C$ and $C'$ depending only on $D<1/2$,
a geodesic path $\gamma$ in the spine $Y$ of length
at most $Cn$, and a van Kampen diagram $\D$ containing at most $C'$ faces
so that $\gamma \subset \partial \D$ and $|\gamma|>|\partial \D|/2$.
\end{lemma}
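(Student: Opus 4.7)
The plan is to combine Ollivier's quantitative small-cancellation theorems with the short shortcut lemma to produce $\gamma$ and $\D$ with the required bounds. The main ingredients are Theorems~\ref{theorem:Ollivier_delta} and~\ref{theorem:Ollivier_isoperimetric_theorem} together with Lemma~\ref{lemma:short_shortcut}.

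First I would concretize the failure of injectivity. Since $S$ is not $\pi_1$-injective, there is an essential loop in $S$ that is null-homotopic in the presentation complex $K$; after a homotopy in $S$ we may assume it is carried by the spine $Y$, and among all such representatives we pick a loop $c$ of shortest combinatorial length in $Y$. Because $f: Y \to X$ is folded, the image $\Gamma := f(c)$ is a cyclically reduced word in the generators which is trivial in $G$.

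Next I would invoke hyperbolicity. By Theorem~\ref{theorem:Ollivier_delta}, with probability $1-O(e^{-n^c})$ the presentation is $\delta$-hyperbolic with $\delta \le 4n/(1-2D)$. Applying Lemma~\ref{lemma:short_shortcut} to $\Gamma$ produces a van Kampen diagram $\D$ with $|\partial \D| \le 18\delta \le 72n/(1-2D)$ and a connected subpath $\gamma \subset \partial \D$ with $\gamma \subset \Gamma$ and $|\gamma| > |\partial \D|/2$. By foldedness of $f$, this $\gamma$ lifts uniquely to an immersed path in $Y$, and its length is bounded by $|\partial \D|$, so $C := 72/(1-2D)$ suffices.

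Finally I would use the linear isoperimetric inequality to bound the face count. By Theorem~\ref{theorem:Ollivier_isoperimetric_theorem}, for any fixed $\epsilon>0$ we have $|\partial \D| \ge (1-2D-\epsilon) n m$ with overwhelming probability, where $m$ is the number of faces of $\D$. Combined with $|\partial \D| \le 72n/(1-2D)$, this gives
\[
m \;\le\; \frac{72}{(1-2D)(1-2D-\epsilon)} =: C',
\]
a constant depending only on $D$. The point requiring the most care, I expect, is verifying that the lifted $\gamma$ is genuinely a geodesic segment of $Y$ rather than merely a reduced path: if $\gamma$ admitted a strictly shorter replacement $\tilde\gamma$ in $Y$ between its endpoints, then splicing $\tilde\gamma$ in place of $\gamma$ inside $c$ would produce a shorter loop in $Y$ still trivial in $G$ and still essential in $S$ (because the splice takes place entirely inside the subcomplex $Y \subset S$), contradicting the minimality in the choice of $c$. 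Thus minimality of $c$ upgrades ``reduced'' to ``geodesic'' at no cost, completing the argument.
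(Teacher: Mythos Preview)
Your overall approach is exactly the paper's: invoke Ollivier's bound $\delta\le 4n/(1-2D)$ (Theorem~\ref{theorem:Ollivier_delta}), feed it into Lemma~\ref{lemma:short_shortcut} to get $|\partial\D|\le 72n/(1-2D)$ with $\gamma\subset\Gamma$ and $|\gamma|>|\partial\D|/2$, then use Theorem~\ref{theorem:Ollivier_isoperimetric_theorem} to bound the number of faces by a constant depending only on $D$. The constants you extract match the paper's.

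The one place your argument does not hold up is the final paragraph, where you try to upgrade ``reduced in $Y$'' to ``geodesic in $Y$'' via minimality of $c$. If $\tilde\gamma$ is a strictly shorter path in $Y$ with the same endpoints as $\gamma$, the spliced loop $c'$ need \emph{not} be trivial in $G$: the difference $f(\gamma)\cdot f(\tilde\gamma)^{-1}$ is an arbitrary element of $F_k$, and nothing forces it to die in $G$. Likewise there is no reason $c'$ remains essential in $S$; since $\gamma$ and $\tilde\gamma$ are distinct reduced paths in the graph $Y$, they are not homotopic rel endpoints, so $c'$ lies in a different class of $\pi_1(Y)=\pi_1(S)$ than $c$, possibly the trivial one. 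So the minimality of $c$ gives you no contradiction.

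Fortunately this does not damage the lemma as it is actually used. The only property of $\gamma$ exploited downstream (in the proof of Theorem~\ref{theorem:surface_random_group}) is the count $O(|Y|\cdot 2^{|\gamma|/L})$ of candidate paths, and that count is over \emph{immersed} (locally geodesic) paths in the trivalent graph $Y$; it does not require $\gamma$ to be globally shortest. Since $\gamma$ is a subarc of the immersed loop $c$, it is automatically immersed, and that is all you need. The paper's own proof is silent on this point for the same reason.
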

\begin{proof}
Theorem~\ref{theorem:Ollivier_delta} says that $\delta \le 4n/(1-2D)$,
so by Lemma~\ref{lemma:short_shortcut} it follows that there is such a disk $\D$
with boundary of length at most $72n/(1-2D)$.
On the other hand, by Theorem~\ref{theorem:Ollivier_isoperimetric_theorem}
we know
$72n/(1-2D) \ge |\partial \D| \ge (1-2D-\epsilon)\cdot nC'$
where $C'$ is the number of faces; in particular, $C'$ is {\em bounded} in
terms of $D$ (and {\em independent} of $n$).
\end{proof}

The fact that $C$ and $C'$ can be chosen {\em independent} of
$n$ (but depending on $D<1/2$ of course) is crucial for our purposes.

\subsection{Surfaces in random groups}

We can now prove the main theorem of the paper.

\begin{theorem}[Surfaces in random groups]\label{theorem:surface_random_group}
A random group of length $n$ and density $D<1/2$ contains a surface subgroup with 
probability $1-O(e^{-n^c})$. In fact, it contains $O(e^{n^c})$ surfaces of genus $O(n)$.
Moreover, these surfaces are quasiconvex.
\end{theorem}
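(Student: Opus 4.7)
The plan is to build beaded surfaces as in Section~\ref{section:one_relator_group} over a single randomly chosen relator, say $r_1$, and then to show that with overwhelming probability over the remaining relators such a surface is $\pi_1$-injective in the full random group $G$. Concretely, fix $L$ large (to be determined by $D$), apply Lemma~\ref{lemma:bead_decomposition} and the Thin Fatgraph Theorem~\ref{theorem:thin_fatgraph} to each pair $B_i \cup B_i^{-1}$ of beads of $r_1$ to produce a trivalent spine $Y$ every edge of which has length at least $L$; this is the beaded surface $S$ of Definition~\ref{definition:bead_surface}. By the Random One-Relator Theorem~\ref{theorem:surface_one_relator} the relevant ``no long coincidence'' property of $Y$ with respect to $r_1$ holds with overwhelming probability.

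Next I would assume for contradiction that $S$ is not $\pi_1$-injective and invoke Lemma~\ref{lemma:small_diagram_shortens}: there exists an efficient reduced van Kampen diagram $\D$ with at most $C'=C'(D)$ faces and total boundary length at most $Cn$, containing a connected subpath $\gamma \subset \partial\D$ which lies in $Y$ and satisfies $|\gamma| > |\partial\D|/2$. Crucially $C'$ is independent of $n$. The argument then reduces to estimating, over the random choice of relators, the probability that any such $\D$ exists. I would enumerate $\D$ by: (i) the combinatorial type of the underlying planar 2-complex (finitely many possibilities for fixed $C'$), (ii) the assignment of each of its at most $C'$ faces to one of the $\ell = (2k-1)^{Dn}$ relators and an orientation/starting letter, and (iii) the position of $\gamma$ in $Y$. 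Since $Y$ is trivalent with every edge of length at least $L$, the number of immersed paths in $Y$ of length $m$ is bounded by $2|Y|\cdot 2^{m/L}$, which after choosing $L$ large compared to $1/(\frac12 - D)$ is negligible in comparison with $(2k-1)^{m}$.

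The main obstacle, and the heart of the proof, is to show that for a fixed combinatorial type the probability that a matching configuration actually arises among the random relators is at most $e^{-cn}$. The key is an accounting in the spirit of Ollivier's derivation of the $C'(\epsilon)$ property: every letter of each face boundary is either on $\gamma$ (forced to match a letter of the spine, which is already determined by $r_1$ and thus independent of the other random relators appearing in $\D$), on the shortcut $\partial\D - \gamma$, or on an internal edge shared with a neighboring face (a piece). Summing lengths over faces gives $|\partial\D| + 2(\mathrm{internal})$ = $mn$, so by Theorem~\ref{theorem:Ollivier_isoperimetric_theorem} the total constrained portion (internal edges together with the $\gamma$-part of the boundary, and deducting at most one face per face for the distinguished letter supplying the positional choice) constitutes a fraction strictly greater than $D$ of $mn$ whenever $D<1/2$. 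Each such constrained letter kills a factor $(2k-1)^{-1}$ in the probability of a particular match, while only $\ell^{C'} = (2k-1)^{DnC'}$ many face-labellings are available; the difference is negative linear in $n$, with a deficit controlled by $\frac12 - D$. The ``no long coincidence'' content of Lemma~\ref{lemma:no_long_piece} together with the shortness of the spine's ``path catalogue'' absorbs the combined polynomial factors from steps (i) and (iii), yielding an overall bound of $O(e^{-n^c})$ after union-bounding. This rules out all bad diagrams simultaneously and proves $\pi_1$-injectivity.

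Finally, the count of $O(e^{n^c})$ surfaces comes from the exponentially many choices of bead decomposition (each bead-boundary chosen among $O(n^{1-2\delta})$ positions, for $O(n^\delta)$ beads, giving $e^{n^c}$ combinatorial types), which yield distinct beaded surfaces of genus $O(n)$ by construction. For quasiconvexity, I would run essentially the same argument at the level of the Cayley graph: any element of $G$ represented by a short word in the generators which lies in $\pi_1(S)$ must, by the short-shortcut lemma together with the same combinatorial counting, actually be represented by a bounded-length word on $Y$, yielding a linear comparison between intrinsic and extrinsic word length with a constant depending only on $D<1/2$ and $L$. The genuinely hard step is the probabilistic counting over diagrams of bounded face count; everything else is either an application of the already-developed Section~\ref{section:one_relator_group} machinery or a routine extraction from Ollivier's small cancellation results cited in \S~\ref{subsection:Ollivier_results}.
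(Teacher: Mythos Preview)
Your overall architecture matches the paper's: build a beaded surface $S$ over one relator $r=r_1$, suppose it is not injective, invoke Lemma~\ref{lemma:small_diagram_shortens} to get a van Kampen disk $\D$ with boundedly many faces and $|\gamma|>|\partial\D|/2$, then do a degrees-of-freedom count over the random relators. The arithmetic $|\partial\D|+2|\I|=mn$ together with $|\gamma|\ge|\partial\D|/2$ giving $|\gamma|+|\I|\ge mn/2>mnD$ is exactly the paper's endgame.

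There is, however, a genuine gap in your counting. You treat every constrained letter on $\gamma$ or on an interior edge as killing a factor $(2k-1)^{-1}$ against the $\ell^{C'}$ face-labellings. But some faces of $\D$ may be labelled by $r_1$ itself, and for those faces there is no randomness left: once you condition on $r_1$ (which you must, since $Y$ is built from it), an $r_1$-labelled face either matches its constraints deterministically or it doesn't. No probability is killed. Your union over $\ell^{C'}$ labellings therefore includes configurations where the ``constraints'' are not probabilistic at all, and the estimate breaks down precisely at those terms.

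The paper's fix is to excise the $r_1$-faces before counting. Two inputs make this possible: (i) efficiency of $\D$ means no $r_1$-face shares a segment of length $\ge\beta n$ with $\gamma$ (this is where Lemma~\ref{lemma:no_long_piece} is actually used, not merely to absorb polynomial factors), and (ii) a single random $r_1$ has no self-piece of length $\ge\beta n$, so no two $r_1$-faces share a long segment with each other. Since each face has at most $m\le C'$ boundary segments, every $r_1$-face has at most $m\beta n$ of its boundary on $\gamma$ or on other $r_1$-faces. Cutting these faces out yields a subdiagram $\D'$ with $m'$ faces all labelled by relators $\ne r_1$, and with a new path $\gamma'\subset\partial\D'$ (the old $\gamma\cap\partial\D'$ together with the newly exposed boundary) still satisfying $|\gamma'|\ge|\partial\D'|/2$ provided $m\beta<1/2$. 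Now the count $|\gamma'|+|\I'|\ge m'n/2>m'nD+n\beta'$ is legitimate, because every face of $\D'$ is an honestly random relator independent of $Y$.

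Your quasiconvexity sketch is vaguer than the paper's but in the right spirit; the paper simply reruns the same diagram count with $C'$ replaced by $C'k/9\delta$ for arbitrary $k>9\delta$, concluding that geodesics in $Y$ map to $k$-local geodesics in the Cayley graph, hence to global quasigeodesics by the local-to-global principle.
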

\begin{proof}
Pick one relation $r$ and build a beaded surface $S$ by the method of \S~\ref{section:one_relator_group}.
We have already shown that the $1$-skeleton $Y$ of $S$ does not contain a path of length
$\beta n$ for any fixed positive $\beta$ 
in common with $r$ or $r^{-1}$ except for paths in the boundary
of a disk, with the desired probability. 

Suppose $S$ is not $\pi_1$-injective. Then by Lemma~\ref{lemma:small_diagram_shortens}
there is an efficient van Kampen disk with boundary an essential loop in $Y$,
containing a subdisk $\D$ with at most $C'$ faces, and at least half of
its boundary equal to some path $\gamma$ in the spine $Y$. 
We want to show that the existence of such a van
Kampen disk is very unlikely, for fixed $D<1/2$, and for $n$ sufficiently big.

Fix a combinatorial type for the diagram. Then there are at most polynomial in $n$
choices of edge lengths for the edges in the diagram. Choose a collection of edge
lengths. Let $m\le C'$ be the number of faces.

We estimate the probability that there is a way to label each face with a relator
or its inverse compatible with some $\gamma$. We express the count in terms of
{\em degrees of freedom}, measured multiplicatively, as powers of $(2k-1)$
(Gromov uses the terms {\em density} and {\em codensity}; 
see the discussion in \cite{Gromov_asymptotic} pp. 269--272 expanded 
at length in \cite{Ollivier}).

First, the choice of $\gamma$ itself gives $n\beta'$ degrees of freedom, where
$\beta'=\log(2)\alpha/L$, and $|\gamma|=\alpha n$, as in the proof of
Lemma~\ref{lemma:density_convexity}. Since $\alpha\le C'/2$ is an absolute
constant depending only on $D$, by choosing $L$ big enough we can make $\beta'$ as small as
desired (we only really need $\beta' < 1/2-D$), 
and therefore we may effectively neglect it in what follows.

Next we consider the disks with boundary label $r$ or $r^{-1}$. Since the original disk
was efficient, no face of $\D$ with boundary label $r$ or $r^{-1}$ has a segment of
more than $\beta n$ in common with $\gamma$. Furthermore, a fixed random relation $r$ of 
length $n$ will have no piece in common with itself or its inverse
of length $\epsilon'' n$ for any positive $\epsilon''$, with probability $1-O(e^{-Cn})$; if we
take $\epsilon'' = \beta$ for simplicity, we deduce that in a reduced diagram,
no two disks with boundary label $r$ or $r^{-1}$ share a segment of their boundary in common
of length more than $\beta n$.

In a van Kampen disk with at most $m$ faces, the boundary of each face 
is decomposed into at most $m$ segments, each of which is shared with another
face or with the boundary. Thus each face with boundary
label $r$ or $r^{-1}$ has at most $m\beta n$ of its boundary in common with $\gamma$ or with
other faces with label $r$ or $r^{-1}$. Let $\D'$ be the subdiagram obtained by cutting
out the faces with boundary label $r$ or $r'$, and let $\gamma'\subset \partial\D'$ be the union
of $\gamma \cap \partial\D'$ with $\partial \D' - \partial \D$. Finally, let $m'$ be the
number of faces in $\D'$. Taking $\beta$ sufficiently small, we can assume that
$m\beta < 1/2$, and therefore $|\gamma'|\ge|\gamma|$ so that $|\gamma'|\ge |\partial\D'|/2$ 
and $m'\le m$, with equality if and only if $\D'=\D$.

Each remaining choice of face gives $nD$ degrees of freedom, and each segment in the interior
of length $\ell$ imposes $\ell$ degrees of constraint. Similarly, 
$\gamma'$ imposes $|\gamma'|$ degrees of constraint. Let $\I$ denote the union of interior
edges. Then $|\partial \D'|+2|\I|=nm'$ so $|\gamma'|+|\I|\ge nm'/2$ because $|\gamma'|\ge |\partial \D'|/2$.
On the other hand, the total degrees of freedom is $nm'D+n\beta'<nm'/2$, so no assignment
is possible, with probability $1-O(e^{-n^c})$. Summing the exceptional cases over
the polynomial in $n$ assignments of lengths, and the {\em finite} number of
combinatorial diagrams, we see that $S$ is injective, with probability 
$1-O(e^{-n^c})$ for some $c$ depending on $D$ (and going to $0$ as $D \to 1/2$).

In fact, the same argument implies that every geodesic path in the 1-skeleton of
$Y$ is actually quasigeodesic in $K$, by the proof of
Lemma~\ref{lemma:short_shortcut}. Explicitly, for any fixed $k>9\delta$ we can 
repeat the argument above with $C'$ replaced by $C'k/9\delta$, and deduce
that a geodesic in the 1-skeleton of (the universal cover of) 
$S$ is mapped to a $k$-local geodesic in (the universal cover of) $K$. 
The theorem follows.
\end{proof}

\begin{remark}
Since we may take $k>9\delta$ arbitrarily large, the estimate in the proof of
Lemma~\ref{lemma:short_shortcut} actually shows that beaded surfaces can be taken to be 
$(1+\epsilon)$-quasiconvex for any fixed $\epsilon>0$.
\end{remark}

\begin{remark}\label{remark:homologically_essential}
The surfaces we build are homologically trivial, since they map nontrivially over only one
disk bounded by a relator $r$, and with total degree $0$. As remarked in the introduction,
because the surfaces produced by the Thin Fatgraph Theorem are {\em orientable}, a modification
of our construction produces homologically essential surfaces.

If $n$ is even, a random reduced word of length $n$
in a free group of rank $k$ is homologically trivial with probability $O(n^{-k/2})$. Since there
are $(2k-1)^{nD}$ relators, there are an enormous number of such homologically trivial relators, and
we can try to build a surface mapping over the associated disk with degree $1$ (and therefore being
homologically essential in $G$). Evidently, the only obstruction to finding such surfaces is to build
a bead decomposition as in Lemma~\ref{lemma:bead_decomposition} where all the $B_i$ are
homologically trivial, while still preserving the property that correlations between
distinct $B_i$ decay exponentially fast. The probability that the naive construction of a bead
decomposition (as in the lemma) applied to a random word will have this property is 
$(n^{-k/2})^{n^\delta}$, which is subexponential in $n$, so many of the relators will have this
property, and we can build many homologically essential surfaces of genus $O(n)$. If $n$ is odd
we can build a similar (homologically essential) beaded surface from two (judiciously chosen) relators.
\end{remark}

\begin{remark}\label{remark:Gromov_norm}
The surfaces we build have genus $O(n)$ (or $o(n)$ for rank 2), 
and it is natural to wonder if this is the best possible. 
We conjecture not; in fact we conjecture that
the smallest genus injective surfaces in random groups are of genus
$O(n/\log{n})$ (at any density $D<1/2$).

In fact, Thm.~4.16 of \cite{Calegari_Walker_RR} gives a
precise estimate of the geometry of the {\em Gromov norm} on $H_2(G;\R)$.
Let $V$ be the vector space with the relators $r_i$ as basis, and let
$W$ be the kernel of the natural map $V \to H_1(F_k;\R)$. Then we can identify
$W$ with $H_2(G;\R)$, by Mayer--Vietoris. 
The vector space $W$ inherits an $L^1$ norm from $V$
with respect to its given basis. The Gromov norm on $W$ (on random subspaces
of fixed dimension) is (with overwhelming probability) proportional
to this $L^1$ norm, with constant of proportionality $2\log(2k-1)n/3\log(n)$,
up to a multiplicative error of size $1+o(1)$ (there is a factor of 4 relative
to the statement of Thm.~4.16 of \cite{Calegari_Walker_RR}; this factor of 4
reflects the difference between the Gromov norm and the so-called {\em scl norm}).
It seems plausible that classes in $H_2(G;\Q)$ should be projectively 
represented by norm-minimizing surfaces; such surfaces will necessarily be 
injective. Again, it seems likely that one should not need to pass to a very
big multiple of a class to find an extremal surface (at least for some classes);
so there should be injective surfaces of genus $O(n/\log(n))$. We strongly
suspect this order of magnitude is sharp.
\end{remark}

\end{document}